\newtheorem{thm}{Theorem}
\newtheorem{lemma}[thm]{Lemma}
\newtheorem{cor}[thm]{Corollary}
\newtheorem{prop}[thm]{Proposition}
\newtheorem{defn}{Definition}
\newcommand*\colvec[1]{\begin{pmatrix}#1\end{pmatrix}}
\begin{document}

\author{Oliver J. D. Barrowclough}
\email{oliver.barrowclough@sintef.no}
\address{SINTEF ICT, Applied Mathematics, P.O. Box 124, Blindern, 0314 Oslo, Norway}
\title{A basis for the implicit representation of planar rational cubic B\'ezier curves}

\begin{abstract}
We present an approach to finding the implicit equation of a planar rational parametric cubic curve,
by defining a new basis for the representation. The basis, which contains only four cubic bivariate 
polynomials, is defined in terms of the B\'ezier control points of the curve. An explicit formula for 
the coefficients of the implicit curve is given. Moreover, these coefficients lead to simple expressions 
which describe aspects of the geometric behaviour of the curve. In particular, we present an explicit 
barycentric formula for the position of the double point, in terms of the B\'ezier control points 
of the curve. We also give conditions for when an unwanted singularity occurs in the region of interest. 
Special cases in which the method fails, such as when three of the control points are collinear, or when 
two points coincide, will be discussed separately.
\end{abstract}

\keywords{B\'ezier curves, rational cubic curves, conic sections, implicit representation, singularity, double point}

\maketitle

\section{Introduction}\label{sec:intro}

Parametric curves are widely used in CAGD applications, especially in the ubiquitous rational B\'ezier and B-spline forms, due to their natural
geometric properties. It is well known that all planar rational parametric curves can be written in implicit 
form \cite{sederberg_1995}; that is, as the zero set of a single bivariate polynomial function. The availability of both the implicit and parametric 
representations, which have properties complementary to each other, is important for various applications in CAGD, such as intersection and surface 
trimming algorithms. The increase in the presence of GPUs in commodity computers has also led to renewed interest in implicit representations for 
rendering applications \cite{blinn_2005,pfeifle_2012}. Since traditionally the design phase happens using the parametric representation, a great deal 
of research has focussed on \emph{implicitization} - the conversion from the rational parametric, to the implicit form. 

Implicitization algorithms, both exact and approximate, have been investigated by many authors (e.g.,
\cite{barrowclough_2012,buse_2010,chen_2002,corless_2001,dokken_2001,sederberg_1995}). The specific case of implicitization of 
\emph{planar rational cubic curves} has also received particular attention. In \cite{sederberg_1985}, Sederberg et al.
present a method which reduces the degrees of freedom in the implicit polynomial from nine to eight by introducing 
monoid curves. In \cite{floater_1995}, Floater makes a choice of basis that allows the implicit representation to be given in terms 
of only six basis functions - a reduction from the 10 basis functions required to represent all polynomials of total degree 
three. Floater also gives explicit formulas for the coefficients and conditions for when the curve degenerates to a conic. 

Despite being motivated by methods for sparse implicitization \cite{emiris_2005}, the techniques in this paper more closely 
resemble those of Sederberg et al. \cite{sederberg_1985,sederberg_1995} and Floater \cite{floater_1995}. The method describes how, 
in most cases, the implicit form of the curve can be defined in terms of only four basis functions. The basis functions are 
constructed from the control points of a rational cubic curve given in Bernstein-B\'ezier form. In addition, the coefficients 
which describe the implicit representation are shown to contain a lot of information about the geometry of the curve. We present 
an explicit barycentric formula for the position of the double point of the curve in terms of its control points, along with criteria 
for when the double point is considered `unwanted'. The important case of degeneration to conic sections is also treated. In the 
special case when three of the control points are collinear, the method fails; potential remedies for this will also be discussed.

Although much of the paper will utilize a Cartesian system for describing and proving the results, the method is in fact independent 
of the coordinate system, and is stated in terms of purely geometric quantities. In addition, all of the formulas (except Eq. 
(\ref{eq:parasols})), including the detection and location of singularities on the curve, can be implemented in exact rational 
arithmetic; that is, only the operations of addition, subtraction multiplication and division are required. This contrasts with 
other methods, which often require polynomial rootfinding to find the double point via the parametric representation 
\cite{floater_1995,blinn_2005,pfeifle_2012,thomassen_2005}. This is potentially useful in applications which require exact precision 
and also aids the speed of floating point implementations. 

There are several applications of implicit representations of rational cubic curves. These include resolution independent curve rendering,
as in \cite{blinn_2005,pfeifle_2012}, and intersection algorithms, as in \cite{sederberg_1985,sederberg_1986,thomassen_2005}. We also
envisage great potential for the use of the techniques in this paper in surface trimming algorithms. A piecewise implicit
representation of cubic trimming curves in the parameter domain will give a simple and accurate test for whether a point lies 
inside or outside the trimming region. The geometric formulas presented in this paper are also interesting from a theoretical perspective.

The paper will proceed as follows. In Section \ref{sec:thebasis} we present the construction of the basis we use for the cubic 
implicitization, and also present formulas for the coefficients which define the curve. Computation of the double point of the curve will 
be addressed in Section \ref{sec:double}, and Section \ref{sec:conics} will cover the case of cubics which degenerate to conic sections. 
The special case of collinear control points will be discussed in Section \ref{sec:collinear}. Several examples will be presented in Section 
\ref{sec:examples}, which highlight the simplicity of the method.  We conclude the paper with a discussion of the extension to higher 
degrees and further work in Section \ref{sec:conc}. Some extended proofs and basic geometric properties are deferred to the Appendices.

\section{A basis for representing rational cubic B\'ezier curves implicitly}\label{sec:thebasis}

In this section we describe the implicit basis functions and prove some simple facts such as invariance of the coefficients under 
affine transformations. We begin with some definitions.

\subsection{Preliminaries} 

We assume that the planar rational cubic curve is given in Bernstein-B\'ezier form with control points 
$\mathbf{c}_0,\mathbf{c}_1,\mathbf{c}_2,\mathbf{c}_3,$ and weights $w_0,w_1,w_2,w_3.$ That is, we have 
\begin{equation}\label{eq:cubbez}
\mathbf{p}(t) = \frac{\mathbf{c}_0 w_0(1-t)^3 + \mathbf{c}_1 3 w_1(1-t)^2 t + \mathbf{c}_2 3 w_2(1-t)t^2 + \mathbf{c}_3 w_3t^3}
                     {w_0(1-t)^3 + 3w_1(1-t)^2 t + 3w_2(1-t)t^2 + w_3t^3}.
\end{equation}
In the parametric form, rational B\'ezier curves are normally rendered within a region of interest corresponding to the parameter values
$t\in[0,1].$ It is also common in the CAGD community to require that the weights are strictly positive. Although such a restriction is 
easier for us to work with, for the most part, it is not necessary; most of the methods we present also work with negative weights, zero weights 
and weights of mixed sign. However, due to the construction which follows, we do require that \emph{no three of the control points are 
collinear}. It is important to note that this will be an assumption of all the results up to Section \ref{sec:collinear}.

In the following definition, we assume the control points are given in a Cartesian system, $\mathbf{c}_{i} = (c_{i,0},c_{i,1}).$ 
\begin{defn}\label{def:L}
We define the implicit equation of the line between $\mathbf{c}_i$ and $\mathbf{c}_j$ to be given by $L_{ij}(x,y)=0,$ where
\[
L_{ij}(x,y) = 
\begin{vmatrix}
      x &       y & 1 \\
c_{i,0} & c_{i,1} & 1 \\
c_{j,0} & c_{j,1} & 1
\end{vmatrix}.
\]
\end{defn}
We may refer loosely to `the line $L_{ij},$' meaning `the line defined by the equation $L_{ij}(x,y)=0$'. Note that the norm of the gradient 
of $L_{ij}$ is equal to the Euclidean distance between the points $\mathbf{c}_i$ and $\mathbf{c}_j:$ \footnote{The linear functions $L_{ij},$ 
can in fact be characterised by the three conditions of vanishing at $\mathbf{c}_i$ and $\mathbf{c}_j,$ and having constant gradient, 
proportional to the Euclidean distance between the points. However, for the sake of clarity, we proceed using the definition in the Cartesian 
system.}
\[
\Vert \nabla L_{ij} \Vert_2 = \Vert \mathbf{c}_j - \mathbf{c}_i \Vert_2. 
\]

\begin{defn}\label{def:lambda}
We define a quantity $\lambda_{ijk}$  as follows:
\[
\lambda_{ijk} = 
\begin{vmatrix}
c_{i,0} & c_{i,1} & 1 \\
c_{j,0} & c_{j,1} & 1 \\
c_{k,0} & c_{k,1} & 1
\end{vmatrix}.
\]
For compactness of notation we define $\lambda_i = (-1)^{i+1}\lambda_{i+1,i+2,i+3}$ where the indices $i+1,$ $i+2$ and $i+3$ are taken 
modulo 4. That is, 
\begin{equation}
\lambda_0 = \lambda_{321}, \quad \lambda_1 = \lambda_{230}, \quad \lambda_2 = \lambda_{103}, \quad \lambda_3 = \lambda_{012}.
\end{equation}
We also make the definition 
\[
u_i = \binom{3}{i} w_i,
\]
for each of the weights $(w_i)_{i=0}^3$ of the rational cubic B\'ezier curve.
\end{defn}

Clearly $\lambda_{ijk}=0$ if any of the $i,j,k$'s are equal. 
The quantities $\lambda_i$ represent twice the signed areas of the triangles formed from the control points $(\mathbf{c}_i)_{i=0}^3,$ by omitting 
the point $\mathbf{c}_i$ (i.e., $\mathbf{c}_{i+1}, \mathbf{c}_{i+2}$ and $\mathbf{c}_{i+3}$). The areas, $\lambda_i,$ are pictured with the 
corresponding weights, $u_i,$ in Figure \ref{fig:coefs}.

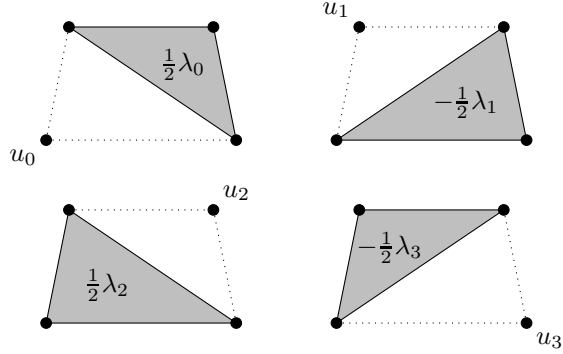
\begin{figure}
\begin{center}
\begin{tikzpicture}[scale=1.0]
\path (0,0) coordinate (A);
\path (0.3,1.5) coordinate (B);
\path (2.2,1.5) coordinate (C);
\path (2.5,0) coordinate (D);
\draw[fill=gray!50] (D) -- (B) -- (C) -- (D);
\draw[dotted] (A) -- (B);
\draw[dotted] (A) -- (D);
\draw[fill] (A) circle (0.07);
\draw[fill] (B) circle (0.07);
\draw[fill] (C) circle (0.07);
\draw[fill] (D) circle (0.07);
\draw (A) node[below left] {$u_0$};
\draw[transparent] (B) node[above left] {$u_1$};
\draw[transparent] (C) node[above right] {$u_2$};
\draw[transparent] (D) node[below right] {$u_3$};
\draw (1.8,1) node[] {$\frac{1}{2}\lambda_0$};
\end{tikzpicture} 
\begin{tikzpicture}[scale=1.0]
\draw[fill=gray!50] (A) -- (C) -- (D) -- (A);
\draw[dotted] (A) -- (B);
\draw[dotted] (B) -- (C);
\draw[fill] (A) circle (0.07);
\draw[fill] (B) circle (0.07);
\draw[fill] (C) circle (0.07);
\draw[fill] (D) circle (0.07);
\draw[transparent] (A) node[below left] {$u_0$};
\draw (B) node[above left] {$u_1$};
\draw[transparent] (C) node[above right] {$u_2$};
\draw[transparent] (D) node[below right] {$u_3$};
\draw (1.7,0.5) node[] {$-\frac{1}{2}\lambda_1$};
\end{tikzpicture} \\
\begin{tikzpicture}[scale=1.0]
\draw[fill=gray!50] (A) -- (B) -- (D) -- (A);
\draw[dotted] (B) -- (C);
\draw[dotted] (C) -- (D);
\draw[fill] (A) circle (0.07);
\draw[fill] (B) circle (0.07);
\draw[fill] (C) circle (0.07);
\draw[fill] (D) circle (0.07);
\draw[transparent] (A) node[below left] {$u_0$};
\draw[transparent] (B) node[above left] {$u_1$};
\draw (C) node[above right] {$u_2$};
\draw[transparent] (D) node[below right] {$u_3$};
\draw (0.8,0.5) node[] {$\frac{1}{2}\lambda_2$};
\end{tikzpicture} 
\begin{tikzpicture}[scale=1.0]
\draw[fill=gray!50] (A) -- (B) -- (C) -- (A);
\draw[dotted] (C) -- (D);
\draw[dotted] (A) -- (D);
\draw[fill] (A) circle (0.07);
\draw[fill] (B) circle (0.07);
\draw[fill] (C) circle (0.07);
\draw[fill] (D) circle (0.07);
\draw[transparent] (A) node[below left] {$u_0$};
\draw[transparent] (B) node[above left] {$u_1$};
\draw[transparent] (C) node[above right] {$u_2$};
\draw (D) node[below right] {$u_3$};
\draw (0.7,1) node[] {$-\frac{1}{2}\lambda_3$};
\end{tikzpicture}
\caption{The definition of the coefficients $\lambda_i$ corresponds to twice the signed areas of the shaded regions. 
The corresponding weights $u_i$ appear on the opposite vertex.}
\label{fig:coefs}
\end{center}
\end{figure}

\subsection{Implicit basis functions for rational cubic curves}

The following definition describes the basis functions we use for the implicit representation:
\begin{defn}
We define four basis functions as follows:
\begin{eqnarray*}
K_0(x,y) & = & L_{01}(x,y) L_{12}(x,y) L_{23}(x,y), \\
K_1(x,y) & = & L_{01}(x,y) L_{13}(x,y)^2, \\
K_2(x,y) & = & L_{02}(x,y)^2 L_{23}(x,y), \\
K_3(x,y) & = & L_{03}(x,y)^3.  
\end{eqnarray*}
\end{defn}
A diagrammatic representation of these basis functions is shown in Figure \ref{fig:fns}. To gain some intuition on the construction 
of these basis functions, an extended discussion is presented in \ref{sec:construct}. 

The term `basis functions' is used rather loosely in this context, since for any different collection of control points the `basis' will be different. 
The function set provides a basis for any non-degenerate planar rational cubic B\'ezier curve with given control points, subject to no three points being collinear; 
that is, for any choice of weight combinations. Moreover, the basis functions are conceptually similar for all non-collinear collections of control points, although they do differ analytically. 
The proof of linear independence given in Theorem \ref{thm:linindep}, together with the spanning property of Theorem \ref{thm:main}, 
justifies the choice of the term `basis' for this function set. 
However, the fact that the functions are dependent on the control points must be understood.

In the following Theorem we establish an explicit formula for the implicit representation of the rational cubic B\'ezier curve $\mathbf{p}(t),$ in terms of these basis functions. 

\begin{thm}\label{thm:main}
Suppose we are given a non-degenerate rational cubic B\'ezier curve, $\mathbf{p}(t),$ such that no three of the control points are collinear. Then 
the curve has the following equation defining its implicit representation:
\[
q(x,y) = \sum_{i=0}^{3} b_i K_i(x,y)
\]
where
\begin{equation}\label{eq:coefs}
\begin{split}
b_0 & = -(\lambda_1^2 \lambda_2^2 U - u_1^2 u_2^2 \Lambda), \\
b_1 & = \lambda_1^3 \lambda_3 U    - u_1^3 u_3 \Lambda, \\
b_2 & = \lambda_0 \lambda_2^3 U    - u_0 u_2^3 \Lambda, \\
b_3 & = \lambda_0^2 \lambda_3^2 U  - u_0^2 u_3^2 \Lambda.
\end{split}
\end{equation}
and $U = \prod_{k=0}^3 u_i$ and $\Lambda = \prod_{k=0}^3 \lambda_i.$

\end{thm}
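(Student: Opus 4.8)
The plan is to prove the identity by direct substitution: I will show that $q(\mathbf p(t))\equiv 0$, so that the parametric curve lies on the zero set of $q$, and then argue that $q$ is, up to a scalar, the unique defining polynomial of the curve. Write the parametrization (\ref{eq:cubbez}) as $\mathbf p(t)=\mathbf N(t)/\omega(t)$, where $\beta_k(t)=(1-t)^{3-k}t^k$, the denominator is $\omega(t)=\sum_{k=0}^{3}u_k\beta_k(t)$, and $\mathbf N(t)=\sum_{k=0}^{3}u_k\beta_k(t)\,\mathbf c_k$. Because each $K_i$ is a product of three of the linear forms $L_{ij}$, everything reduces to a single evaluation $L_{ij}(\mathbf p(t))$, multiplied out three at a time.

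The key step is the evaluation of the linear forms along the curve. Since $L_{ij}$ is affine, substituting $\mathbf p(t)$ and clearing $\omega(t)$ gives, by multilinearity of the determinant in its first row,
\[
\omega(t)\,L_{ij}(\mathbf p(t))=\sum_{k=0}^{3}u_k\beta_k(t)\,L_{ij}(\mathbf c_k)=\sum_{k=0}^{3}u_k\lambda_{kij}\,\beta_k(t).
\]
The decisive simplification is that $L_{ij}(\mathbf c_i)=L_{ij}(\mathbf c_j)=0$, so $\lambda_{kij}=0$ whenever $k\in\{i,j\}$ and only the two terms with $k\notin\{i,j\}$ survive; thus $\omega\,L_{ij}(\mathbf p)$ is a binomial in the $\beta_k$. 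Using the antisymmetry of $\lambda_{kij}$ under permutation of its indices, each surviving coefficient can be written as $\pm\lambda_m$, so every $\omega\,L_{ij}(\mathbf p)$ becomes an explicit binomial whose coefficients are products $u_k\lambda_m$.

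Multiplying three such binomials, and using $\beta_i\beta_j\beta_k=(1-t)^{9-(i+j+k)}t^{\,i+j+k}$, shows that each $\omega(t)^3K_i(\mathbf p(t))$ is a polynomial in $t$ of degree $9$ supported on the monomials $(1-t)^{9-m}t^m$. Setting $s=t/(1-t)$ and factoring out the common $(1-t)^9s^2$, the whole numerator reduces to $\omega^3q(\mathbf p)=(1-t)^9s^2\sum_{i=0}^{3}b_iP_i(s)$, where each $P_i$ is an explicit polynomial in $s$ of degree at most $5$; hence the theorem is equivalent to the polynomial identity $\sum_{i=0}^{3}b_iP_i(s)\equiv 0$. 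I would verify this by comparing the coefficients of $s^0,\dots,s^5$. Writing $b_i=\mu_iU-\mu_i'\Lambda$, where $\mu_i$ is the $\lambda$-monomial read off from (\ref{eq:coefs}) and $\mu_i'$ is the same monomial with each $\lambda_j$ replaced by $u_j$, every coefficient splits into a $U$-part and a $\Lambda$-part. The content of the theorem is that these two parts cancel against each other for every power of $s$ (within each part the bulk of the terms cancel, and the surviving $U$- and $\Lambda$-contributions are negatives of one another); this is exactly what forces the particular coefficients $b_i$. This coefficient-matching computation, together with the sign bookkeeping coming from the antisymmetry relations among the $\lambda_{kij}$, is the main obstacle, although it is elementary once the $u\leftrightarrow\lambda$ symmetry is exploited.

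It remains to see that $q=0$ is the implicit equation rather than merely a curve containing $\mathbf p$. Since no three control points are collinear, every $\lambda_i\neq0$, and one checks from (\ref{eq:coefs}) that, for a non-degenerate cubic, the $b_i$ do not all vanish; by the linear independence of the $K_i$ proved in Theorem~\ref{thm:linindep}, this means $q\not\equiv0$. As $\mathbf p$ is a proper parametrization of an irreducible plane cubic $C$, any nonzero polynomial of degree at most $3$ that vanishes on $C$ must, by Bézout, have degree exactly $3$ and be a scalar multiple of the defining polynomial of $C$. Hence $q=0$ is the implicit representation of the curve, as claimed.
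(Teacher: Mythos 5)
Your proposal follows essentially the same route as the paper's own proof: evaluate each $L_{ij}$ along the curve by multilinearity so that only two terms survive, conclude that each $w^3K_i(\mathbf{p}(t))$ is divisible by $t^2(1-t)^2$, and reduce the theorem to a degree-five coefficient-matching identity, with linear independence (Theorem~\ref{thm:linindep}) and non-degeneracy guaranteeing $q\not\equiv 0$; your substitution $s=t/(1-t)$ is just a dehomogenized version of the paper's Bernstein-basis bookkeeping. Your closing B\'ezout/irreducibility argument makes explicit a step the paper leaves implicit, and both you and the paper leave the bulk of the final coefficient cancellation as a (verifiable) computation.
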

Due to its length, we defer the proof of this theorem to \ref{sec:proof}. It is interesting to note that the degrees to which the 
$u_i$s and $\lambda_i$s appear in the coefficient formula (\ref{eq:coefs}), are closely related to the multiplicities of the basis functions
$(K_i)_{i=0}^3$ at the vertices $(\mathbf{c}_k)_{k=0}^3.$

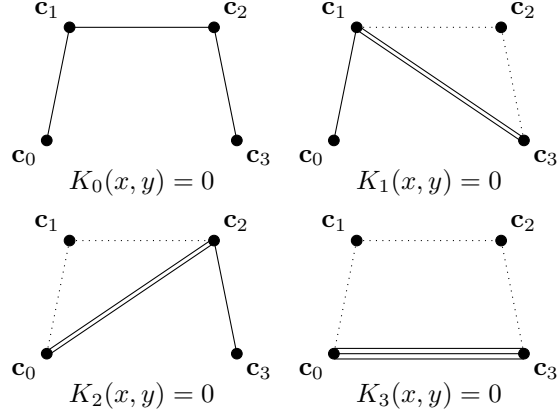
\begin{figure}
\begin{center}
\begin{tikzpicture}[scale=1.0]
\draw (A) -- (B);
\draw (B) -- (C);
\draw (C) -- (D);
\draw[fill] (A) circle (0.07);
\draw[fill] (B) circle (0.07);
\draw[fill] (C) circle (0.07);
\draw[fill] (D) circle (0.07);
\draw (A) node[below left] {$\mathbf{c}_0$};
\draw (B) node[above left] {$\mathbf{c}_1$};
\draw (C) node[above right] {$\mathbf{c}_2$};
\draw (D) node[below right] {$\mathbf{c}_3$};
\draw (1.25,-0.25) node[below] {$K_0(x,y)=0$};
\end{tikzpicture} 
\begin{tikzpicture}[scale=1.0]
\draw (A) -- (B);
\draw (0.32,1.52) -- (2.52,0.02);
\draw (0.28,1.48) -- (2.48,-0.02);
\draw[dotted] (B) -- (C);
\draw[dotted] (C) -- (D);
\draw[fill] (A) circle (0.07);
\draw[fill] (B) circle (0.07);
\draw[fill] (C) circle (0.07);
\draw[fill] (D) circle (0.07);
\draw (A) node[below left] {$\mathbf{c}_0$};
\draw (B) node[above left] {$\mathbf{c}_1$};
\draw (C) node[above right] {$\mathbf{c}_2$};
\draw (D) node[below right] {$\mathbf{c}_3$};
\draw (1.25,-0.25) node[below] {$K_1(x,y)=0$};
\end{tikzpicture} \\
\begin{tikzpicture}[scale=1.0]
\draw (0.02,-0.02) -- (2.22,1.48);
\draw (-0.02,0.02) -- (2.18,1.52);
\draw (C) -- (D);
\draw[dotted] (A) -- (B);
\draw[dotted] (B) -- (C);
\draw[fill] (A) circle (0.07);
\draw[fill] (B) circle (0.07);
\draw[fill] (C) circle (0.07);
\draw[fill] (D) circle (0.07);
\draw (A) node[below left] {$\mathbf{c}_0$};
\draw (B) node[above left] {$\mathbf{c}_1$};
\draw (C) node[above right] {$\mathbf{c}_2$};
\draw (D) node[below right] {$\mathbf{c}_3$};
\draw (1.25,-0.25) node[below] {$K_2(x,y)=0$};
\end{tikzpicture} 
\begin{tikzpicture}[scale=1.0]
\draw (0,-0.07) -- (2.5,-0.07);
\draw (A) -- (D);
\draw (0,0.07) -- (2.5,0.07);
\draw[dotted] (A) -- (B);
\draw[dotted] (B) -- (C);
\draw[dotted] (C) -- (D);
\draw[fill] (A) circle (0.07);
\draw[fill] (B) circle (0.07);
\draw[fill] (C) circle (0.07);
\draw[fill] (D) circle (0.07);
\draw (A) node[below left] {$\mathbf{c}_0$};
\draw (B) node[above left] {$\mathbf{c}_1$};
\draw (C) node[above right] {$\mathbf{c}_2$};
\draw (D) node[below right] {$\mathbf{c}_3$};
\draw (1.25,-0.25) node[below] {$K_3(x,y)=0$};
\end{tikzpicture} \\
\caption{A diagrammatic representation of the zero sets of the basis functions $(K_i)_{i=0}^3.$ The number of lines 
between any two points $\mathbf{c}_i$ and $\mathbf{c}_j,$ reflect the multiplicity with which $L_{ij}(x,y)$ appears 
in the basis function.}
\label{fig:fns}
\end{center}
\end{figure}

\begin{prop}
The coefficients $(b_i)_{i=0}^3$ defined by (\ref{eq:coefs}) are invariant under affine transformations, up to a constant scaling.
\end{prop}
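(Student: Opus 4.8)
The plan is to track how each ingredient of formula (\ref{eq:coefs}) transforms under an affine map $T(\mathbf{x}) = A\mathbf{x} + \mathbf{b}$, with $A$ an invertible $2\times 2$ matrix, and then to exploit the fact that every coefficient $b_i$ is built homogeneously from these ingredients. First I would recall the affine invariance of the rational B\'ezier representation: since $\mathbf{p}(t)$ is an affine combination of its control points (the rational blending functions $w_i B_i(t)/\sum_j w_j B_j(t)$ summing to one), applying $T$ to the curve amounts to applying $T$ to the control points while leaving the weights $w_i$ untouched. Consequently the quantities $u_i = \binom{3}{i}w_i$, and hence $U = \prod_k u_k$, are invariant.

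Next I would analyse the geometric quantities $\lambda_i$. Each $\lambda_{ijk}$ is twice the signed area of the triangle on $\mathbf{c}_i,\mathbf{c}_j,\mathbf{c}_k$, and signed area scales by the Jacobian $\det A$ under $T$ (the translation $\mathbf{b}$ cancels, as only vertex differences enter). Hence every $\lambda_i \mapsto (\det A)\,\lambda_i$, and therefore $\Lambda = \prod_k \lambda_k \mapsto (\det A)^4\,\Lambda$, while $U$ is fixed.

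The decisive structural point is that each coefficient in (\ref{eq:coefs}) is homogeneous of total degree $4$ in the $\lambda$'s, counting $\Lambda$ with $\lambda$-degree $4$. In every $b_i$ the first monomial carries exactly four factors among the $\lambda_i$ multiplied by the $\lambda$-free quantity $U$, while the second monomial carries the single factor $\Lambda$ (itself of $\lambda$-degree $4$) multiplied by $u$-quantities alone; I would verify this count term by term for $b_0,\dots,b_3$. Given this uniformity, both monomials of each $b_i$ acquire the same factor $(\det A)^4$, so $b_i \mapsto (\det A)^4\,b_i$ for every $i$, and the vector $(b_i)_{i=0}^3$ is simply multiplied by the single scalar $(\det A)^4$, independent of $i$ --- which is the claimed invariance up to a common constant scaling.

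I do not expect a genuine obstacle: the argument is essentially a bookkeeping of scaling exponents. The two points deserving care are, first, confirming that the weights (and so the $u_i$) are truly unaffected by $T$, which rests on the affine-combination structure above; and second, checking that the $\lambda$-homogeneity really is uniform across all four coefficients, since it is precisely this uniformity that forces a single common scaling constant rather than four possibly different ones.
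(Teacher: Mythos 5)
Your proposal is correct and follows essentially the same route as the paper's own proof: weights (hence the $u_i$ and $U$) are unchanged by the affine map, the signed areas $\lambda_i$ all scale by the same nonzero constant (which you identify explicitly as $\det A$), and the degree-four $\lambda$-homogeneity of each term in (\ref{eq:coefs}) forces the common factor $(\det A)^4$ on every $b_i$. Your term-by-term verification of the homogeneity is just a slightly more explicit rendering of the paper's one-line appeal to it.
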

\begin{proof}
Formally, suppose we are given a rational cubic B\'ezier curve $\mathbf{p}(t)$ with control points $\mathbf{c}_i$ and weights $w_i,$ whose implicit
coefficients $(b_i)_{i=0}^3$ are defined by (\ref{eq:coefs}). Then, for any affine map $\Phi,$ we claim that the implicit coefficients of 
the transformed curve $\Phi(\mathbf{p}(t))$ are given by $(\alpha b_i)_{i=0}^3,$ for some non-zero constant $\alpha.$
Since affine transformations multiply areas by a non-zero constant, there exists a constant $C$ such that $\tilde{\lambda}_i = C\lambda_i$ where 
$\tilde{\lambda}_i$ are the areas defined by control points $\tilde{\mathbf{c}}_i,$ after the affine transformation (i.e., 
$\tilde{\mathbf{c}}_i = \Phi \mathbf{c}_i$, for each $i=0,1,2,3$). Now, since the weights $u_i$ are unchanged, and the $\lambda_i$s appear 
homogeneously of degree four in the definition (\ref{eq:coefs}), we have $\tilde{b}_i = C^4 b_i,$ for each $i.$
\end{proof}
Since implicit representations are unchanged by non-zero scalar multiplication, we can clearly factor out the constant $C^4.$

\subsection{Evaluating the implicit equation}

One potential disadvantage of the method described in this paper, is that when the implicit equation is evaluated, samples are taken at the six different
lines in the set
\[
\{L_{ij}(x,y) : 0\leq i < j \leq 3\}.
\]
This can, however, be reduced to three evaluations by some simple relations between the lines. 

\begin{prop}\label{prop:l0l1l2}
Suppose we are given any four points $(\mathbf{c}_{i})_{i=0}^3,$ with no three collinear. Then, using Definitions \ref{def:L} and \ref{def:lambda}, we can write
\begin{equation}\label{eq:otherlines}
\lambda_{j}L_{ij}(x,y) + \lambda_{k}L_{ik}(x,y) + \lambda_{l}L_{il}(x,y) \equiv 0,
\end{equation}
for any choice of $i,j,k,l\in\{0,1,2,3\},$ with all indices distinct.
\end{prop}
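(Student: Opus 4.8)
The plan is to lift everything to homogeneous coordinates and reduce the whole identity to the single fact that four vectors in a three-dimensional space must be linearly dependent. I would write $\hat{\mathbf{c}}_m = (c_{m,0},c_{m,1},1)$ for the homogeneous lift of each control point and $\hat{\mathbf{p}} = (x,y,1)$ for the variable point. With this notation Definition \ref{def:L} reads $L_{ij}(x,y) = \det(\hat{\mathbf{p}},\hat{\mathbf{c}}_i,\hat{\mathbf{c}}_j)$, the determinant of the $3\times 3$ matrix whose rows are the three listed vectors, while Definition \ref{def:lambda} gives $\lambda_{ijk} = \det(\hat{\mathbf{c}}_i,\hat{\mathbf{c}}_j,\hat{\mathbf{c}}_k)$. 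The key step I would isolate first is the single vector identity
\[
\lambda_0 \hat{\mathbf{c}}_0 + \lambda_1 \hat{\mathbf{c}}_1 + \lambda_2 \hat{\mathbf{c}}_2 + \lambda_3 \hat{\mathbf{c}}_3 = \mathbf{0},
\]
which simultaneously encodes the barycentric relation $\sum_m \lambda_m \mathbf{c}_m = \mathbf{0}$ (first two coordinates) and the area relation $\sum_m \lambda_m = 0$ (third coordinate).

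To prove this vector identity I would use the standard fact that the coefficients of a linear dependence among $n+1$ vectors in $\mathbb{R}^n$ are, up to sign, their maximal minors. Concretely, for each coordinate $r\in\{1,2,3\}$ I would form the $4\times 4$ matrix whose first row lists the $r$-th coordinates of $\hat{\mathbf{c}}_0,\dots,\hat{\mathbf{c}}_3$ and whose remaining three rows carry the three coordinates of those same vectors; this matrix has a repeated row and hence vanishing determinant, and cofactor expansion along the first row yields $\sum_m (-1)^m D_m (\hat{\mathbf{c}}_m)_r = 0$, where $D_m$ is the minor obtained by deleting $\hat{\mathbf{c}}_m$. The one genuinely fiddly point, and the place I would slow down, is matching $(-1)^m D_m$ with $\lambda_m$: unwinding Definition \ref{def:lambda}, the minor $D_m = \det(\hat{\mathbf{c}}_{m+1},\hat{\mathbf{c}}_{m+2},\hat{\mathbf{c}}_{m+3})$ equals $(-1)^{m+1}\lambda_m$ once the cyclic-versus-increasing ordering of the indices is accounted for, so that $(-1)^m D_m = -\lambda_m$ and the cofactor relation becomes exactly the displayed identity.

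With the vector identity in hand the proposition is immediate. Since all indices are distinct and drawn from a four-element set, $\{j,k,l\}$ is precisely the complement of $\{i\}$, whence $\lambda_j\hat{\mathbf{c}}_j + \lambda_k\hat{\mathbf{c}}_k + \lambda_l\hat{\mathbf{c}}_l = -\lambda_i\hat{\mathbf{c}}_i$. Using multilinearity of the determinant in its third row,
\[
\lambda_j L_{ij} + \lambda_k L_{ik} + \lambda_l L_{il} = \det\bigl(\hat{\mathbf{p}},\,\hat{\mathbf{c}}_i,\ \lambda_j\hat{\mathbf{c}}_j+\lambda_k\hat{\mathbf{c}}_k+\lambda_l\hat{\mathbf{c}}_l\bigr) = -\lambda_i\det(\hat{\mathbf{p}},\hat{\mathbf{c}}_i,\hat{\mathbf{c}}_i),
\]
and the final determinant vanishes because two of its rows coincide. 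As an independent sanity check I would also evaluate the left-hand affine function at the three (non-collinear) points $\mathbf{c}_i,\mathbf{c}_j,\mathbf{c}_k$, using $L_{ab}(\mathbf{c}_c)=\lambda_{cab}$ together with the antisymmetry of $\lambda_{abc}$ to see it vanishes at each; agreement at three affinely independent points forces an affine function to be identically zero. The main obstacle here is purely the sign bookkeeping in identifying the dependence coefficients with the $\lambda_m$, since the structural content is just one application of multilinearity and the vanishing of a determinant with a repeated row.
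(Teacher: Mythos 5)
Your proof is correct, but it takes a genuinely different route from the paper. The paper's argument is the one you relegate to a final sanity check: it sets $f = \lambda_j L_{ij} + \lambda_k L_{ik} + \lambda_l L_{il}$, observes $L_{ij}(\mathbf{c}_j)=0$ and that the two surviving terms in $f(\mathbf{c}_j)$ are $\pm\lambda_k\lambda_l$ with opposite (orientation-dependent) signs, hence $f(\mathbf{c}_j)=f(\mathbf{c}_k)=f(\mathbf{c}_l)=0$, and concludes that an affine function vanishing at three non-collinear points is identically zero --- this is precisely where the proposition's hypothesis enters. Your main argument instead establishes the stronger vector identity $\lambda_0\hat{\mathbf{c}}_0+\lambda_1\hat{\mathbf{c}}_1+\lambda_2\hat{\mathbf{c}}_2+\lambda_3\hat{\mathbf{c}}_3=\mathbf{0}$ for the homogeneous lifts, via cofactor expansion of a singular $4\times4$ matrix, and then finishes by multilinearity of the determinant in its third row; your sign bookkeeping checks out ($D_m=(-1)^{m+1}\lambda_m$ because a cyclic shift of three indices is an even permutation, so $(-1)^mD_m=-\lambda_m$ and the cofactor relation is the displayed identity). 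Your route buys two things. First, it proves (\ref{eq:otherlines}) for \emph{arbitrary} quadruples of points: the non-collinearity hypothesis is needed only by the paper's method of proof, not by the identity itself. Second, your key lemma is reusable within the paper: its third coordinate is exactly Proposition \ref{prop:lambdapmpm} of the Appendix ($\lambda_0+\lambda_1+\lambda_2+\lambda_3=0$), and the full vector identity is what underlies the barycentric intersection points $\mathbf{m}_1,\mathbf{m}_2,\mathbf{m}_3$ of (\ref{eq:mpts}). What the paper's argument buys in exchange is brevity: three evaluations plus the rigidity of affine functions, with no $4\times4$ determinant setup and no index gymnastics beyond a single sign.
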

\begin{proof}
Let 
\[
f(x,y) = \lambda_{j}L_{ij}(x,y) + \lambda_{k}L_{ik}(x,y) + \lambda_{l}L_{il}(x,y).
\]
Then 
\begin{equation*}
f(\mathbf{c}_j) = \lambda_k L_{ik}(\mathbf{c}_j) + \lambda_l L_{il}(\mathbf{c}_j) = \pm(\lambda_k \lambda_l - \lambda_l \lambda_k) = 0,
\end{equation*}
since $L_{ij}(\mathbf{c}_j) = 0,$ and $\lambda_k L_{ik}(\mathbf{c}_j)$ and $\lambda_l L_{il}(\mathbf{c}_j)$ must have opposite signs, which depend on 
the orientation of $i,j,k$ and $l.$ Similarly, $f(\mathbf{c}_k)=0$ 
and $f(\mathbf{c}_l)=0.$ Thus, since $f$ is a linear function which is zero at three non-collinear points, it must be identically zero. 
\end{proof}

This Proposition gives us an alternative method to evaluate some of the functions $L_{ij}.$ We assume that we are given the lines $L_{01},$ 
$L_{12}$ and $L_{23},$ corresponding to the lines in the control polygon.
It can easily be shown, using (\ref{eq:otherlines}), that
\begin{equation*}
\begin{split}
L_{02} & = \frac{\lambda_3 L_{23}-\lambda_1 L_{12}}{\lambda_0}, \\
L_{13} & = \frac{\lambda_0 L_{01}-\lambda_2 L_{12}}{\lambda_3}, \\
L_{03} & = \frac{\lambda_1\lambda_2 L_{12} - \lambda_0\lambda_1 L_{01} - \lambda_2\lambda_3 L_{23}}{\lambda_0\lambda_3}.
\end{split}
\end{equation*} 
When using this method as a numerical technique, care should be taken to ensure a sufficient degree of numerical stability. For example, if the 
denominators become small, it may be better to choose a different set of three lines to evaluate on, or to compute each line evaluation explicitly. 
A similar method is used by Sederberg and Parry in \cite{sederberg_1986}, in order to simplify the symbolic expansion of the determinant required 
in their method.

\subsection{Properties of the coefficients}

In the following definition we give three quantities which can be used to determine several characteristics of the geometry of the curve, such as 
when the curve degenerates, and in what region the singularity lies.

\begin{defn}\label{defn:phis}
We define three quantities $\phi_1,$ $\phi_2$ and $\phi_3$ as follows:
\begin{equation*}
\begin{split}
\phi_1 & = u_0 u_2 \lambda_1^2 - u_1^2 \lambda_0 \lambda_2, \\
\phi_2 & = u_1 u_3 \lambda_2^2 - u_2^2 \lambda_1 \lambda_3, \\
\phi_3 & = u_1 u_2 \lambda_0 \lambda_3 - u_0 u_3 \lambda_1 \lambda_2.  
\end{split}
\end{equation*}
\end{defn}

These quantities are based on the coefficients $(b_i)_{i=0}^3$ and we can write 
\begin{equation}\label{eq:phicoefs}
\begin{split}
b_0 & = \phi_3 u_1 u_2 \lambda_1 \lambda_2, \\
b_1 & = \phi_1 u_1 u_3 \lambda_1 \lambda_3, \\
b_2 & = \phi_2 u_0 u_2 \lambda_0 \lambda_2, \\
b_3 & = \phi_3 u_0 u_3 \lambda_0 \lambda_3.
\end{split}
\end{equation}
We thus have a compact form of the implicit equation
\begin{equation}\label{eq:qexplicit}
\begin{split}
q(x,y) =  \phi_3 & (u_1 u_2 \lambda_1 \lambda_2 K_0(x,y) + u_0 u_3 \lambda_0 \lambda_3 K_3(x,y)) \\ 
         & + \phi_1 u_1 u_3 \lambda_1 \lambda_3 K_1(x,y) + \phi_2 u_0 u_2 \lambda_0 \lambda_2 K_2(x,y).
\end{split}
\end{equation}
There is also a relation between the three quantities given as follows:
\begin{prop}\label{prop:p1p2p3}
The following two equations hold:
\[
u_3\lambda_2\phi_1 + u_1\lambda_0\phi_2 + u_2\lambda_1\phi_3 = 0,
\]
and 
\[
u_2\lambda_3\phi_1 + u_0\lambda_1\phi_2 + u_1\lambda_2\phi_3 = 0.
\]
\end{prop}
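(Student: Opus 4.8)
The plan is to prove both identities by direct substitution of the definitions of $\phi_1,\phi_2,\phi_3$ from Definition \ref{defn:phis}, exploiting the fact that each $\phi_j$ is a difference of two monomials in the $u_i$ and $\lambda_i$. For the first identity I would expand the left-hand side term by term:
\[
u_3\lambda_2\phi_1 = u_0 u_2 u_3 \lambda_1^2 \lambda_2 - u_1^2 u_3 \lambda_0 \lambda_2^2,
\]
\[
u_1\lambda_0\phi_2 = u_1^2 u_3 \lambda_0 \lambda_2^2 - u_1 u_2^2 \lambda_0 \lambda_1 \lambda_3,
\]
\[
u_2\lambda_1\phi_3 = u_1 u_2^2 \lambda_0 \lambda_1 \lambda_3 - u_0 u_2 u_3 \lambda_1^2 \lambda_2.
\]
The sum is then seen to vanish by a cyclic (telescoping) cancellation: the positive monomial of each line is precisely the negative monomial of the next, with the last looping back to the first. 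The second identity is handled in exactly the same way, and the same cyclic cancellation pattern emerges. In both cases the computation is purely formal and requires no geometric hypotheses beyond the definitions.

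A cleaner way to organize the bookkeeping — and one that explains why the two relations appear as a pair — is to introduce the coefficient vectors $\mathbf{a} = (u_3\lambda_2,\, u_1\lambda_0,\, u_2\lambda_1)$ and $\mathbf{b} = (u_2\lambda_3,\, u_0\lambda_1,\, u_1\lambda_2)$. The two claimed identities are then exactly the orthogonality statements $\mathbf{a}\cdot(\phi_1,\phi_2,\phi_3) = 0$ and $\mathbf{b}\cdot(\phi_1,\phi_2,\phi_3) = 0$. A short direct check of the three components shows that
\[
\mathbf{a}\times\mathbf{b} = -(\phi_1,\phi_2,\phi_3),
\]
after which both identities follow at once, since the cross product $\mathbf{a}\times\mathbf{b}$ is automatically orthogonal to each of its factors. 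I would likely present the straightforward expansion as the main argument and record the cross-product identity as a remark, since it makes transparent that the pair $(\phi_j)$ spans (up to scale) the one-dimensional kernel of the $2\times 3$ matrix with rows $\mathbf{a}$ and $\mathbf{b}$.

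There is no genuine conceptual obstacle here; the statement is an algebraic identity in the eight quantities $u_i,\lambda_i$. The only real hazard is clerical — keeping track of signs and of the six monomials per identity without transposing indices. The cross-product computation doubles as a consistency check against such errors, since it must reproduce each $\phi_j$ exactly up to the overall minus sign.
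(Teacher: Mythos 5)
Your proposal is correct and follows essentially the same route as the paper, which likewise proves the proposition by direct substitution of Definition \ref{defn:phis} and polynomial expansion (the paper simply omits the details that you write out, and your telescoping cancellations check out exactly). The cross-product observation $\mathbf{a}\times\mathbf{b} = -(\phi_1,\phi_2,\phi_3)$ is a nice supplementary consistency check, though note the kernel remark only holds verbatim when the $\phi_j$ are not all zero, i.e.\ outside the conic-degenerate case of Theorem \ref{thm:conic}.
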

\begin{proof}
The proof is a simple exercise in expanding polynomials after substituting the definitions $\phi_1,\phi_2$ and $\phi_3.$ We
therefore omit the details here.
\end{proof}

\section{Double points on rational cubic curves}\label{sec:double}

Since the curve is rational, there always exists a single double point in the form of a crunode, an acnode or a cusp. It is a surprising fact that 
the double point of a rational cubic curve with rational coefficients is necessarily rational, despite the fact that the corresponding parameter 
values may be irrational or complex. This was apparently first noticed by Sederberg in \cite{sederberg_1985} (Theorem 1):
\begin{center}
\emph{``Every rational cubic curve defined by polynomials with rational coefficients has a double point whose coordinates are real and rational''.}
\end{center}

In this section we derive the equations of two lines which intersect at the double point, in terms of the coefficients we have already discussed. We 
give exact formulas for the location of the double point in terms of barycentric combinations of its control points. We also define a condition which 
detects when there is an `unwanted' self-intersection in the region of interest.

We first give some identities for the gradient which will be needed in the proofs in this section. From Definition \ref{def:L} it is clear that we have  
\[
\frac{\partial}{\partial x} L_{ij}(x,y) = c_{i1} - c_{j1},  
\]
and
\[
\frac{\partial}{\partial y} L_{ij}(x,y) = c_{j0} - c_{i0}. 
\]
For compactness of notation we define $\mathbf{c}_{ij}^{\bot} = \colvec{c_{i1}-c_{j1} \\ c_{j0}-c_{i0}}.$ Thus, using the product rule, we can write the 
gradients of the basis functions as follows:
\begin{equation}\label{eq:gradki}
\begin{split}
\nabla K_{0}(x,y) & = \mathbf{c}_{23}^{\bot} L_{01} L_{12} +
                      \mathbf{c}_{12}^{\bot} L_{01} L_{23} +
                      \mathbf{c}_{01}^{\bot} L_{12} L_{23}, \\
\nabla K_{1}(x,y) & = \mathbf{c}_{13}^{\bot} 2 L_{01} L_{13} + 
                      \mathbf{c}_{01}^{\bot} L_{13}^2, \\
\nabla K_{2}(x,y) & = \mathbf{c}_{02}^{\bot} 2 L_{02} L_{23} + 
                      \mathbf{c}_{23}^{\bot} L_{02}^2, \\
\nabla K_{3}(x,y) & = \mathbf{c}_{03}^{\bot} 3 L_{03}^2.
\end{split}
\end{equation}
In addition, using (\ref{eq:qexplicit}), we can write the gradient function, $\nabla q,$ in the compact form
\begin{equation}\label{eq:gradq}
\begin{split}
\nabla q(x,y) =  \phi_3 & (u_1 u_2 \lambda_1 \lambda_2 \nabla K_0(x,y) + u_0 u_3 \lambda_0 \lambda_3 \nabla K_3(x,y)) \\ 
                & + \phi_1 u_1 u_3 \lambda_1 \lambda_3 \nabla K_1(x,y) + \phi_2 u_0 u_2 \lambda_0 \lambda_2 \nabla K_2(x,y).
\end{split}
\end{equation}

\subsection{Location of the singularity}

In this section we determine the location of the singularity in affine space. 
The following proposition deals with the case where the singularity occurs at one of the endpoints $\mathbf{c}_0$ or $\mathbf{c}_3.$ 

\begin{prop}\label{prop:endpoints}
Let the control points and weights of a non-degenerate rational cubic B\'ezier curve be given, and assume that no three control points are collinear. 
Then the singularity occurs at the end point $\mathbf{c}_0$ if and only if $\phi_2=0$ or $u_1=0.$ Similarly, the singularity occurs at the end point 
$\mathbf{c}_3$ if and only if $\phi_1=0$ or $u_2=0.$
\end{prop}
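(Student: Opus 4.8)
The plan is to characterize the singularity through the vanishing of the gradient $\nabla q$ at the candidate point. Observe first that each basis function carries a factor $L_{0j}$ for some $j$, so each $K_i$ vanishes at $\mathbf{c}_0$ and hence $q(\mathbf{c}_0)=0$ automatically. Since a rational cubic has a unique singular point, and a point of the zero set is singular exactly when $\nabla q$ vanishes there, the double point coincides with $\mathbf{c}_0$ precisely when $\nabla q(\mathbf{c}_0)=\mathbf{0}$. The whole argument therefore reduces to evaluating the gradient formula (\ref{eq:gradq}) at $\mathbf{c}_0$ and deciding when it vanishes.

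First I would record the six line values at $\mathbf{c}_0$. Expanding the determinant of Definition \ref{def:L} gives $L_{ij}(\mathbf{c}_k)=\lambda_{kij}$, and reducing with the antisymmetry of $\lambda_{ijk}$ and the index conventions of Definition \ref{def:lambda} yields
\[
L_{01}(\mathbf{c}_0)=L_{02}(\mathbf{c}_0)=L_{03}(\mathbf{c}_0)=0,\quad L_{12}(\mathbf{c}_0)=\lambda_3,\quad L_{13}(\mathbf{c}_0)=-\lambda_2,\quad L_{23}(\mathbf{c}_0)=\lambda_1.
\]
Substituting these into (\ref{eq:gradki}), the vanishing of $L_{01},L_{02},L_{03}$ kills most terms: $\nabla K_2(\mathbf{c}_0)$ and $\nabla K_3(\mathbf{c}_0)$ are zero, while $\nabla K_0(\mathbf{c}_0)=\lambda_1\lambda_3\,\mathbf{c}_{01}^{\bot}$ and $\nabla K_1(\mathbf{c}_0)=\lambda_2^2\,\mathbf{c}_{01}^{\bot}$. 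In particular every surviving contribution is a scalar multiple of the single vector $\mathbf{c}_{01}^{\bot}$.

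Next I would assemble these into (\ref{eq:gradq}); only the $\phi_3 K_0$ and $\phi_1 K_1$ terms survive, and collecting them gives
\[
\nabla q(\mathbf{c}_0)=u_1\lambda_1\lambda_2\lambda_3\left(u_2\lambda_1\phi_3+u_3\lambda_2\phi_1\right)\mathbf{c}_{01}^{\bot}.
\]
Here the first relation of Proposition \ref{prop:p1p2p3}, namely $u_3\lambda_2\phi_1+u_1\lambda_0\phi_2+u_2\lambda_1\phi_3=0$, is exactly what is needed: it replaces the bracketed factor by $-u_1\lambda_0\phi_2$, so that
\[
\nabla q(\mathbf{c}_0)=-\,u_1^2\,\lambda_0\lambda_1\lambda_2\lambda_3\,\phi_2\,\mathbf{c}_{01}^{\bot}.
\]
Because no three control points are collinear, every $\lambda_i\neq 0$ and $\mathbf{c}_0\neq\mathbf{c}_1$, whence $\mathbf{c}_{01}^{\bot}\neq\mathbf{0}$; the gradient thus vanishes if and only if $u_1^2\phi_2=0$, i.e. $u_1=0$ or $\phi_2=0$. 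This settles the statement for $\mathbf{c}_0$.

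Finally, the claim for $\mathbf{c}_3$ follows from the reparametrization $t\mapsto 1-t$, which exchanges $\mathbf{c}_0\leftrightarrow\mathbf{c}_3$ and $\mathbf{c}_1\leftrightarrow\mathbf{c}_2$, hence $u_1\leftrightarrow u_2$, $\lambda_1\leftrightarrow\lambda_2$, $\lambda_0\leftrightarrow\lambda_3$, and consequently $\phi_1\leftrightarrow\phi_2$ with $\phi_3$ fixed; applying the case just proved to the reparametrized curve gives the condition $\phi_1=0$ or $u_2=0$. The only step requiring genuine care, rather than mechanical substitution, is the sign bookkeeping that reduces the $L_{ij}(\mathbf{c}_0)$ to the $\lambda_i$, together with recognizing the surviving bracket as precisely the left-hand side of Proposition \ref{prop:p1p2p3}; once that identification is made, the collapse to a single term proportional to $u_1^2\phi_2$ is immediate.
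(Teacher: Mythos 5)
Your proof is correct and follows essentially the same route as the paper: evaluate $\nabla q(\mathbf{c}_0)$ using the vanishing of $L_{01},L_{02},L_{03}$ there, observe that only the $\nabla K_0$ and $\nabla K_1$ contributions survive as multiples of $\mathbf{c}_{01}^{\bot}$, and reduce the scalar factor to $-\Lambda u_1^2\phi_2$. The only cosmetic differences are that you simplify via the $\phi$-form (\ref{eq:gradq}) together with Proposition \ref{prop:p1p2p3}, where the paper expands the coefficients $b_0,b_1$ directly in terms of $U$ and $\Lambda$, and that you make explicit both the ``singular iff gradient vanishes'' step and the $t\mapsto 1-t$ symmetry behind the paper's ``similarly'' for $\mathbf{c}_3$.
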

\begin{proof}
Since the functions $(L_{0i})_{i=1}^3,$ evaluated at the endpoint $\mathbf{c}_0$ are all zero, we have $\nabla K_{2}(\mathbf{c}_0)=\nabla K_{3}(\mathbf{c}_0)=0.$ Thus
\begin{equation*}
\begin{split}
\nabla q(\mathbf{c}_0)     & = \mathbf{c}_{01}^{\bot} \left( L_{12}(\mathbf{c}_0)L_{23}(\mathbf{c}_0) \left( u_1^2 u_2^2 \Lambda - \lambda_1^2\lambda_2^2 U \right) 
                                                           + L_{13}^2(\mathbf{c}_0)\left(\lambda_1^3\lambda_3 U - u_1^3 u_3 \Lambda \right) \right) \\
                           & = \mathbf{c}_{01}^{\bot} \Lambda u_1^2 \left( u_2^2 \lambda_1\lambda_3 - u_1 u_3 \lambda_2^2 \right) \\
                           & = -\mathbf{c}_{01}^{\bot} \Lambda u_1^2 \phi_2.
\end{split}
\end{equation*}
Clearly this is zero if and only if $\phi_2=0$ or $u_1=0,$ since $\mathbf{c}_0$ and $\mathbf{c}_1$ are distinct. Similarly, $\nabla q(\mathbf{c}_3)=0$ if and 
only if $\phi_1=0$ or $u_2=0.$
\end{proof}

In non-degenerate cases, except for those with the conditions above, we can derive the equations of two lines, $\tilde{S}_1$ and $\tilde{S}_2,$ which both intersect 
the singularity and either $\mathbf{c}_0$ or $\mathbf{c}_3.$

\begin{prop}\label{prop:singlines}
Suppose we have a planar rational cubic B\'ezier curve with no three control points collinear and non-zero weights. Suppose further that the double point of 
the curve is finite, and does not lie on either of the endpoints $\mathbf{c}_0$ or $\mathbf{c}_3.$ Define two equations as follows:
\begin{equation}\label{eq:singlines1}
\begin{split}
\tilde{S}_1(x,y) & = L_{02}(x,y) u_2 \phi_1 - L_{03}(x,y)u_1 \phi_3 = 0, \\ 
\tilde{S}_2(x,y) & = L_{13}(x,y) u_1 \phi_2 - L_{03}(x,y)u_2 \phi_3 = 0.
\end{split}
\end{equation}
The equations (\ref{eq:singlines1}) define two lines which intersect the end points of the B\'ezier curve, $\mathbf{c}_0$ and 
$\mathbf{c}_3$ respectively. Moreover, the lines intersect each other at the unique double point of the curve. The lines are also determined by the 
alternative equations 
\begin{equation}\label{eq:singlines2}
\begin{split}
\hat{S}_1(x,y) & = L_{01}(x,y) u_2 \phi_1 - L_{03}(x,y)u_0 \phi_2  = 0, \\
\hat{S}_2(x,y) & = L_{23}(x,y) u_1 \phi_2 - L_{03}(x,y)u_3 \phi_1 = 0.
\end{split}
\end{equation}
\end{prop}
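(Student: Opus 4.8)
The plan is to treat the two asserted properties of each line separately, namely incidence with an endpoint and incidence with the double point, and then to reconcile the pair $(\tilde{S}_1,\tilde{S}_2)$ with the pair $(\hat{S}_1,\hat{S}_2)$. First I would dispose of the endpoint incidences, which are immediate. Since the lines $L_{02}$ and $L_{03}$ both pass through $\mathbf{c}_0$, Definition \ref{def:L} gives $L_{02}(\mathbf{c}_0)=L_{03}(\mathbf{c}_0)=0$, so $\tilde{S}_1(\mathbf{c}_0)=0$; likewise $L_{13}(\mathbf{c}_3)=L_{03}(\mathbf{c}_3)=0$ gives $\tilde{S}_2(\mathbf{c}_3)=0$. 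Thus $\tilde{S}_1$ passes through $\mathbf{c}_0$ and $\tilde{S}_2$ through $\mathbf{c}_3$ with no computation.

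The heart of the proof is showing that both lines also pass through the double point $\mathbf{d}$, which by hypothesis is finite and distinct from $\mathbf{c}_0,\mathbf{c}_3$, and which is characterised by $\nabla q(\mathbf{d})=\mathbf{0}$. The strategy is to extract from the vector identity $\nabla q(\mathbf{d})=\mathbf{0}$ two scalar consequences equivalent (up to factors that are nonzero at $\mathbf{d}$) to $\tilde{S}_1(\mathbf{d})=0$ and $\tilde{S}_2(\mathbf{d})=0$. Concretely, I would expand $\nabla q$ using (\ref{eq:gradq}) together with the component gradients (\ref{eq:gradki}), and then project $\nabla q$ onto suitable fixed directions by taking the scalar two-dimensional cross product with edge-perpendicular vectors $\mathbf{c}_{ij}^{\bot}$. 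Crossing with $\mathbf{c}_{03}^{\bot}$, for instance, annihilates the $\nabla K_3$ contribution, since $\nabla K_3$ is parallel to $\mathbf{c}_{03}^{\bot}$, and the cross products $\mathbf{c}_{ij}^{\bot}\wedge\mathbf{c}_{kl}^{\bot}$ appearing in the surviving terms reduce to the signed areas $\lambda_m$ of Definition \ref{def:lambda}. After collapsing the resulting products of linear forms with the relation of Proposition \ref{prop:l0l1l2} and matching coefficients against the two identities of Proposition \ref{prop:p1p2p3}, the vanishing of $\nabla q$ at $\mathbf{d}$ should be seen to reduce exactly to $\tilde{S}_1(\mathbf{d})=\tilde{S}_2(\mathbf{d})=0$. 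I anticipate this bookkeeping, keeping track of the signs in the $\lambda_m$ and isolating the $\phi$-relations from the expanded products, to be the main obstacle; the computation in Proposition \ref{prop:endpoints}, where $\nabla q(\mathbf{c}_0)$ collapsed to $-\mathbf{c}_{01}^{\bot}\Lambda u_1^2\phi_2$, is the prototype for these manipulations.

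With both incidences in hand, $\tilde{S}_1$ and $\tilde{S}_2$ are distinct lines: one meets $\mathbf{c}_0$ and the other $\mathbf{c}_3$, and under the standing non-degeneracy hypotheses they are not the single line through $\mathbf{c}_0$, $\mathbf{c}_3$ and $\mathbf{d}$, so their unique common point is necessarily $\mathbf{d}$, which establishes the first assertion.

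Finally, to show that $\hat{S}_1,\hat{S}_2$ determine the same two lines, I would verify that $\hat{S}_1$ is a scalar multiple of $\tilde{S}_1$ and $\hat{S}_2$ of $\tilde{S}_2$. Applying Proposition \ref{prop:l0l1l2} with $i=0$ to substitute $\lambda_2 L_{02}=-(\lambda_1 L_{01}+\lambda_3 L_{03})$ into $\tilde{S}_1$, the coefficient of $L_{03}$ matches that of $\hat{S}_1$ precisely when $u_2\lambda_3\phi_1+u_0\lambda_1\phi_2+u_1\lambda_2\phi_3=0$, which is the second identity of Proposition \ref{prop:p1p2p3}; the analogous substitution with $i=3$ reduces $\tilde{S}_2$ to $\hat{S}_2$ via the first identity. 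This last part is a short verification rather than an obstacle.
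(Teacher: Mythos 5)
Your endpoint incidences are fine, and your final step is also correct: multiplying $\tilde{S}_1$ by $\lambda_2$, eliminating $L_{02}$ via Proposition \ref{prop:l0l1l2}, and invoking the second identity of Proposition \ref{prop:p1p2p3} gives $\lambda_2\tilde{S}_1=-\lambda_1\hat{S}_1$ (and symmetrically $\lambda_1\tilde{S}_2=-\lambda_2\hat{S}_2$ via the first identity), which is exactly the equivalence the paper asserts without detail. The logical skeleton---double point lies on both lines, lines are distinct, hence their unique common point is the double point---is also sound. The genuine gap is in your central step, and it is not a matter of bookkeeping: it rests on a false characterisation. The double point is \emph{not} characterised by $\nabla q(\mathbf{d})=\mathbf{0}$; it is characterised by $q(\mathbf{d})=0$ \emph{together with} $\nabla q(\mathbf{d})=\mathbf{0}$. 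The gradient of a cubic vanishes on the intersection of two conics, i.e.\ at up to four points, and only those lying on the curve are singular points of the curve. Hence the implication you aim to extract---that $\nabla q(\mathbf{d})=\mathbf{0}$ alone forces $\tilde{S}_1(\mathbf{d})=\tilde{S}_2(\mathbf{d})=0$---is false pointwise, so no projection onto fixed directions $\mathbf{c}_{ij}^{\bot}$ followed by coefficient-matching can produce it. The paper's own Example 1 (unit-square control points, unit weights) is a counterexample: its implicit equation is proportional to $4y^3+27x^2+9y^2-27x$, whose gradient vanishes at the double point $(1/2,-3/2)$ \emph{and} at $(1/2,0)$, a critical point not on the curve; the latter satisfies neither $\tilde{S}_1=0$ (proportional to $3x+y$) nor $\tilde{S}_2=0$ (proportional to $-3x+y+3$). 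Correspondingly, one checks that no constant-coefficient combination of $\partial q/\partial x$ and $\partial q/\partial y$ is divisible by either line, so the factorisation your plan anticipates does not exist.

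A repaired version of your direction would have to use $q(\mathbf{d})=0$ as well, together with an argument (B\'ezout-type) that the double point cannot lie on $L_{03}$ so that spurious factors can be cancelled; that is a substantially heavier argument than the one you outline. The paper avoids the issue by arguing in the opposite direction: it takes an arbitrary point satisfying $\tilde{S}_1=\tilde{S}_2=0$ (equivalently $\hat{S}_1=\hat{S}_2=0$), uses these relations to express every $L_{ij}$ as an explicit multiple of $L_{03}$ as in Eq.\ (\ref{eq:l03cond}), substitutes into (\ref{eq:qexplicit}) and (\ref{eq:gradq}), and shows by direct computation, using Proposition \ref{prop:p1p2p3}, that both $q$ and $\nabla q$ vanish at any such point; uniqueness of the singular point of a rational cubic then identifies the intersection of the two lines with the double point. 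If you wish to keep your direction of implication, you must first establish that the double point lies on both lines by some such computation---it cannot be conjured from the vanishing of the gradient alone.
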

\begin{proof}
We can immediately see that $\tilde{S}_1(\mathbf{c}_0)=0$ since, by definition, it is a linear combination of $L_{02}$ and $L_{03}.$ Similarly, 
$\tilde{S}_2(\mathbf{c}_3)=0.$ It remains to show that both lines intersect at the double point of $q.$ 

We now use both equations (\ref{eq:singlines1}) and (\ref{eq:singlines2}), which can easily be shown to be equivalent, using Proposition \ref{prop:l0l1l2}. 
We must prove that both the implicit polynomial $q$ and its gradient $\nabla q,$ vanish when these equations are satisfied. From (\ref{eq:singlines1}) and 
(\ref{eq:singlines2}) we infer that 
\begin{equation}\label{eq:l03cond}
L_{01} = \frac{L_{03}u_0 \phi_2}{u_2 \phi_1}, \
L_{23} = \frac{L_{03}u_3 \phi_1}{u_1 \phi_2}, \
L_{02} = \frac{L_{03}u_1 \phi_3}{u_2 \phi_1}, \
L_{13} = \frac{L_{03}u_2 \phi_3}{u_1 \phi_2}, \
\end{equation}
and
\begin{equation}
L_{12} = \frac{L_{23} \lambda_3-L_{02}\lambda_0}{\lambda_1} =
                                  L_{03}\left(\frac{\phi_1 u_3 \lambda_3}{\phi_2 u_1 \lambda_1}-\frac{\phi_3 u_1 \lambda_0}{\phi_1 u_2 \lambda_1}\right).
\end{equation}
These are well defined since, by Proposition \ref{prop:endpoints}, we have $\phi_1\neq0$ and $\phi_2\neq0,$ and the weights are non-zero. Thus, by 
(\ref{eq:qexplicit}), the formula for $q$ becomes
\begin{equation*}
\begin{split}
q(x,y) & = \phi_3 \lambda_0 \lambda_3 u_0 u_3 L_{03}^3  + \phi_3\left(\frac{\phi_1 \lambda_3 u_3}{\phi_2 u_1 \lambda_1}
                                    -  \frac{\phi_3 \lambda_0 u_1}{\phi_1 u_2 \lambda_1}\right) \lambda_1\lambda_2 u_0 u_3 L_{03}^3 \\
       &  \quad\quad\quad\quad\quad\quad\quad\quad\quad\quad\quad
        + \frac{\phi_3^2 \lambda_1 \lambda_3 u_0 u_2 u_3}{\phi_2 u_1}L_{03}^3  + \frac{\phi_3^2 \lambda_0 \lambda_2 u_0 u_1 u_3}{\phi_1 u_2}L_{03}^3,  \\
       & = L_{03}^{3} \frac{\lambda_3 \phi_3 u_0 u_3}{\phi_2 u_1} \left(\phi_2 \lambda_0 u_1 + \phi_3 \lambda_1 u_2 + \phi_1 \lambda_2 u_3\right), \\  
       & = 0,
\end{split}
\end{equation*}
by Proposition \ref{prop:p1p2p3}. 

In order to prove that $\nabla q = 0$ when $S_1=S_2=0,$ we substitute the equations (\ref{eq:l03cond}) into the gradient identities (\ref{eq:gradki}). 
After expanding the formula (\ref{eq:gradq}), we can group together the coefficients of $\mathbf{c}_{ij}^\perp$ to write
\[
\nabla q = L_{03}^2 \left(\psi_{01}\mathbf{c}_{01}^\perp+\psi_{12}\mathbf{c}_{12}^\perp+\psi_{23}\mathbf{c}_{23}^\perp
                           +\psi_{02}\mathbf{c}_{02}^\perp+\psi_{13}\mathbf{c}_{13}^\perp+\psi_{03}\mathbf{c}_{03}^\perp\right),
\]
where 
\begin{equation*}
\begin{split}
\psi_{01} & = \phi_3 u_0 u_3 \lambda_0\lambda_1, \\
\psi_{12} & = \phi_3 u_0 u_3 \lambda_1\lambda_2, \\
\psi_{23} & = \phi_3 u_0 u_3 \lambda_2\lambda_3, \\
\psi_{02} & = 2\phi_3 u_0 u_3 \lambda_0\lambda_2,  \\
\psi_{13} & = 2\phi_3 u_0 u_3 \lambda_1\lambda_3,  \\
\psi_{03} & = 3\phi_3 u_0 u_3 \lambda_0\lambda_3. 
\end{split}
\end{equation*}
The quantities $\psi_{ij}$ can be verified by writing out the coefficients. Thus, we can factor out the common factor $\phi_3 u_0 u_3$, to get
\[
\nabla q = L_{03}^2 \phi_3 u_0 u_3 \left(\lambda_0\lambda_1\mathbf{c}_{01}^\perp+\lambda_1\lambda_2\mathbf{c}_{12}^\perp+\lambda_2\lambda_3\mathbf{c}_{23}^\perp
                              +2\lambda_0\lambda_2\mathbf{c}_{02}^\perp+2\lambda_1\lambda_3\mathbf{c}_{13}^\perp+3\lambda_0\lambda_3\mathbf{c}_{03}^\perp\right).
\]
Now, we can individually verify each of the following
\begin{equation*}
\begin{split}
\lambda_0\lambda_1\mathbf{c}_{01}^\perp + \lambda_0\lambda_2\mathbf{c}_{02}^\perp + \lambda_0\lambda_3\mathbf{c}_{03}^\perp  & = 0, \\
\lambda_2\lambda_3\mathbf{c}_{23}^\perp + \lambda_1\lambda_3\mathbf{c}_{13}^\perp + \lambda_0\lambda_3\mathbf{c}_{03}^\perp & = 0,  \\
\lambda_1\lambda_2\mathbf{c}_{12}^\perp + \lambda_0\lambda_2\mathbf{c}_{02}^\perp 
                                             + \lambda_1\lambda_3\mathbf{c}_{13}^\perp + \lambda_0\lambda_3\mathbf{c}_{03}^\perp & = 0.
\end{split}
\end{equation*}
Thus, summing the previous three equations, we must have $\nabla q = 0,$ proving the statement.
\end{proof}

The lines defined in Proposition \ref{prop:singlines} are shown in Figure \ref{fig:singlines}. It can be seen that the segments of the curve limited to the 
quadrants defined by the lines $\tilde{S}_1$ and $\tilde{S}_2,$ are non-singular. This is necessarily true due to rational cubic curves having only one singularity. 
The following Theorem, which is a consequence of the previous Proposition, gives the location of the singularity in the affine plane.

\begin{thm}\label{thm:doublepoint}
Suppose that the double point $\mathbf{s},$ of a non-degenerate rational cubic curve, is not at infinity. Then the following barycentric combination of the four B\'eizer control 
points $(\mathbf{c}_i)_{i=0}^3$ determines its location exactly:
\[
\mathbf{s} = \frac{
                \mathbf{c}_0 \lambda_1\phi_1\phi_3 u_2^2 
              + \frac{1}{2}(\mathbf{c}_0\lambda_0-\mathbf{c}_1\lambda_1+\mathbf{c}_2\lambda_2-\mathbf{c}_3\lambda_3)\phi_1\phi_2 u_1 u_2 
              - \mathbf{c}_3 \lambda_2\phi_2\phi_3 u_1^2 }
             { \lambda_1\phi_1\phi_3 u_2^2 
              + \frac{1}{2}(\lambda_0-\lambda_1+\lambda_2-\lambda_3)\phi_1\phi_2 u_1 u_2 
              - \lambda_2\phi_2\phi_3 u_1^2 } 
\]
The formula can be simplified in a number of different ways, using the identities provided in the previous sections. For example, 
in the barycentric coordinate system defined by $\mathbf{c}_0,$ $\mathbf{c}_2$ and $\mathbf{c}_3,$ we have
\begin{equation}\label{eq:sreduced1}
\mathbf{s} = \frac{ \mathbf{c}_0 \phi_1^2 u_2 u_3 - \mathbf{c}_2 \phi_1\phi_2 u_1 u_2 + \mathbf{c}_3 \phi_2\phi_3 u_1^2 }
              {\phi_1^2 u_2 u_3 - \phi_1\phi_2 u_1 u_2 + \phi_2\phi_3 u_1^2},
\end{equation}
or in the barycentric coordinate system defined by $\mathbf{c}_0,$ $\mathbf{c}_1$ and $\mathbf{c}_3$ 
\begin{equation}\label{eq:sreduced2}
\mathbf{s} = \frac{ \mathbf{c}_0 \phi_1\phi_3 u_2^2 - \mathbf{c}_1 \phi_1\phi_2 u_1 u_2 + \mathbf{c}_3 \phi_2^2 u_1 u_0 }
              {\phi_1\phi_3 u_2^2 - \phi_1\phi_2 u_1 u_2 + \phi_2^2 u_1 u_0}.
\end{equation}
\end{thm}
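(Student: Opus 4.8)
The plan is to locate $\mathbf{s}$ as the intersection point of the two singular lines produced in Proposition \ref{prop:singlines}, and then to read off its barycentric coordinates directly from the values of the linear forms $L_{ij}$ at that point. The geometric engine is Definition \ref{def:L}: since $L_{ij}(x,y)$ is the $3\times 3$ determinant with first row $(x,y,1)$, the value $L_{ij}(\mathbf{s})$ equals twice the signed area of the triangle $\mathbf{s}\mathbf{c}_i\mathbf{c}_j$. Consequently, for any triple of control points, say $\mathbf{c}_0,\mathbf{c}_2,\mathbf{c}_3$, the quantities $L_{23}(\mathbf{s})$, $L_{30}(\mathbf{s})=-L_{03}(\mathbf{s})$ and $L_{02}(\mathbf{s})$ are precisely the (unnormalised) barycentric weights of $\mathbf{s}$ with respect to that triangle, so that
\[
\mathbf{s} = \frac{L_{23}(\mathbf{s})\,\mathbf{c}_0 - L_{03}(\mathbf{s})\,\mathbf{c}_2 + L_{02}(\mathbf{s})\,\mathbf{c}_3}{L_{23}(\mathbf{s}) - L_{03}(\mathbf{s}) + L_{02}(\mathbf{s})}.
\]
Thus the only remaining task is to evaluate the ratios of the $L_{ij}$ at $\mathbf{s}$.

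These ratios are already available. At the double point both singular lines of Proposition \ref{prop:singlines} vanish, i.e. $\tilde{S}_1(\mathbf{s})=\tilde{S}_2(\mathbf{s})=\hat{S}_1(\mathbf{s})=\hat{S}_2(\mathbf{s})=0$, and solving these linear relations reproduces exactly the identities (\ref{eq:l03cond}), which express every $L_{ij}(\mathbf{s})$ as an explicit rational multiple of $L_{03}(\mathbf{s})$. Substituting $L_{23}(\mathbf{s}) = L_{03}(\mathbf{s})u_3\phi_1/(u_1\phi_2)$ and $L_{02}(\mathbf{s}) = L_{03}(\mathbf{s})u_1\phi_3/(u_2\phi_1)$ into the displayed barycentric formula, cancelling the common factor $L_{03}(\mathbf{s})$ (nonzero because $\mathbf{s}$ is finite and, by Proposition \ref{prop:endpoints}, not an endpoint) and clearing denominators by $u_1u_2\phi_1\phi_2$ yields precisely (\ref{eq:sreduced1}); repeating the argument with the triangle $\mathbf{c}_0\mathbf{c}_1\mathbf{c}_3$ and the substitutions $L_{13}(\mathbf{s})=L_{03}(\mathbf{s})u_2\phi_3/(u_1\phi_2)$, $L_{01}(\mathbf{s})=L_{03}(\mathbf{s})u_0\phi_2/(u_2\phi_1)$ gives (\ref{eq:sreduced2}).

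To obtain the symmetric four-point formula in the statement, I would re-express the three-point representation over all four control points. Here the crucial tool is the affine dependence of the four control points: writing them in homogeneous coordinates $(\mathbf{c}_i,1)$ and using the standard cofactor relation among four vectors in three-space gives, with the sign convention of Definition \ref{def:lambda}, the linear syzygies $\sum_{i=0}^3 \lambda_i\mathbf{c}_i = 0$ and $\sum_{i=0}^3\lambda_i = 0$. Scaling the numerator and denominator of (\ref{eq:sreduced1}) by $-\lambda_2$ and rewriting the resulting monomial $\lambda_2\phi_1^2u_2u_3$ by means of the first identity of Proposition \ref{prop:p1p2p3} converts the $\mathbf{c}_0$-weight into a combination of $\phi_1\phi_3u_2^2$ and $\phi_1\phi_2u_1u_2$; adding the appropriate multiple $-\tfrac12\phi_1\phi_2u_1u_2$ of the null combination $\sum_i\lambda_i\mathbf{c}_i=0$ then redistributes the weight concentrated on $\mathbf{c}_2$ symmetrically across all four points, producing the middle term $\tfrac12(\lambda_0\mathbf{c}_0-\lambda_1\mathbf{c}_1+\lambda_2\mathbf{c}_2-\lambda_3\mathbf{c}_3)$ together with the flanking terms $\lambda_1\phi_1\phi_3u_2^2\mathbf{c}_0$ and $-\lambda_2\phi_2\phi_3u_1^2\mathbf{c}_3$. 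The same operations applied with $\sum_i\lambda_i=0$ yield the denominator.

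I expect the main obstacle to be this last equivalence. Verifying that the large formula, the two reduced formulas (\ref{eq:sreduced1})--(\ref{eq:sreduced2}), and the raw intersection of the singular lines all define the same projective point requires carefully tracking how the projective scaling freedom of barycentric coordinates interacts with the two quadratic syzygies of Proposition \ref{prop:p1p2p3} and the linear syzygy $\sum_i\lambda_i\mathbf{c}_i=0$; it is pure polynomial algebra, but it is where all the delicate sign and index bookkeeping lives, and one must confirm that the apparent alternating signs in the middle term are exactly those forced by routing the all-positive combination $\sum_i\lambda_i\mathbf{c}_i$ through the even/odd indices. A secondary point is the degenerate boundary behaviour: when $\phi_1$ or $\phi_2$ vanishes the double point lies at an endpoint by Proposition \ref{prop:endpoints}, and one should check that the formula degrades gracefully rather than producing a spurious $0/0$, or else exclude these cases explicitly via the standing hypotheses.
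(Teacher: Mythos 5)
Your proposal is correct, and it is anchored on the same key fact as the paper's proof --- Proposition \ref{prop:singlines}, which identifies the double point as the intersection of the lines $\tilde{S}_1=0$ and $\tilde{S}_2=0$ --- but the execution is genuinely different. The paper works by \emph{verification}: it states the barycentric formula (\ref{eq:sreduced1}) and checks that $\tilde{S}_1$ and $\tilde{S}_2$ vanish there, exploiting the fact that an affine function evaluated at a barycentric combination equals the same combination of its values at the control points, so that everything collapses to $\tilde{S}_1(\mathbf{c}_0)=0$, $L_{02}(\mathbf{c}_2)=L_{03}(\mathbf{c}_3)=0$ and $L_{03}(\mathbf{c}_2)+L_{02}(\mathbf{c}_3)=0$. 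You instead \emph{derive} the formulas: the ratios $L_{ij}(\mathbf{s})/L_{03}(\mathbf{s})$ are already pinned down by the relations (\ref{eq:l03cond}) obtained inside the proof of Proposition \ref{prop:singlines}, and the signed-area reading of Definition \ref{def:L} turns the triple $\bigl(L_{23}(\mathbf{s}),\,-L_{03}(\mathbf{s}),\,L_{02}(\mathbf{s})\bigr)$ directly into unnormalised barycentric coordinates with respect to $\mathbf{c}_0,\mathbf{c}_2,\mathbf{c}_3$; substituting and clearing the factor $L_{03}(\mathbf{s})/(u_1u_2\phi_1\phi_2)$ yields (\ref{eq:sreduced1}), and the analogous computation yields (\ref{eq:sreduced2}). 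This buys two things the paper leaves implicit: the formulas are produced rather than posited, and the equivalence with the symmetric four-point formula is actually proved --- I checked your reduction, and it works exactly as you sketch: scaling (\ref{eq:sreduced1}) by $-\lambda_2$, rewriting $-\lambda_2\phi_1^2u_2u_3$ via the first identity of Proposition \ref{prop:p1p2p3} in the form $\lambda_1\phi_3u_2+\lambda_0\phi_2u_1=-\lambda_2\phi_1u_3$, and absorbing the multiple $\tfrac12\phi_1\phi_2u_1u_2$ of the null combinations $\sum_i\lambda_i\mathbf{c}_i=0$ and $\sum_i\lambda_i=0$ (the vector syzygy does hold with the paper's sign conventions, by the cofactor expansion you cite, and is consistent with Proposition \ref{prop:lambdapmpm}) reproduces the stated numerator and denominator. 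The cost of your route is that it needs the relations (\ref{eq:l03cond}) to be legitimate at $\mathbf{s}$, which the paper's verification sidesteps.

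On that point, two small repairs. First, your parenthetical justification that $L_{03}(\mathbf{s})\neq 0$ (``$\mathbf{s}$ is finite and not an endpoint'') is not by itself sufficient: $L_{03}(\mathbf{s})=0$ a priori only says that $\mathbf{s}$ lies on the line through $\mathbf{c}_0$ and $\mathbf{c}_3$. The correct one-line argument is that if $L_{03}(\mathbf{s})=0$, then (\ref{eq:l03cond}) forces every $L_{ij}(\mathbf{s})$ to vanish; in particular $L_{01}(\mathbf{s})=L_{02}(\mathbf{s})=0$ places $\mathbf{s}$ on two distinct lines through $\mathbf{c}_0$ (distinct because no three control points are collinear), so $\mathbf{s}=\mathbf{c}_0$, contradicting the hypothesis of Proposition \ref{prop:singlines}. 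Second, invoking (\ref{eq:l03cond}) at all presupposes $\phi_1\neq 0$ and $\phi_2\neq 0$; this is exactly what Proposition \ref{prop:endpoints} provides under the assumption that the double point is not an endpoint, but it should be stated up front as the licence for the whole substitution, not only in connection with the nonvanishing of $L_{03}(\mathbf{s})$.
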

\begin{proof}
The formulas can be found by solving the linear equations $\tilde{S}_1(x,y) = 0$ and $\tilde{S}_2(x,y) = 0,$ in a particular coordinate system. It is clear that all
the formulas are barycentric combinations of the control points, by observing the coefficients in the numerator and denominator.
In order to validate that $\mathbf{s}$ is the intersection of the two lines, we can simply evaluate the functions $\tilde{S}_1$ and $\tilde{S}_2$ at $\mathbf{s}:$
\begin{equation*}
\begin{split}
\tilde{S}_1(\mathbf{s}) & = \frac{u_1\phi_2}{\phi_1^2 u_2 u_3 - \phi_1\phi_2 u_1 u_2 + \phi_2\phi_3 u_1^2} 
                               \left( - \tilde{S}_1(\mathbf{c}_2)\phi_1 u_2 - \tilde{S}_1(\mathbf{c}_3)\phi_3 u_1\right) \\
                        & = \frac{u_1\phi_2}{\phi_1^2 u_2 u_3 - \phi_1\phi_2 u_1 u_2 + \phi_2\phi_3 u_1^2}  
                              \left(- L_{03}(\mathbf{c}_2)\phi_1\phi_3 u_1 u_2 - L_{02}(\mathbf{c}_3)\phi_1\phi_3 u_1 u_2  \right) \\
                        & = 0.
\end{split}
\end{equation*}  
We can similarly show that $\tilde{S}_2$ vanishes at $\mathbf{s},$ and thus, by Proposition \ref{prop:singlines}, $\mathbf{s}$ is the unique double point of the curve.
\end{proof}
Of course, describing the singularity in terms of the control points may not be optimal with respect to numerical stability if the double point lies far from the region of
interest, due to the denominator becoming small. However, information about the singularity is normally required only when it interferes with the region of interest of the 
curve. In these cases, the method performs very well.

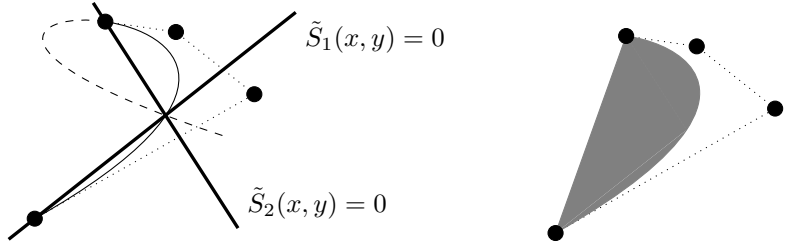
\begin{figure}
\begin{center}
       \begin{tikzpicture}[scale=3.3]
      \path (17/32, 19/24) coordinate (A0);
      \path (1/4, 5/6) coordinate (B0);
      \path (0, 2/3) coordinate (C0);
      \path (1, 1/3) coordinate (D0);
      \path (1/4, 0) coordinate (A1);
      \path (9/8, 1/2) coordinate (B1);
      \path (13/16, 3/4) coordinate (C1);
      \path (17/32, 19/24) coordinate (D1);
      \path (0.772, 0.414) coordinate (S);
      \path (0.146, -0.083) coordinate (A2);
      \path (1.29, 0.829) coordinate (SA2);
      \path (0.483, 0.867) coordinate (D2);
      \path (1.06, -0.038) coordinate (SD2);
      \draw[dotted] (A1) -- (B1);
      \draw[dotted] (B1) -- (C1);
      \draw[dotted] (C1) -- (D1);
      \draw[line width=5/4pt] (A2) -- (SA2);
      \draw[line width=5/4pt] (D2) -- (SD2);
      \draw[dashed] (A0) .. controls (B0) and (C0) .. (D0);
      \draw (A1) .. controls (B1) and (C1) .. (D1);
      \draw[fill] (A1) circle (0.03);
      \draw[fill] (B1) circle (0.03);
      \draw[fill] (C1) circle (0.03);
      \draw[fill] (D1) circle (0.03);
      \draw (SA2) node[below right] {$\tilde{S}_1(x,y)=0$};
      \draw (SD2) node[above right] {$\tilde{S}_2(x,y)=0$};
      \end{tikzpicture} \hspace*{1cm}
      \begin{tikzpicture}[scale=3.3]
      \path (1/4, 0) coordinate (A0);
      \path (9/8, 1/2) coordinate (B0);
      \path (13/16, 3/4) coordinate (C0);
      \path (17/32, 19/24) coordinate (D0);
      \path (1/4, 0) coordinate (A1);
      \path (0.538466978587320, 0.164838273478468) coordinate (B1);
      \path (0.697868589258669, 0.302504890553575) coordinate (C1);
      \path (0.771874389072580, 0.414492827535012) coordinate (D1);
      \path (0.771874389072580, 0.414492827535012) coordinate (A2);
      \path (0.922348615202335, 0.642195226475667) coordinate (B2);
      \path (0.719778471168361, 0.763736522789872) coordinate (C2);
      \path (17/32, 19/24) coordinate (D2);
      \path (0.771874389072580, 0.414492827535012) coordinate (S);
      \draw[dotted] (A0) -- (B0);
      \draw[dotted] (B0) -- (C0);
      \draw[dotted] (C0) -- (D0);
      \draw[fill,color=gray] (A0) -- (D0) -- (S);
      \draw[fill,color=gray] (A1) .. controls (B1) and (C1) .. (D1);
      \draw[fill,color=gray] (A2) .. controls (B2) and (C2) .. (D2);
      \draw[fill] (A0) circle (0.03);
      \draw[fill] (B0) circle (0.03);
      \draw[fill] (C0) circle (0.03);
      \draw[fill] (D0) circle (0.03);
      \end{tikzpicture} 
   \caption{Two lines $\tilde{S}_1$ and $\tilde{S}_2$ (in bold, left), each defined by a linear combination of $L_{ij}$s, which intersect at the singularity. For an unwanted
   self-intersection (as pictured), the segments of the curves restricted to the quadrants defined by the two lines can be rendered separately, thus visually eliminating 
   the singularity (right).}
   \label{fig:singlines}
\end{center}
\end{figure}

\subsection{Parametric identities}

In complement to the previous section, it is also possible to derive formulas for the parameter values of the singularity in terms of the quantities $\phi_1, \phi_2$ and 
$\phi_3.$ For compactness of notation we first make the following definitions:
\begin{defn}
\[
\Phi_1 = \phi_1 u_2 u_3, \quad \Phi_2 = \phi_2 u_0 u_1, \quad \Phi_3 = \phi_3 u_1 u_2.
\]
\end{defn}

\begin{prop}
Suppose we are given a non-degenerate rational cubic parametric curve with no three control points collinear. Then the parameter values of the double point are given by the 
solutions to the quadratic equation $r(t)=0,$ where 
\begin{equation}\label{eq:r}
r(t) = \Phi_1  t^2 + \Phi_3 t (1-t) + \Phi_2 (1-t)^2.
\end{equation}
\end{prop}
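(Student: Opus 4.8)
The plan is to pull the parametrisation back through the two lines $\tilde{S}_1$ and $\tilde{S}_2$ of Proposition~\ref{prop:singlines}, both of which pass through the double point. The enabling observation is that every linear form $L_{ij}$ restricts to the curve in a transparent way. Writing the numerator and denominator of~(\ref{eq:cubbez}) as $\mathbf{P}(t)=\sum_{k=0}^{3}\mathbf{c}_k u_k(1-t)^{3-k}t^k$ and $w(t)=\sum_{k=0}^{3}u_k(1-t)^{3-k}t^k$, the affine linearity of $L_{ij}$ combined with the identity $L_{ij}(\mathbf{c}_k)=\lambda_{kij}$ (read off directly from Definitions~\ref{def:L} and~\ref{def:lambda}) gives
\[
L_{ij}(\mathbf{p}(t))=\frac{1}{w(t)}\sum_{k=0}^{3}u_k\lambda_{kij}(1-t)^{3-k}t^k.
\]
Because $\lambda_{kij}=0$ for $k\in\{i,j\}$, only two terms survive in each sum, and the resulting cubic numerators automatically carry the endpoint factors $t$ or $(1-t)$.

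First I would apply this to $\tilde{S}_1=u_2\phi_1 L_{02}-u_1\phi_3 L_{03}$. Evaluating the relevant signed areas, namely $\lambda_{102}=-\lambda_3$, $\lambda_{302}=\lambda_1$, $\lambda_{103}=\lambda_2$ and $\lambda_{203}=-\lambda_1$, shows that the numerator of $\tilde{S}_1(\mathbf{p}(t))$ has a common factor $t$, as it must since $\tilde{S}_1$ vanishes at $\mathbf{c}_0=\mathbf{p}(0)$. Expressing the remaining quadratic factor $Q_1(t)$ in the basis $\{t^2,\,t(1-t),\,(1-t)^2\}$, the coefficients of $t^2$ and $t(1-t)$ come out immediately as $\lambda_1\Phi_1$ and $\lambda_1\Phi_3$ respectively.

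The crux is the coefficient of $(1-t)^2$, which I compute to be $-u_1\bigl(u_2\lambda_3\phi_1+u_1\lambda_2\phi_3\bigr)$. This equals $\lambda_1\Phi_2=u_0 u_1\lambda_1\phi_2$ exactly when $u_2\lambda_3\phi_1+u_0\lambda_1\phi_2+u_1\lambda_2\phi_3=0$, which is precisely the second identity of Proposition~\ref{prop:p1p2p3}. Invoking it yields $Q_1(t)=\lambda_1\,r(t)$, and hence $\tilde{S}_1(\mathbf{p}(t))=t\,\lambda_1\,r(t)/w(t)$. I expect essentially all the difficulty of the argument to live in this one step: the sign bookkeeping for the $\lambda_{kij}$ and the correct appeal to Proposition~\ref{prop:p1p2p3}. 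Everything else is mechanical expansion.

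To finish, recall from Proposition~\ref{prop:singlines} that the double point $\mathbf{s}$ lies on both $\tilde{S}_1=0$ and $\tilde{S}_2=0$. The analogous computation for $\tilde{S}_2=u_1\phi_2 L_{13}-u_2\phi_3 L_{03}$ factors off $(1-t)$ at $\mathbf{c}_3=\mathbf{p}(1)$ and, by the first identity of Proposition~\ref{prop:p1p2p3}, gives $\tilde{S}_2(\mathbf{p}(t))=-(1-t)\,\lambda_2\,r(t)/w(t)$. Any parameter $t^{\ast}$ with $\mathbf{p}(t^{\ast})=\mathbf{s}$ therefore makes both numerators $t^{\ast}\lambda_1 r(t^{\ast})$ and $-(1-t^{\ast})\lambda_2 r(t^{\ast})$ vanish. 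Since $\lambda_1,\lambda_2\neq0$ (no three control points are collinear) and the factors $t$ and $1-t$ share no common zero, $t^{\ast}$ must be a root of $r$. As $r$ is quadratic with exactly two roots, the two parameter values of the double point are precisely the solutions of $r(t)=0$, uniformly covering the crunode (real, distinct), cusp (double) and acnode (complex-conjugate) cases.
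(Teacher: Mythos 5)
Your proof is correct and takes essentially the same route as the paper's: the paper likewise inserts $\mathbf{p}(t)$ into $\tilde{S}_1$ (using $\tilde{S}_2$ as a fallback when it degenerates), factors out the denominator $w(t)$ and the root $t=0$, and identifies the remaining quadratic factor with $r(t)$. Your explicit pullbacks $\tilde{S}_1(\mathbf{p}(t)) = t\,\lambda_1 r(t)/w(t)$ and $\tilde{S}_2(\mathbf{p}(t)) = -(1-t)\,\lambda_2 r(t)/w(t)$, obtained via the line-composition formula and the two identities of Proposition \ref{prop:p1p2p3}, merely spell out the factorization that the paper asserts without detail, and they check out.
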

\begin{proof}By inserting the parametric form $\mathbf{p}(t),$ into $\tilde{S}_1$ (and factoring out the denominator), we obtain a cubic polynomial whose three roots correspond 
to the two parameter values $t_1$ and $t_2$ of the double point, and the parameter $t=0.$ After factoring out $t$ from the polynomial we obtain 
$r(t)= \tilde{S}_1(\mathbf{p}(t))/t,$ as given above. In the case that $\phi_2=0,$ $\tilde{S}_1$ degenerates, however, we can simply follow a similar proof, using $\tilde{S}_2$ 
instead.
\end{proof}

The polynomial $r(t)$ is strictly quadratic except in the case when $\Phi_1+\Phi_2-\Phi_3=0.$ In the non-degenerate case, this condition corresponds to when at least one of the parameter values of the singularity is infinite. When $r(t)$ is quadratic, it can be solved explicitly, to give the parameters $t_1$ and $t_2$ of the double point as
\begin{equation}\label{eq:parasols}
\frac{2 \Phi_2 - \Phi_3 \pm \sqrt{ \Phi_3^2 - 4 \Phi_1 \Phi_2}}{2 (\Phi_1 + \Phi_2 - \Phi_3)}.
\end{equation}
In particular, we have the discriminant $\Delta = \Phi_3^2 - 4 \Phi_1 \Phi_2,$ which defines whether the curve has a self-intersection 
($\Delta>0$), a cusp ($\Delta=0$), or an acnode ($\Delta<0$). 
This appears to be similar to the discriminant described by Stone and DeRose in \cite{stone_1989}. 

The lack of symmetry in Formula (\ref{eq:parasols}) above is due to Bernstein polynomials being defined over the interval $[0,1],$ as opposed to an interval which is symmetric about $0,$ such as $[-1,1].$

\subsection{Detecting Unwanted Self-intersections}\label{ssec:unwanted}

The parameter values of the double point can occur in several ways. If the two parameters are real and distinct, then a self-intersection occurs; in the case that the 
two parameters are equal, we have a cusp; and parameter pairs which occur as complex conjugates give rise to isolated singular points, or acnodes. 

When the curve is given in rational B\'ezier form, the region of interest is the parameter interval $[0,1].$ If the parameters of the singularity both lie within the interval 
$[0,1],$ the self-intersection is normally an intended product of the designer. If the parameters both lie outside the interval $[0,1],$ this means there will be no 
singularities in the region of interest; again, this is normally intended by the designer. The case where one parameter lies within the interval and one lies outside 
is what we term an \emph{unwanted self-intersection} or \emph{unwanted singularity}. Figure \ref{fig:unwanted} shows the three cases. When the parametric representation 
is used the unwanted case is not normally distinguished, since the curve is only plotted in the region of interest in the parameter domain. However, using implicit 
representations, it is more difficult to avoid plotting the curve without the unwanted branch and self-intersection, since a 2D region of rendering must be chosen.

\begin{figure}
 \begin{center}
      \subfloat{
      \begin{tikzpicture}[scale=1.5]
      \path (1, 0) coordinate (A0);
      \path (5/2, 3/2)coordinate (B0);
      \path (0, 3/2) coordinate (C0);
      \path (3/2, 0) coordinate (D0);
      \draw[dotted] (A0) -- (B0);
      \draw[dotted] (B0) -- (C0);
      \draw[dotted] (C0) -- (D0);
      \draw (A0) .. controls (B0) and (C0) .. (D0);
      \draw[fill] (A0) circle (0.05);
      \draw[fill] (B0) circle (0.05);
      \draw[fill] (C0) circle (0.05);
      \draw[fill] (D0) circle (0.05);
      \draw (5/4,-1/5) node[below] {(a) Singular segment};
      \end{tikzpicture}}
      \subfloat{
      \begin{tikzpicture}[scale=1.5]
      \path (371/250, 18/25) coordinate (A2);
      \path (202/125, 63/50) coordinate (B2);
      \path (221/250, 63/50) coordinate (C2);
      \path (127/125, 18/25) coordinate (D2);
      \draw[dotted] (A2) -- (B2);
      \draw[dotted] (B2) -- (C2);
      \draw[dotted] (C2) -- (D2);
      \draw[dashed] (A0) .. controls (B0) and (C0) .. (D0);
      \draw (A2) .. controls (B2) and (C2) .. (D2);
      \draw[fill] (A2) circle (0.05);
      \draw[fill] (B2) circle (0.05);
      \draw[fill] (C2) circle (0.05);
      \draw[fill] (D2) circle (0.05);
      \draw (5/4,-1/5) node[below] {(b) Non-singular segment};
      \end{tikzpicture}} 
      \subfloat{
      \begin{tikzpicture}[scale=1.5]
      \path (1, 0) coordinate (A3);
      \path (7/4, 3/4) coordinate (B3);
      \path (3/2, 9/8) coordinate (C3);
      \path (5/4, 9/8) coordinate (D3);
      \draw[dotted] (A3) -- (B3);
      \draw[dotted] (B3) -- (C3);
      \draw[dotted] (C3) -- (D3);
      \draw[dashed] (A0) .. controls (B0) and (C0) .. (D0);
      \draw (A3) .. controls (B3) and (C3) .. (D3);
      \draw[fill] (A3) circle (0.05);
      \draw[fill] (B3) circle (0.05);
      \draw[fill] (C3) circle (0.05);
      \draw[fill] (D3) circle (0.05);
      \draw (5/4,-1/5) node[below] {(c) Unwanted singularity};
      \end{tikzpicture}}
 \end{center}
 \caption{Three B\'ezier representations of the same rational cubic curve. The solid part of the curve represents the region of interest and dashed part 
 corresponds to parameter values outside $[0,1].$}
 \label{fig:unwanted}
\end{figure}
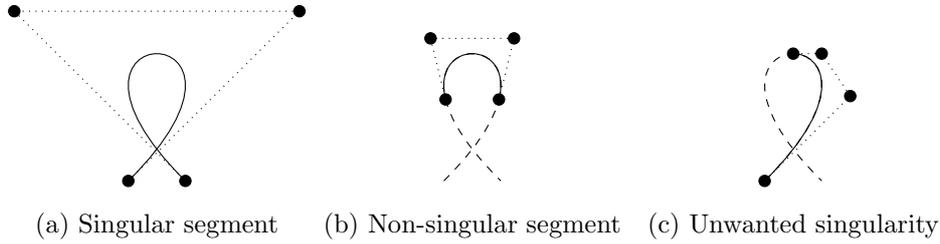

It is possible to detect the presence of unwanted singularities by directly analysing the coefficients $\Phi_1$ and $\Phi_2.$ In \cite{floater_1995}, a method is presented 
to detect unwanted singularities in special cases, which essentially correspond rational cubic B\'ezier curves which form simple arches \cite{pfeifle_2012}. The following 
proposition gives a condition to detect unwanted singularities in \emph{all} non-degenerate cases. We make the condition that a singularity with infinite parameter values is not classified as unwanted so that we can assume that $\Phi_1+\Phi_2-\Phi_3\neq0.$

\begin{prop}\label{prop:unwanted}
Let the control points and non-zero weights of a non-degenerate rational cubic B\'ezier curve be given, such that no three control points are collinear. Then there 
exists an unwanted singularity in the region of interest if and only if $\Phi_1\Phi_2 < 0.$
\end{prop}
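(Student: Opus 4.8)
The plan is to reduce the statement to a sign comparison between the values of the quadratic $r$ of equation (\ref{eq:r}) at the two endpoints of the region of interest. First I would evaluate $r$ at $t=0$ and $t=1$: substituting into (\ref{eq:r}) and observing that the cross terms vanish gives the two clean identities $r(0) = \Phi_2$ and $r(1) = \Phi_1$. Expanding (\ref{eq:r}) in powers of $t$ also exhibits the leading coefficient as $\Phi_1+\Phi_2-\Phi_3$, which is nonzero by the standing convention, so that $r$ is a genuine quadratic and neither double-point parameter is at infinity.

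Next I would factor $r(t) = (\Phi_1+\Phi_2-\Phi_3)(t-t_1)(t-t_2)$, where $t_1,t_2$ are the two double-point parameters, and form the product $r(0)\,r(1) = (\Phi_1+\Phi_2-\Phi_3)^2\, t_1 t_2 (1-t_1)(1-t_2) = \Phi_1\Phi_2$. Since the squared factor is positive, the sign of $\Phi_1\Phi_2$ coincides with the sign of $[t_1(1-t_1)]\,[t_2(1-t_2)]$. As the map $t\mapsto t(1-t)$ is positive exactly on $(0,1)$ and negative outside $[0,1]$, each factor records precisely whether its parameter falls inside or outside the region of interest, which is the bridge between the algebraic condition and the geometric one.

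The two directions then follow quickly. For the ``if'' part, $\Phi_1\Phi_2<0$ forces the roots to be real, since a complex-conjugate pair would give $t_1 t_2 = |t_1|^2\ge 0$ and $(1-t_1)(1-t_2)=|1-t_1|^2\ge 0$, whence $\Phi_1\Phi_2\ge 0$; with real roots, $[t_1(1-t_1)]\,[t_2(1-t_2)]<0$ means exactly one parameter lies in $(0,1)$ and the other strictly outside $[0,1]$, which is by definition an unwanted singularity. For the ``only if'' part, an unwanted singularity has one real parameter inside and one outside, so one factor is positive and one negative, forcing $\Phi_1\Phi_2<0$.

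The main obstacle I anticipate is not the algebra but the bookkeeping at the boundary. The clean dichotomy above uses the open interval, whereas a parameter landing exactly on $0$ or $1$ corresponds, via $r(0)=\Phi_2$ and $r(1)=\Phi_1$, to $\Phi_2=0$ or $\Phi_1=0$, i.e.\ to a singularity sitting at an endpoint $\mathbf{c}_0$ or $\mathbf{c}_3$ already isolated in Proposition \ref{prop:endpoints}. Such cases yield $\Phi_1\Phi_2=0$ rather than a strict negative and are therefore consistently excluded from the unwanted classification, while the convention $\Phi_1+\Phi_2-\Phi_3\neq0$ disposes of the remaining degenerate possibility of a parameter at infinity. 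Checking that these boundary conventions align exactly with the stated definition of ``unwanted'' is the only genuinely delicate point.
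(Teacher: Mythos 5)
Your proof is correct, and it takes a genuinely different route from the paper's. Both arguments start from the same observations that $r(0)=\Phi_2$ and $r(1)=\Phi_1$, but from there the paper stays inside standard Bernstein-basis reasoning: it splits into cases according to the signs of $\Phi_1,\Phi_2$ (and $\Phi_3$), and invokes the discriminant, the intermediate value theorem, the asymptotic sign of a quadratic, and the variation diminishing property to place the roots relative to $[0,1]$. You instead compress the whole statement into the single identity
\[
\Phi_1\Phi_2 \;=\; r(0)\,r(1)\;=\;(\Phi_1+\Phi_2-\Phi_3)^2\,\bigl[t_1(1-t_1)\bigr]\bigl[t_2(1-t_2)\bigr],
\]
so that the sign of $\Phi_1\Phi_2$ is literally the sign pattern of $t\mapsto t(1-t)$ at the two double-point parameters. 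This buys uniformity: complex-conjugate parameters, cusps ($t_1=t_2$), and all sign configurations of $\Phi_3$ are disposed of in one stroke (each gives a nonnegative product), whereas the paper needs a separate, somewhat terse case analysis to show that same-sign $\Phi_1,\Phi_2$ forces both roots inside or both outside $[0,1]$. Your treatment of the boundary is also more explicit than the paper's: a parameter landing exactly at $t=0$ or $t=1$ gives $\Phi_2=0$ or $\Phi_1=0$, hence $\Phi_1\Phi_2=0$, and such endpoint singularities (already isolated in Proposition \ref{prop:endpoints}) must be excluded from the unwanted classification for the strict inequality to be an equivalence --- a point the paper's ``clearly'' glosses over. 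What the paper's route buys in exchange is that it never leaves the real Bernstein framework familiar to its CAGD audience and never needs the factorization of $r$ over $\mathbb{C}$; but as a piece of mathematics your argument is the cleaner of the two.
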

\begin{proof}
This can be proved by observing that $r(t)$ is a quadratic polynomial in Bernstein form, with coefficients $\Phi_1,$ $\Phi_3/2$ and $\Phi_2,$ and by the properties of 
Bernstein polynomials. 

($\impliedby$) Assume first that $\Phi_1$ and $\Phi_2$ have opposite signs. Then, by observing the discriminant, we always 
have two real roots. Since $r(0)=\Phi_2$ and $r(1) = \Phi_1,$ we know there is a root in $[0,1]$ by the intermediate value theorem. We also know that there must be a 
root outside $[0,1],$ because, since it is quadratic, $r(t)$ must have the same sign as $r(-t)$ asymptotically. Thus an unwanted self-intersection occurs (c.f. 
Figure \ref{fig:unwanted}(c)).

Assume now that $\Phi_1$ and $\Phi_2$ have the same sign. Since we are only interested in cases of self-intersection, where we have two distinct real roots, 
we consider the two cases, $\Phi_3>\sqrt{4\Phi_1\Phi_2}$ and $\Phi_3<-\sqrt{4\Phi_1\Phi_2}.$ In one of the cases, all of the Bernstein coefficients of $r(t)$ have the 
same sign, so, by the variation diminishing property, both roots must be outside $[0,1].$ In the other case, $\Phi_3$ has opposite sign to $\Phi_1$ and $\Phi_2,$ so the 
derivatives of $r$ at $t=0$ and $t=1$ must have opposite sign. Again, by the intermediate value theorem, $r'(t)=0$ for some $t\in[0,1],$ thus both roots of $r(t)$ must 
be in $[0,1].$ Thus either we have a singular or non-singular segment (c.f. Figure \ref{fig:unwanted}(a) and (b)).

($\implies$) Assume first that $r(t)$ has one root in $[0,1]$ and one outside $[0,1].$ Then clearly, $r(0)=\Phi_2$ must have opposite sign to $r(1)=\Phi_1.$ 

Assume that both roots are in $[0,1].$ Then $r(0)=\Phi_2$ must have the same sign as $r(1)=\Phi_1.$ This is similarly the case if both roots are outside $[0,1].$
\end{proof}

Of course, in the standard case that all weights are positive, we can use the simplified condition $\phi_1\phi_2<0$ in Proposition \ref{prop:unwanted}. 

In \cite{blinn_2005}, Loop and Blinn use subdivision at the parameter values of the singularity in order to remove the unwanted branch. Their method involves solving a quadratic polynomial for the parameter values of the self-intersection and then running the de Casteljau algorithm for the subdivision. The results of this section suggest an alternative method to remove the singularity. If, according to Proposition \ref{prop:unwanted}, we detect an unwanted singularity, we can use the lines defined in Proposition 
\ref{prop:singlines} to render non-singular segments of the curve. Such an approach is pictured in Figure \ref{fig:singlines} (right). This would appear to be advantageous since 
we avoid the necessity of dealing with two separate curves, and also avoid computing the parameter values of the singularity. Additionally, exact rational arithmetic can be used since we do not need to solve any polynomial equations.

\section{Degeneration to conic sections}\label{sec:conics}

\subsection{Conditions for conic degeneration}

In this section we describe necessary and sufficient conditions for the degeneration of the rational cubic curve to a conic section. Some of the results of this section are similar to those of Wang and Wang \cite{wang_1992} and we use Theorem 1 from that paper in the proof of Theorem \ref{thm:conic} below\footnote{In the notation of \cite{wang_1992} the quantities $(S_i)_{i=0}^3$ are equivalent to the quantities $(\lambda_i)_{i=0}^3$ in this paper. In addition, it should be noted that while the result of Theorem 2 in \cite{wang_1992} is restricted to strictly positive weights, the results of Theorems 1 and 3 are general for any non-zero weights.}. 
We first state a Lemma which holds for any rational cubic B\'ezier curve $\mathbf{p},$ including when it degenerates to 
a conic.

\begin{lemma}\label{lem:degen}
Let $q_2(x,y) = u_0 u_3 L_{03}(x,y)^2 - u_1 u_2 L_{01}(x,y)L_{23}(x,y).$ Then for any rational cubic B\'ezier curve $\mathbf{p}(t),$ given by (\ref{eq:cubbez}), we have 
\[
q_2(\mathbf{p}(t)) = \frac{t^2(1-t)^2}{w(t)^2} r(t),
\]
where $r(t)$ is given by (\ref{eq:r}).
\end{lemma}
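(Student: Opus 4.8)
The plan is to evaluate the six quantities $q_2$ is built from directly along the curve, by exploiting the fact that each $L_{ij}$ is an \emph{affine} function. Writing $\mathbf{p}(t)=\big(N_0(t)/w(t),\,N_1(t)/w(t)\big)$, where by (\ref{eq:cubbez}) we have $N_0=\sum_k c_{k,0}u_k\beta_k$, $N_1=\sum_k c_{k,1}u_k\beta_k$ and $w=\sum_k u_k\beta_k$ with $\beta_k(t)=(1-t)^{3-k}t^k$, I would first clear the common denominator inside the determinant of Definition \ref{def:L}. Multiplying the top row of $L_{ij}(\mathbf{p}(t))$ through by $w(t)$ and using multilinearity of the determinant in that row gives the central identity
\[
w(t)\,L_{ij}(\mathbf{p}(t)) \;=\; \sum_{k=0}^{3} u_k\,\beta_k(t)\,\lambda_{kij},
\]
in which every term with $k\in\{i,j\}$ vanishes since $\lambda_{kij}=0$, so only two monomials survive for each line.

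Next I would specialise this to the three lines occurring in $q_2$, namely $L_{03}$, $L_{01}$ and $L_{23}$. Using the alternating property of $\lambda_{ijk}$ to reduce each surviving $\lambda_{kij}$ to $\pm\lambda_m$ via Definition \ref{def:lambda}, and pulling out the common powers of $t$ and $1-t$, I expect to obtain
\begin{align*}
w(t)\,L_{03}(\mathbf{p}(t)) &= (1-t)t\big[u_1\lambda_2(1-t)-u_2\lambda_1 t\big], \\
w(t)\,L_{01}(\mathbf{p}(t)) &= t^2\big[u_2\lambda_3(1-t)-u_3\lambda_2 t\big], \\
w(t)\,L_{23}(\mathbf{p}(t)) &= (1-t)^2\big[u_0\lambda_1(1-t)-u_1\lambda_0 t\big].
\end{align*}

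I would then form $w(t)^2\,q_2(\mathbf{p}(t))=u_0u_3\big(wL_{03}\big)^2-u_1u_2\big(wL_{01}\big)\big(wL_{23}\big)$. Both products carry a common factor $t^2(1-t)^2$, which I pull out, leaving a quadratic form in $(1-t)$ and $t$. Expanding the two linear brackets and collecting the coefficients of $(1-t)^2$, $(1-t)t$ and $t^2$, I expect them to simplify, respectively, to $u_0u_1\phi_2=\Phi_2$, to $u_1u_2\phi_3=\Phi_3$, and to $u_2u_3\phi_1=\Phi_1$, where $\phi_1,\phi_2,\phi_3$ are exactly the quantities of Definition \ref{defn:phis}. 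The bracket therefore equals $\Phi_1 t^2+\Phi_3 t(1-t)+\Phi_2(1-t)^2=r(t)$ by (\ref{eq:r}), and dividing through by $w(t)^2$ yields the claim.

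The only real obstacle is bookkeeping rather than insight: one must keep careful track of the permutation signs when rewriting $\lambda_{kij}$ as $\pm\lambda_m$, and then verify that the three collected coefficients genuinely coincide with $\phi_2,\phi_3,\phi_1$. This last matching is the crux — it is precisely the point where the otherwise opaque definitions of $\phi_1,\phi_2,\phi_3$ are seen to arise naturally as the Bernstein coefficients of $q_2$ restricted to the curve — but it is a finite, mechanical expansion once the three $w\,L_{ij}$ formulas are in hand.
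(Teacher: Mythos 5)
Your proposal is correct and takes essentially the same route as the paper: your central identity $w(t)\,L_{ij}(\mathbf{p}(t))=\sum_{k\neq i,j}\lambda_{ijk}u_k\beta_k(t)$ is exactly the paper's Lemma \ref{lem:linecomp}, your three specialized line formulas coincide with (\ref{eq:t1t}), and the paper likewise factors out $t^2(1-t)^2$ and identifies the remaining Bernstein coefficients as $a_0=\Phi_2$, $a_1=\tfrac{1}{2}\Phi_3$ (appearing as $2a_1t(1-t)$) and $a_2=\Phi_1$. The coefficient matching you defer as ``mechanical'' is precisely the finite expansion the paper records, so nothing is missing.
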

We defer the proof of this lemma to \ref{sec:proof}. The following Theorem defines necessary and sufficient conditions for conic degeneration of the rational cubic curve.
\begin{thm}\label{thm:conic}
Suppose all the weights $(w_i)_{i=0}^3$ are non-zero and no three of the control points are collinear. Then the rational cubic B\'ezier curve degenerates to a conic if and only if $\phi_1=0$ and $\phi_2=0.$
\end{thm}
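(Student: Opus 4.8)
The plan is to prove the two directions using Lemma~\ref{lem:degen} as the central tool. The curve $\mathbf{p}(t)$ degenerates to a conic precisely when it satisfies some nonzero polynomial equation of total degree two. My strategy is to relate such a quadratic vanishing condition to the auxiliary polynomial $q_2(x,y)=u_0 u_3 L_{03}^2 - u_1 u_2 L_{01}L_{23}$, whose restriction to the curve is controlled by $r(t)$ through the lemma.

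For the reverse direction ($\phi_1=0$ and $\phi_2=0 \implies$ conic), I would first observe that when $\phi_1=\phi_2=0$, the coefficients $\Phi_1=\phi_1 u_2 u_3$ and $\Phi_2=\phi_2 u_0 u_1$ both vanish, so from the definition in (\ref{eq:r}) we have $r(t)=\Phi_3 t(1-t)$. Substituting into Lemma~\ref{lem:degen} gives
\[
q_2(\mathbf{p}(t)) = \frac{t^2(1-t)^2}{w(t)^2}\,\Phi_3\, t(1-t),
\]
which does \emph{not} vanish identically, so this alone is not enough. Instead I would examine the full implicit equation $q$ via its compact form (\ref{eq:qexplicit}): using (\ref{eq:phicoefs}), setting $\phi_1=\phi_2=0$ forces $b_1=b_2=0$, leaving $q = \phi_3\left(u_1 u_2 \lambda_1 \lambda_2 K_0 + u_0 u_3 \lambda_0 \lambda_3 K_3\right)$. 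The plan is then to show this remaining cubic polynomial factors, splitting off a linear factor corresponding to a line through the control points (plausibly $L_{03}$, given the structure of $K_0$ and $K_3$ and the relations in Proposition~\ref{prop:l0l1l2}), leaving a genuine conic factor that the curve must satisfy. Establishing this factorization, most likely by rewriting $K_0$ and $K_3$ using the line relations and extracting the common linear factor, is where the real work lies.

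For the forward direction (conic $\implies \phi_1=\phi_2=0$), I would invoke Theorem~1 of Wang and Wang~\cite{wang_1992}, as the footnote signals, to obtain their characterization of conic degeneration phrased in terms of the $\lambda_i=S_i$ and the weights. The task is then a translation exercise: showing that their condition is algebraically equivalent to the simultaneous vanishing of $\phi_1=u_0 u_2 \lambda_1^2 - u_1^2 \lambda_0 \lambda_2$ and $\phi_2=u_1 u_3 \lambda_2^2 - u_2^2 \lambda_1 \lambda_3$. Alternatively, and more self-containedly, I would argue that if $\mathbf{p}$ lies on a conic then $q_2(\mathbf{p}(t))$ (being a quadratic evaluated on a degree-two curve) must be proportional to that conic restricted to $\mathbf{p}$; combined with Lemma~\ref{lem:degen}, forcing $r(t)$ to have the right multiplicity structure at $t=0$ and $t=1$ pins down $\Phi_1=\Phi_2=0$, hence $\phi_1=\phi_2=0$ since the weights are nonzero.

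The main obstacle I anticipate is the reverse direction: verifying that $q = \phi_3(u_1 u_2 \lambda_1 \lambda_2 K_0 + u_0 u_3 \lambda_0 \lambda_3 K_3)$ actually factors into a line times a conic. This requires carefully exploiting the determinant identities among the $\lambda_i$ and the line relations of Proposition~\ref{prop:l0l1l2} to expose the common linear factor; it is a purely algebraic manipulation but an unavoidably intricate one, and getting the bookkeeping of signs and indices right is the delicate part. The forward direction, by contrast, should follow relatively cleanly once the cited external theorem is brought in and matched against the $\phi_i$ definitions.
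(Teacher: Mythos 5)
Your forward direction matches the paper: it likewise invokes Theorem 1 of Wang and Wang and rearranges their ratio condition into $\phi_1=\phi_2=0$. The gap is in your reverse direction, and it is caused by overlooking Proposition \ref{prop:p1p2p3}. Setting $\phi_1=\phi_2=0$ in either identity of that proposition leaves $u_2\lambda_1\phi_3=0$ (respectively $u_1\lambda_2\phi_3=0$); since the weights are non-zero and no three control points are collinear, $u_2\neq 0$ and $\lambda_1\neq 0$, hence $\phi_3=0$ as well. Consequently $\Phi_3=\phi_3u_1u_2=0$, so $r(t)\equiv 0$, and your intermediate claim that $q_2(\mathbf{p}(t))=\frac{t^2(1-t)^2}{w(t)^2}\,\Phi_3\,t(1-t)$ ``does not vanish identically'' is false. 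Once $r\equiv 0$, Lemma \ref{lem:degen} gives $q_2(\mathbf{p}(t))\equiv 0$, i.e.\ the curve lies on the zero set of the quadratic $q_2$ (which is not the zero polynomial: $u_0u_3L_{03}^2\equiv u_1u_2L_{01}L_{23}$ would force $L_{03}$ to be proportional to both $L_{01}$ and $L_{23}$, contradicting non-collinearity), and the proof is complete. This is exactly the paper's argument; the ``lemma as central tool'' instinct was right, you just stopped one step short of it.

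Your fallback plan --- showing that $q=\phi_3\left(u_1u_2\lambda_1\lambda_2K_0+u_0u_3\lambda_0\lambda_3K_3\right)$ factors as a line times a conic --- cannot succeed, because by the observation above $\phi_3=0$, so this polynomial is identically zero. (This is precisely statement (3) of the corollary following the theorem: when the cubic degenerates to a conic, $q$ vanishes identically, which is exactly why the paper must introduce the separate representation $q_2$ in Corollary \ref{cor:conicequation}.) So the step you flagged as ``where the real work lies'' is work on a vacuous object; the actual missing ingredient was the one-line application of Proposition \ref{prop:p1p2p3}.
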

\begin{proof}
Assume that the rational cubic curve degenerates to a conic section. Then by Theorem 1 in 
\cite{wang_1992} we have
\[
\frac{u_0\lambda_1}{u_1\lambda_0} = \frac{u_1\lambda_2}{u_2\lambda_1} = \frac{u_2\lambda_3}{u_3\lambda_2}.
\]
By a simple rearrangement of these equations it is clear that we have $\phi_1=0$ and $\phi_2=0.$ 

Assume now that $\phi_1=0$ and $\phi_2=0.$ Then, by Proposition 4, we have $\phi_3=0$ and therefore $r(t)\equiv 0$ by definition. 
Therefore, by Lemma \ref{lem:degen}, we have $q_2(\mathbf{p}(t))\equiv 0,$ which shows that $q_2$ is the implicit representation of $\mathbf{p}.$ Since $q_2$ is 
a quadratic function, the rational cubic curve must degenerate to a conic. 
\end{proof}

A consequence of the previous propositions, and the linear independency of the basis functions (see \ref{sec:proof}), is as follows.
\begin{cor}
Suppose that no three control points are collinear and the weights are non-zero. Then the following three statements are equivalent:
\begin{enumerate}
\item the rational cubic curve degenerates to a conic,
\item the coefficients $(b_i)_{i=0}$ are all zero,
\item the implicit polynomial $q$ vanishes identically.
\end{enumerate}
\end{cor}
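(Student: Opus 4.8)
The plan is to prove the equivalence of the three statements by establishing a cycle of implications, or more directly by arguing that each pair is equivalent using the machinery already developed. The cleanest route is to show $(1)\Leftrightarrow(2)$ and $(2)\Leftrightarrow(3)$, since the transitivity then closes the chain. First I would handle $(2)\Rightarrow(3)$, which is immediate: if all $b_i=0$, then by the explicit form $q(x,y)=\sum_{i=0}^3 b_i K_i(x,y)$ from Theorem \ref{thm:main}, the polynomial $q$ is the zero polynomial, hence vanishes identically.

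The reverse direction $(3)\Rightarrow(2)$ is where the \emph{linear independence} of the basis functions $(K_i)_{i=0}^3$ is essential. The point is that $q$ vanishing identically means $\sum_{i=0}^3 b_i K_i \equiv 0$ as a polynomial identity; by Theorem \ref{thm:linindep} the $K_i$ are linearly independent (this uses the standing hypothesis that no three control points are collinear, which is assumed here), so all coefficients $b_i$ must vanish. This is the step I expect to lean on most heavily, and it is precisely why the corollary cites ``the linear independency of the basis functions.'' Without linear independence, $q\equiv 0$ would only give a linear relation among the $b_i$, not their individual vanishing.

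For $(1)\Leftrightarrow(2)$, I would connect conic degeneration to the vanishing of the coefficients through Theorem \ref{thm:conic} and the $\phi$-form of the coefficients in \eqref{eq:phicoefs}. By Theorem \ref{thm:conic}, degeneration to a conic is equivalent to $\phi_1=0$ and $\phi_2=0$, and then Proposition \ref{prop:p1p2p3} forces $\phi_3=0$ as well. Examining \eqref{eq:phicoefs}, each $b_i$ is a product involving one of the $\phi_j$ (with nonzero weight and area factors, guaranteed by the non-collinearity and non-zero weight hypotheses), so $\phi_1=\phi_2=\phi_3=0$ yields $b_0=b_1=b_2=b_3=0$. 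Conversely, if all $b_i=0$, then since the prefactors $u_iu_j\lambda_i\lambda_j$ are nonzero, we read off $\phi_3=0$ (from $b_0$ or $b_3$), $\phi_1=0$ (from $b_1$), and $\phi_2=0$ (from $b_2$); in particular $\phi_1=\phi_2=0$, so by Theorem \ref{thm:conic} the curve degenerates to a conic.

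The main obstacle, such as it is, lies in being careful about the nonvanishing of the auxiliary factors. All three equivalences hinge on the standing assumptions that no three control points are collinear (so each $\lambda_i\neq 0$) and that the weights are non-zero (so each $u_i\neq 0$); these guarantee that the prefactors in \eqref{eq:phicoefs} never vanish, allowing us to pass freely between the vanishing of the $b_i$ and the vanishing of the $\phi_j$. Once this bookkeeping is in place, the argument is a short chain of citations to Theorem \ref{thm:main}, Theorem \ref{thm:linindep}, Theorem \ref{thm:conic}, and Proposition \ref{prop:p1p2p3}, with no substantial computation required.
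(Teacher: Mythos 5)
Your proposal is correct and uses essentially the same ingredients as the paper's proof: Theorem \ref{thm:conic} together with Proposition \ref{prop:p1p2p3} to pass between conic degeneration and the vanishing of the $\phi_j$, the nonvanishing of the $u_i$ and $\lambda_i$ to pass between the $\phi_j$ and the $b_i$ via (\ref{eq:phicoefs}), and the linear independence of the $K_i$ for the direction starting from $q\equiv 0$. The only difference is organizational -- you prove the two biconditionals $(1)\Leftrightarrow(2)$ and $(2)\Leftrightarrow(3)$, while the paper closes the cycle $(1)\Rightarrow(2)\Rightarrow(3)\Rightarrow(1)$ -- which amounts to the same steps regrouped.
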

\begin{proof} \ 

$(1) \implies (2):$ By Theorem \ref{thm:conic} we have $\phi_1=\phi_2=0,$ and thus $\phi_3=0$ by Proposition \ref{prop:p1p2p3}. Then, by definition
$b_i=0$ for $i=0,1,2,3.$

$(2) \implies (3):$ Trivial.

$(3) \implies (1):$ By the linear independence of the basis functions, the coefficients must all be zero. Then, since the $\lambda_i$s and $u_i$s are 
non-zero, we have $\phi_1=\phi_2=0.$ The result follows from Theorem \ref{thm:conic}.

\end{proof}

Since the implicit polynomial $q$ vanishes identically when the rational cubic curve degenerates to a conic, we need an alternative definition to represent this case implicitly. Another immediate consequence of Theorem \ref{thm:conic} gives such a representation.
\begin{cor}\label{cor:conicequation}
If the rational cubic curve $\mathbf{p}(t)$ degenerates to a conic section then its implicit representation is given by the equation 
\[
q_2(x,y)=u_0 u_3 L_{03}(x,y)^2 - u_1 u_2 L_{01}(x,y)L_{23}(x,y)=0.
\]
\end{cor}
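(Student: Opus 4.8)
The plan is to treat Corollary~\ref{cor:conicequation} as an almost immediate consequence of Theorem~\ref{thm:conic} together with Lemma~\ref{lem:degen}, so the proof is a short deduction rather than a fresh computation. First I would invoke the hypothesis: the curve $\mathbf{p}(t)$ degenerates to a conic, the weights are non-zero, and no three control points are collinear. By Theorem~\ref{thm:conic} these hypotheses give $\phi_1=0$ and $\phi_2=0$, and then Proposition~\ref{prop:p1p2p3} forces $\phi_3=0$ as well.

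Next I would feed this back into the identity of Lemma~\ref{lem:degen}. Since $r(t)=\Phi_1 t^2+\Phi_3 t(1-t)+\Phi_2(1-t)^2$ and each $\Phi_i$ is a non-zero multiple of the corresponding $\phi_i$, the vanishing of all three $\phi_i$ makes $r(t)\equiv 0$. Lemma~\ref{lem:degen} then yields
\[
q_2(\mathbf{p}(t)) = \frac{t^2(1-t)^2}{w(t)^2}\, r(t) \equiv 0,
\]
so the quadratic $q_2$ vanishes along the entire curve. This is precisely the statement that $q_2(x,y)=0$ is an implicit equation satisfied by $\mathbf{p}(t)$.

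The one point requiring care is that vanishing of $q_2$ along the curve must actually \emph{define} the conic, not merely be a polynomial that happens to contain the curve in its zero set. Here I would argue that $q_2$ is not the zero polynomial: because no three control points are collinear, the linear forms $L_{03}$, $L_{01}$, $L_{23}$ are pairwise non-proportional, so $L_{03}^2$ and $L_{01}L_{23}$ are linearly independent quadratics, and since $u_0u_3\neq 0$ and $u_1u_2\neq 0$ the combination $q_2=u_0u_3 L_{03}^2-u_1u_2 L_{01}L_{23}$ cannot vanish identically. Thus $q_2$ is a genuine (nonzero) quadratic whose zero set contains the image of $\mathbf{p}$; since the image of a rational curve that degenerates to a conic is exactly that irreducible conic, the zero set of $q_2$ is the conic itself, giving the implicit representation as claimed.

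I do not expect any serious obstacle: the corollary is purely a matter of chaining the earlier results in the right order, with the only substantive observation being the non-vanishing of $q_2$, which follows directly from the standing non-collinearity and non-zero-weight assumptions. The author's own one-line remark preceding the corollary already signals that this is meant to be an ``immediate consequence,'' so the proof should remain brief.
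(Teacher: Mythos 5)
Your proposal is correct and follows essentially the same route as the paper: the paper's proof is exactly the chain Theorem~\ref{thm:conic} $\Rightarrow$ $\phi_1=\phi_2=0$ $\Rightarrow$ (via Proposition~\ref{prop:p1p2p3}) $\phi_3=0$ $\Rightarrow$ $r(t)\equiv 0$ $\Rightarrow$ (via Lemma~\ref{lem:degen}) $q_2(\mathbf{p}(t))\equiv 0$. Your additional check that $q_2$ is not the zero polynomial (using non-collinearity and non-zero weights) is a worthwhile point that the paper leaves implicit, but it does not change the underlying argument.
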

\begin{proof}
The proof is immediate from Lemma \ref{lem:degen} and Theorem \ref{thm:conic}.
\end{proof}

\subsection{Class conditions for rational cubic B\'ezier curve that degenerate to conics}

Class conditions for when the conic section represents an elliptic, a parabolic or a hyperbolic segment, are described by Wang and Wang in Theorem 3 of \cite{wang_1992}. 
For completion, we state the results of that theorem here. Define two quantities $Y_1$ and $Y_2$ such that 
\[
Y_1 = \frac{\Vert \mathbf{c}_1-\mathbf{c}_* \Vert}{\Vert \mathbf{c}_0-\mathbf{c}_1 \Vert}, \ 
Y_2 = \frac{\Vert \mathbf{c}_2-\mathbf{c}_* \Vert}{\Vert \mathbf{c}_3-\mathbf{c}_2 \Vert},
\]
where $\mathbf{c}_*$ is the point of intersection of the lines $L_{01}$ and $L_{32}.$ If $L_{01}$ and $L_{32}$ are parallel, we take $Y_1=Y_2=\infty.$
From these quantities we define a number $\eta^2$ such that
\[
\eta^2 = \frac{1}{4Y_1Y_2}.
\]
The theorem then says that the curve is an ellipse when $\eta^2<1,$ a parabola when $\eta^2=1$ or a hyperbola when $\eta^2>1.$ 
The proof of that result requires only that no three control points are collinear, and is hence general enough to encompass the case of curves with non-convex polygons and negative weights.


\section{Collinear points, zero weights and numerical stability}\label{sec:collinear}

\subsection{Collinear and coincident points}

As emphasised earlier, the method fails when any three of the four control points are collinear, including the case when two of the points are coincident. 
In the collinear case, the basis functions $(K_{i})_{i=0}^3$ are no longer linearly independent, and thus do not provide a basis for the curve. One remedy for this is to
subdivide the curve using the de Casteljau algorithm, and treat the two subdivided curves separately. In the case of coincident points, two subdivisions may be necessary to completely remove the collinearity. 

So far, all the methods in this paper have been independent of the parametric form, and it would be nice to find a method of dealing with these cases without 
resorting to parametric subdivision. Unfortunately, explicit methods which incorporate all collinear configurations seem to be rather more complicated than the 
simple methods presented in this paper. It appears that different configurations of points require different basis functions and coefficients. For example, end 
point coincidence ($\mathbf{c}_0=\mathbf{c}_3$), mid point coincidence ($\mathbf{c}_1=\mathbf{c}_2$) and mixed point coincidence ($\mathbf{c}_0=\mathbf{c}_1,$ or 
$\mathbf{c}_2=\mathbf{c}_3,$ or $\mathbf{c}_0=\mathbf{c}_2,$ or $\mathbf{c}_1=\mathbf{c}_3$) all appear to require separate treatment. In addition, there are 
also several cases of collinearity to consider. 

Experimentally, it appears that the following basis functions support all cases, including collinear and coincident control points. 
\[
L_{03}^3, \quad L_{01}L_{13}^2, \quad L_{02}^2L_{23}, \quad L_{01}L_{12}L_{23},  \quad L_{12}L_{03}^2, \quad L_{02}L_{23}, \quad L_{01}L_{13}.
\]
However, the explicit formula for the coefficients no longer holds, and the symmetries that were apparent earlier in this paper, appear to be lost. Due to the 
number of different cases and the comparative complexity of an explicit formula when trying to incorporate collinear points, we feel that parametric subdivision 
of the curve is currently the best option. However, this is the subject of ongoing research.

It may be noted that the test for cases of collinearity is not a difficult one. Since, during the implicitization we are using the $\lambda_i$s as coefficients, 
if one is computed to be zero, we can instruct the algorithm to deal with that case appropriately. 
In the case of running the algorithm in floating point precision, we would specify collinearity to within given tolerance.

\subsection{Zero weights}
 
In the preceding sections we have mostly assumed that the weights are non-zero. In the CAGD community, it is fairly common to 
define rational B\'ezier curves as having non-zero, or positive weights. Indeed, if we allow some of the weights of a rational cubic curve to be zero, the curve often 
degenerates to a conic or a line. In such cases it would be better to model the curve as a lower degree parametric curve. However, if either $w_1=0$ or $w_2=0$ 
(but not both), then the curve does not degenerate to a conic section. These cases were treated in Proposition \ref{prop:endpoints}. The implicit representation 
given by Theorem \ref{thm:main}, is still valid in these cases.

\subsection{Numerical issues} 

When the curve has control points which are close to collinear, or the curve is close to a degenerate conic, issues with numerical stability need to be considered. 
Heuristically, it seems that issues with numerical stability are not too great; the methods appear to work well even when these `close to degenerate' cases 
occur. However, when implementing in a given floating point precision, the tolerances required to define when a case is considered degenerate should be investigated 
further.

\section{Examples}\label{sec:examples}

In this section we consider several examples for which the computations can easily be done by hand. Figure \ref{fig:examples} shows five different curves with various properties. We also include cases which fail using the general method and need to be treated separately. 

\subsection{Three simple examples}

For simplicity, the control points of the first three examples of Figure \ref{fig:examples} are rearrangements of the points $(0,0)^T,$ $(0,1)^T,$ $(1,1)^T$ and $(1,0)^T.$ 
For each case we compute the quantities $(\lambda_i)_{i=0}^3$ and $(\phi_i)_{i=1}^3,$ the coefficients $(b_i)_{i=0}^3,$ 
the double point $\mathbf{s},$ and whether or not the curve exhibits an unwanted singularity. In this section we describe the first example in detail.

We have 
\begin{equation*}
\mathbf{c}_0 = \colvec{0 \\ 0}, \ \mathbf{c}_1 = \colvec{0 \\ 1}, \ \mathbf{c}_2 = \colvec{1 \\ 1}, \ \mathbf{c}_3 = \colvec{1 \\ 0}, \\
\end{equation*}
and
\begin{equation*}
w_0 = w_1 = w_2 = w_3 = 1.
\end{equation*}
The quantities $\lambda_i$ are thus all equal to $\pm1$ and we have $u_0=u_3=1$ and $u_1=u_2=3.$ Using the formula (\ref{eq:phicoefs}), we obtain
\[
\phi_1 = -6, \ \phi_2 = -6 \text{ and } \phi_3 = -8 
\]
and thus
\[
b_0 = 72, \ b_1 = -18, \ b_2 = -18, \ b_3 = 8.
\]
The double point of the curve can be computed from (\ref{eq:sreduced1}) as 
\begin{equation*}
\mathbf{s} = \frac{-324(1,1)^T+432(1,0)^T}{216} = \left(\frac{1}{2},-\frac{3}{2}\right).
\end{equation*}
Clearly the double point does not lie within the convex hull of the control points.  
For the implicit equation in a Cartesian system we can write
\[
q(x,y) = 72 x (y-1) (x-1) - 18 x (1-x-y)^2 - 18 (x-y)^2 (x-1) + 8 y^3.
\]
Since $\phi_1\phi_2>0,$ we know, by Proposition \ref{prop:unwanted}, that there is no unwanted branch in the region of interest.

\begin{table}
\centering
\begin{tabular}{|l||c|c|c|c|c|c|c|}
\hline 
       & $(\lambda_0,\lambda_1,\lambda_2,\lambda_3)$ & $(b_0,b_1,b_2,b_3)$  & $(\phi_1,\phi_2,\phi_3)$ & $\mathbf{s}$                 & $\phi_1\phi_2$ \\ \hline\hline
Ex. 1  & $(1,-1,1,-1)$                               &     $(72,-18,-18,8)$ &             $(-6,-6,-8)$ & $(\frac{1}{2},\frac{-3}{2})$ &     36         \\ \hline
Ex. 2  & $(-1,-1,1,1)$                               &     $(72,-36,-36,8)$ &             $(12,12,-8)$ & $(\frac{1}{2},\frac{3}{4})$  &     144        \\ \hline
Ex. 3  & $(-1,1,1,-1)$                               &     $(72,-36,-36,8)$ &              $(12,12,8)$ & $(\infty,\infty)$            &     144        \\ \hline
Ex. 4  & $(1/3,-1,1,-1/3)$                           &       $(0, 0, 0, 0)$ &                $(0,0,0)$ & n/a                          &     0          \\ \hline
Ex. 5  & $(-1/2,0,1,-1/2)$                           & $(0, 0, -9/2, 9/16)$ &            $(9/2,3,9/4)$ & n/a                          &     n/a        \\ \hline
\end{tabular}
\caption{The computed quantities for a range of curves pictured in Figure \ref{fig:examples}. All examples use the same weights $u_0=u_3=1$ and $u_1=u_2=3.$}
\label{tab:examples}
\end{table}

We summarize the quantities for the five examples of Figure \ref{fig:examples}, in Table \ref{tab:examples}. Note that in the third example we have a double point at 
infinity. In this case, the denominator in the formula (\ref{eq:sreduced1}) vanishes, as expected. 

\begin{figure}
\begin{center}
\subfloat{
\begin{tikzpicture}[scale=1.3]
\path (-0.7,0.5) coordinate (P);
\path (0,0) coordinate (A);
\path (0,0.05) coordinate (Ap);
\path (0,-0.05) coordinate (Am);
\path (0,1) coordinate (B);
\path (0,1+0.05) coordinate (Bp);
\path (0,1-0.05) coordinate (Bm);
\path (1,1) coordinate (C);
\path (1,1+0.05) coordinate (Cp);
\path (1,1-0.05) coordinate (Cm);
\path (1,0) coordinate (D);
\path (1,+0.05) coordinate (Dp);
\path (1,-0.05) coordinate (Dm);
\draw (A) -- (B);
\draw (B) -- (C);
\draw (C) -- (D);
\draw[fill] (A) circle (0.05);
\draw[fill] (B) circle (0.05);
\draw[fill] (C) circle (0.05);
\draw[fill] (D) circle (0.05);
\draw (-0.4,0.5) node[right] {$b_0$};
\end{tikzpicture} 
\begin{tikzpicture}[scale=1.3]
\draw (A) -- (B);
\draw (Bp) -- (Dp);
\draw (Bm) -- (Dm);
\draw[dotted] (B) -- (C);
\draw[dotted] (C) -- (D);
\draw[fill] (A) circle (0.05);
\draw[fill] (B) circle (0.05);
\draw[fill] (C) circle (0.05);
\draw[fill] (D) circle (0.05);
\draw (P) node[right] {$+$ $b_1$};
\end{tikzpicture}
\begin{tikzpicture}[scale=1.3]
\draw (Ap) -- (Cp);
\draw (Am) -- (Cm);
\draw (C) -- (D);
\draw[dotted] (A) -- (B);
\draw[dotted] (B) -- (C);
\draw[fill] (A) circle (0.05);
\draw[fill] (B) circle (0.05);
\draw[fill] (C) circle (0.05);
\draw[fill] (D) circle (0.05);
\draw (P) node[right] {$+$ $b_2$};
\end{tikzpicture} 
\begin{tikzpicture}[scale=1.3]
\draw (A) -- (D);
\draw (Ap) -- (Dp);
\draw (Am) -- (Dm);
\draw[dotted] (A) -- (B);
\draw[dotted] (B) -- (C);
\draw[dotted] (C) -- (D);
\draw[fill] (A) circle (0.05);
\draw[fill] (B) circle (0.05);
\draw[fill] (C) circle (0.05);
\draw[fill] (D) circle (0.05);
\draw (P) node[right] {$+$ $b_3$};
\end{tikzpicture} 
\begin{tikzpicture}[scale=1.3]
\draw (A) .. controls (B) and (C) .. (D);
\draw[dotted] (A) -- (B);
\draw[dotted] (B) -- (C);
\draw[dotted] (C) -- (D);
\draw[fill] (A) circle (0.05);
\draw[fill] (B) circle (0.05);
\draw[fill] (C) circle (0.05);
\draw[fill] (D) circle (0.05);
\draw (P) node[right] {$=$};
\end{tikzpicture}}\caption*{Example 1: Non-singular segment}  
\subfloat{
\begin{tikzpicture}[scale=1.3]
\path (-0.7,0.5) coordinate (P);
\path (0,0) coordinate (A);
\path (0,0.05) coordinate (Ap);
\path (0,-0.05) coordinate (Am);
\path (-0.03,0) coordinate (Al);
\path (0.03,0) coordinate (Ar);
\path (0,1) coordinate (C);
\path (0.03,1) coordinate (Cp);
\path (-0.03,1) coordinate (Cm);
\path (1,1) coordinate (B);
\path (1+0.03,1) coordinate (Bp);
\path (1-0.03,1) coordinate (Bm);
\path (1,0) coordinate (D);
\path (1,+0.05) coordinate (Dp);
\path (1,-0.05) coordinate (Dm);
\path (1.03,0) coordinate (Dr);
\path (0.97,0) coordinate (Dl);
\draw (A) -- (B);
\draw (B) -- (C);
\draw (C) -- (D);
\draw[fill] (A) circle (0.05);
\draw[fill] (B) circle (0.05);
\draw[fill] (C) circle (0.05);
\draw[fill] (D) circle (0.05);
\draw (-0.4,0.5) node[right] {$b_0$};
\end{tikzpicture} 
\begin{tikzpicture}[scale=1.3]
\draw (A) -- (B);
\draw (Bp) -- (Dr);
\draw (Bm) -- (Dl);
\draw[dotted] (B) -- (C);
\draw[dotted] (C) -- (D);
\draw[fill] (A) circle (0.05);
\draw[fill] (B) circle (0.05);
\draw[fill] (C) circle (0.05);
\draw[fill] (D) circle (0.05);+12
\draw (P) node[right] {$+$ $b_1$};
\end{tikzpicture}
\begin{tikzpicture}[scale=1.3]
\draw (Ar) -- (Cp);
\draw (Al) -- (Cm);
\draw (C) -- (D);
\draw[dotted] (A) -- (B);
\draw[dotted] (B) -- (C);
\draw[fill] (A) circle (0.05);
\draw[fill] (B) circle (0.05);
\draw[fill] (C) circle (0.05);
\draw[fill] (D) circle (0.05);
\draw (P) node[right] {$+$ $b_2$};
\end{tikzpicture} 
\begin{tikzpicture}[scale=1.3]
\draw (A) -- (D);
\draw (Ap) -- (Dp);
\draw (Am) -- (Dm);
\draw[dotted] (A) -- (B);
\draw[dotted] (B) -- (C);
\draw[dotted] (C) -- (D);
\draw[fill] (A) circle (0.05);
\draw[fill] (B) circle (0.05);
\draw[fill] (C) circle (0.05);
\draw[fill] (D) circle (0.05);
\draw (P) node[right] {$+$ $b_3$};
\end{tikzpicture} 
\begin{tikzpicture}[scale=1.3]
\draw (A) .. controls (B) and (C) .. (D);
\draw[dotted] (A) -- (B);
\draw[dotted] (B) -- (C);
\draw[dotted] (C) -- (D);
\draw[fill] (A) circle (0.05);
\draw[fill] (B) circle (0.05);
\draw[fill] (C) circle (0.05);
\draw[fill] (D) circle (0.05);
\draw (P) node[right] {$=$};
\end{tikzpicture}}\caption*{Example 2: Singular segment (cusp)} 
\subfloat{
\begin{tikzpicture}[scale=1.3]
\path (-0.7,0.5) coordinate (P);
\path (0,0) coordinate (A);
\path (-0.03535,0.03535) coordinate (Ap);
\path (0.03535,-0.03535) coordinate (Am);
\path (0,0.03) coordinate (At);
\path (0,-0.03) coordinate (Ab);
\path (0,1) coordinate (B);
\path (0,1.03) coordinate (Bp);
\path (0,0.97) coordinate (Bm);
\path (1,1) coordinate (D);
\path (1-0.03535,1+0.03535) coordinate (Dp);
\path (1+0.03535,1-0.03535) coordinate (Dm);
\path (1,1+0.03) coordinate (Dt);
\path (1,1-0.03) coordinate (Db);
\path (1,0) coordinate (C);
\path (1,+0.05) coordinate (Cp);
\path (1,-0.05) coordinate (Cm);
\path (1,0.03) coordinate (Ct);
\path (1,-0.03) coordinate (Cb);
\draw (A) -- (B);
\draw (B) -- (C);
\draw (C) -- (D);
\draw[fill] (A) circle (0.05);
\draw[fill] (B) circle (0.05);
\draw[fill] (C) circle (0.05);
\draw[fill] (D) circle (0.05);
\draw (-0.4,0.5) node[right] {$b_0$};
\end{tikzpicture} 
\begin{tikzpicture}[scale=1.3]
\draw (A) -- (B);
\draw (Bp) -- (Dt);
\draw (Bm) -- (Db);
\draw[dotted] (C) -- (D);
\draw[fill] (A) circle (0.05);
\draw[fill] (B) circle (0.05);
\draw[fill] (C) circle (0.05);
\draw[fill] (D) circle (0.05);
\draw (P) node[right] {$+$ $b_1$};
\end{tikzpicture}
\begin{tikzpicture}[scale=1.3]
\draw (At) -- (Ct);
\draw (Ab) -- (Cb);
\draw (C) -- (D);
\draw[dotted] (A) -- (B);
\draw[dotted] (B) -- (C);
\draw[fill] (A) circle (0.05);
\draw[fill] (B) circle (0.05);
\draw[fill] (C) circle (0.05);
\draw[fill] (D) circle (0.05);
\draw (P) node[right] {$+$ $b_2$};
\end{tikzpicture} 
\begin{tikzpicture}[scale=1.3]
\draw (A) -- (D);
\draw (Ap) -- (Dp);
\draw (Am) -- (Dm);
\draw[dotted] (A) -- (B);
\draw[dotted] (B) -- (C);
\draw[dotted] (C) -- (D);
\draw[fill] (A) circle (0.05);
\draw[fill] (B) circle (0.05);
\draw[fill] (C) circle (0.05);
\draw[fill] (D) circle (0.05);
\draw (P) node[right] {$+$ $b_3$};
\end{tikzpicture} 
\begin{tikzpicture}[scale=1.3]
\draw (A) .. controls (B) and (C) .. (D);
\draw[dotted] (A) -- (B);
\draw[dotted] (B) -- (C);
\draw[dotted] (C) -- (D);
\draw[fill] (A) circle (0.05);
\draw[fill] (B) circle (0.05);
\draw[fill] (C) circle (0.05);
\draw[fill] (D) circle (0.05);
\draw (P) node[right] {$=$};
\end{tikzpicture}}\caption*{Example 3: Double point at infinity}
\subfloat{
\begin{tikzpicture}[scale=1.3]
\path (-0.7,0.5) coordinate (P);
\path (0,0) coordinate (A);
\path (0,0.05) coordinate (Ap);
\path (0,-0.05) coordinate (Am);
\path (1/3,1) coordinate (B);
\path (1/3,1+0.05) coordinate (Bp);
\path (1/3,1-0.05) coordinate (Bm);
\path (2/3,1) coordinate (C);
\path (2/3,1+0.05) coordinate (Cp);
\path (2/3,1-0.05) coordinate (Cm);
\path (1,0) coordinate (D);
\path (1,+0.05) coordinate (Dp);
\path (1,-0.05) coordinate (Dm);
\draw (A) -- (B);
\draw (B) -- (C);
\draw (C) -- (D);
\draw[fill] (A) circle (0.05);
\draw[fill] (B) circle (0.05);
\draw[fill] (C) circle (0.05);
\draw[fill] (D) circle (0.05);
\draw (-0.4,0.5) node[right] {$b_0$};
\end{tikzpicture} 
\begin{tikzpicture}[scale=1.3]
\draw (A) -- (B);
\draw (Bp) -- (Dp);
\draw (Bm) -- (Dm);
\draw[dotted] (B) -- (C);
\draw[dotted] (C) -- (D);
\draw[fill] (A) circle (0.05);
\draw[fill] (B) circle (0.05);
\draw[fill] (C) circle (0.05);
\draw[fill] (D) circle (0.05);
\draw (P) node[right] {$+$ $b_1$};
\end{tikzpicture}
\begin{tikzpicture}[scale=1.3]
\draw (Ap) -- (Cp);
\draw (Am) -- (Cm);
\draw (C) -- (D);
\draw[dotted] (A) -- (B);
\draw[dotted] (B) -- (C);
\draw[fill] (A) circle (0.05);
\draw[fill] (B) circle (0.05);
\draw[fill] (C) circle (0.05);
\draw[fill] (D) circle (0.05);
\draw (P) node[right] {$+$ $b_2$};
\end{tikzpicture} 
\begin{tikzpicture}[scale=1.3]
\draw (A) -- (D);
\draw (Ap) -- (Dp);
\draw (Am) -- (Dm);
\draw[dotted] (A) -- (B);
\draw[dotted] (B) -- (C);
\draw[dotted] (C) -- (D);
\draw[fill] (A) circle (0.05);
\draw[fill] (B) circle (0.05);
\draw[fill] (C) circle (0.05);
\draw[fill] (D) circle (0.05);
\draw (P) node[right] {$+$ $b_3$};
\end{tikzpicture} 
\begin{tikzpicture}[scale=1.3]
\draw (A) .. controls (B) and (C) .. (D);
\draw[dotted] (A) -- (B);
\draw[dotted] (B) -- (C);
\draw[dotted] (C) -- (D);
\draw[fill] (A) circle (0.05);
\draw[fill] (B) circle (0.05);
\draw[fill] (C) circle (0.05);
\draw[fill] (D) circle (0.05);
\draw (P) node[right] {$\neq$};
\end{tikzpicture}}\caption*{Example 4: Degeneration to a conic}
\subfloat{
\begin{tikzpicture}[scale=1.3]
\path (-0.7,0.5) coordinate (P);
\path (0,0) coordinate (A);
\path (0,0.05) coordinate (Ap);
\path (0,-0.05) coordinate (Am);
\path (0,1) coordinate (B);
\path (0,1+0.05) coordinate (Bp);
\path (0,1-0.05) coordinate (Bm);
\path (0.5,0.0) coordinate (C);
\path (0.5,0.0+0.05) coordinate (Cp);
\path (0.5,0.0-0.05) coordinate (Cm);
\path (1,0.0) coordinate (D);
\path (1,0.0+0.05) coordinate (Dp);
\path (1,0.0-0.05) coordinate (Dm);
\draw (A) -- (B);
\draw (B) -- (C);
\draw (A) -- (D);
\draw[fill] (A) circle (0.05);
\draw[fill] (B) circle (0.05);
\draw[fill] (C) circle (0.05);
\draw[fill] (D) circle (0.05);
\draw (-0.4,0.5) node[right] {$b_0$};
\end{tikzpicture} 
\begin{tikzpicture}[scale=1.3]
\draw (A) -- (B);
\draw (Bp) -- (Dp);
\draw (Bm) -- (Dm);
\draw[dotted] (B) -- (C);
\draw[dotted] (C) -- (D);
\draw[fill] (A) circle (0.05);
\draw[fill] (B) circle (0.05);
\draw[fill] (C) circle (0.05);
\draw[fill] (D) circle (0.05);
\draw (P) node[right] {$+$ $b_1$};
\end{tikzpicture} 
\begin{tikzpicture}[scale=1.3]
\draw (Ap) -- (Cp);
\draw (A) -- (C);
\draw (C) -- (D);
\draw (A) -- (D);
\draw (Ap) -- (Dp);
\draw (Am) -- (Dm);
\draw[dotted] (A) -- (B);
\draw[dotted] (B) -- (C);
\draw[fill] (A) circle (0.05);
\draw[fill] (B) circle (0.05);
\draw[fill] (C) circle (0.05);
\draw[fill] (D) circle (0.05);
\draw (P) node[right] {$+$ $b_2$};
\end{tikzpicture}  
\begin{tikzpicture}[scale=1.3]
\draw (A) -- (D);
\draw (Ap) -- (Dp);
\draw (Am) -- (Dm);
\draw[dotted] (A) -- (B);
\draw[dotted] (B) -- (C);
\draw[dotted] (C) -- (D);
\draw[fill] (A) circle (0.05);
\draw[fill] (B) circle (0.05);
\draw[fill] (C) circle (0.05);
\draw[fill] (D) circle (0.05);
\draw (P) node[right] {$+$ $b_3$};
\end{tikzpicture} 
\begin{tikzpicture}[scale=1.3]
\draw (A) .. controls (B) and (C) .. (D);
\draw[dotted] (A) -- (B);
\draw[dotted] (B) -- (C);
\draw[dotted] (C) -- (D);
\draw[fill] (A) circle (0.05);
\draw[fill] (B) circle (0.05);
\draw[fill] (C) circle (0.05);
\draw[fill] (D) circle (0.05);
\draw (P) node[right] {$\neq$};
\end{tikzpicture}}\caption*{Example 5: Collinear control points}
\end{center}
\caption{The quantities computed for each of these examples are shown in Table \ref{tab:examples}. The fourth and fifth examples fail. The fourth is a curve which 
degenerates to a conic, whereas the fifth has three collinear points. These cases need to be treated separately.}\label{fig:examples}
\end{figure}

\subsection{Example of regular conic degeneration}

This is the fourth example of Figure \ref{fig:examples}. Having detected that both $\phi_1=0$ and $\phi_2=0,$ Theorem \ref{thm:conic} tells us we have a conic 
section. We thus use the formula of Corollary \ref{cor:conicequation} for the implicit representation. This gives,
\begin{equation*}
\begin{split}
q(x,y) & = u_0 u_3 L_{03}(x,y)^2-u_1 u_2 L_{01}(x,y) L_{23}(x,y), \\
       & = y^2 + 9(x - y/3)(x + y/3 - 1).
\end{split}
\end{equation*}
Since we have a conic section the double point computations are not applicable. 

Using the conic class conditions for this example we compute $Y_1=Y_2=1/2,$ which gives $\eta^2=1.$ 
This shows that we have a parabola, which is consistent with the curve pictured in Example 4 of Figure \ref{fig:examples}.

\subsection{Example of conic degeneration with negative weights}

As mentioned previously, the result of Theorem \ref{thm:conic} is not restricted to convex polygons with non-negative weights. 
Consider, for example, the curve pictured in Figure \ref{fig:conic_neg}, which is given by the control points 
\[
\mathbf{c}_0 = \colvec{1/2 \\ 0}, \ \mathbf{c}_1 = \colvec{0 \\ 1}, \ \mathbf{c}_2 = \colvec{1 \\ 1/2}, \ \mathbf{c}_3 = \colvec{0 \\ 0}.
\]
Using the results of Theorem \ref{thm:conic}, we can find a real conic section corresponding to these points. 
Without loss of generality, we can assume $u_0=u_3=1$ \cite{farin_2002}. 
Using this assumption, we can solve the equations of Theorem \ref{thm:conic} for the remaining weights. We thus obtain 
\[
u_1 = \frac{\lambda_1}{\left(\lambda_0^2\lambda_3\right)^{1/3}} = -(1/48)^{1/3}, \quad
u_2 = \frac{\lambda_2}{\left(\lambda_0\lambda_3^2\right)^{1/3}} = -(2/9)^{1/3}.
\]
The weights corresponding to the real cube roots of these quantities determine the conic pictured in Figure \ref{fig:conic_neg}. 
By construction, we have $\phi_1=\phi_2=0.$ The implicit equation is given by 
\[
q(x,y) = \frac{1}{24}\left((-2x-y+1)(x-2y) + 6y^2\right).
\]
It should be noted that this construction can be used to determine a real conic section corresponding to \emph{any} set of control points such that no three are collinear.
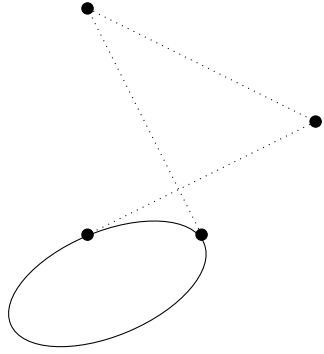
\begin{figure}
\begin{center}
\begin{tikzpicture}[scale=3]
  \path (1/2, 0) coordinate (A0);
  \path (0, 1)coordinate (B0);
  \path (1, 1/2) coordinate (C0);
  \path (0, 0) coordinate (D0);
  \draw[dotted] (A0) -- (B0);
  \draw[dotted] (B0) -- (C0);
  \draw[dotted] (C0) -- (D0);
  \draw[rotate around={23:(2/23,-5/23)}] (2/23,-5/23) ellipse (0.46 and 0.23);
  \draw[fill] (A0) circle (0.025);
  \draw[fill] (B0) circle (0.025);
  \draw[fill] (C0) circle (0.025);
  \draw[fill] (D0) circle (0.025);
\end{tikzpicture}
\end{center}
\caption{An example of conic degeneration in a curve with non-convex control polygon.}\label{fig:conic_neg}
\end{figure}
The proof of the conic class conditions in \cite{wang_1992} is also fully general to control polygons with no three collinear control points. In our example, we have 
\[
Y_1 = 4/5, \ Y_2 = 3/5, \ \eta^2 = 25/48 < 1,
\]
thus confirming that we have an ellipse.

\subsection{Example with collinear control points}

The disappearance of $\lambda_1$ in the example pictured in Figure \ref{fig:example_collinear} indicates that a collinearity occurs between $\mathbf{c}_0,$ $\mathbf{c}_2$ and $\mathbf{c}_3.$ For this example, the weights $w_i$ are all assumed to be equal to $1.$ We see that there 
appears a linear dependency in the basis functions, between $K_2$ and $K_3,$ and the coefficients $b_0$ and $b_1$ become zero. Following the suggestion of Section 
\ref{sec:collinear}, we therefore subdivide the curve a single time, at the parameter value $t=1/2.$ This gives two curves with the following control points 
\[
\mathbf{c}_{1,0} = \colvec{0 \\ 0}, \mathbf{c}_{1,1} = \colvec{0 \\ 1/2}, \mathbf{c}_{1,2} = \colvec{1/8 \\ 1/2}, \mathbf{c}_{1,3} = \colvec{5/16 \\ 3/8},
\]
and 
\[
\mathbf{c}_{2,0} = \colvec{5/16 \\ 3/8}, \mathbf{c}_{2,1} = \colvec{1/2 \\ 1/4}, \mathbf{c}_{2,2} = \colvec{3/4 \\ 0}, \mathbf{c}_{2,3} = \colvec{1 \\ 0}.
\]
These can each be treated in the same way as the previous examples, finding in both cases that $\mathbf{s}=(-8,36)^T;$ an acnode. 

It may be noted that although the implicit equation vanishes identically, the double point can still be computed without subdivision if we choose the correct formula; 
that is by choosing the barycentric formula (\ref{eq:sreduced2}), with respect to the three non-collinear points $\mathbf{c}_0,$ $\mathbf{c}_1 $ and $\mathbf{c}_3.$ 
We then get 
\[
\mathbf{s} = \frac{-243/2(0,1)^T+27(1,0)}{-27/8} = \colvec{-8 \\ 36}.
\]
\begin{figure}
\begin{center}
\begin{tikzpicture}[scale=3]
\path (0,0) coordinate (A);
\path (0,0.5) coordinate (B);
\path (0.125,0.5) coordinate (C);
\path (5/16,3/8) coordinate (D);
\path (0.5,0.25) coordinate (E);
\path (0.75,0.0) coordinate (F);
\path (1,0) coordinate (G);
\path (0,1) coordinate (BC);
\path (0.5,0) coordinate (EF);
\draw[dotted] (A) -- (BC);
\draw[dotted] (BC) -- (EF);
\draw[dotted] (EF) -- (G);
\draw[fill] (A) circle (0.025);
\draw[fill] (BC) circle (0.025);
\draw[fill] (EF) circle (0.025);
\draw[fill] (G) circle (0.025);
\draw (A) .. controls (B) and (C) .. (D);
\draw (D) .. controls (E) and (F) .. (G);
\draw (0.5,0.5) node[right] {$\mathbf{p}(t)$};
\end{tikzpicture} \hspace*{1cm}
\begin{tikzpicture}[scale=3]
\draw[dotted] (A) -- (B);
\draw[dotted] (B) -- (C);
\draw[dotted] (C) -- (D);
\draw[fill] (A) circle (0.025);
\draw[fill] (B) circle (0.025);
\draw[fill] (C) circle (0.025);
\draw[fill] (D) circle (0.025);
\draw (A) .. controls (B) and (C) .. (D);
\draw (0.1,0.2) node[right] {$\mathbf{p}_1(t)$};
\end{tikzpicture} \hspace*{1cm}
\begin{tikzpicture}[scale=3]
\path (5/16,3/8) coordinate (D2);
\path (0.5,0.25) coordinate (E2);
\path (0.75,0.0) coordinate (F2);
\path (1,0) coordinate (G2);
\draw[dotted] (D2) -- (E2);
\draw[dotted] (E2) -- (F2);
\draw[dotted] (F2) -- (G2);
\draw[fill] (D2) circle (0.025);
\draw[fill] (E2) circle (0.025);
\draw[fill] (F2) circle (0.025);
\draw[fill] (G2) circle (0.025);
\draw (D2) .. controls (E2) and (F2) .. (G2);
\draw (0.8,0.3) node[right] {$\mathbf{p}_2(t)$};
\end{tikzpicture} 
\end{center}
\caption{An example with collinear control points. The original curve $\mathbf{p}(t)$ (left) is subdivided into two curves, $\mathbf{p}_1(t)$ (centre) and $\mathbf{p}_2(t)$ (right), using the de Casteljau algorithm, either of which can be used for the implicitization.}\label{fig:example_collinear}
\end{figure}
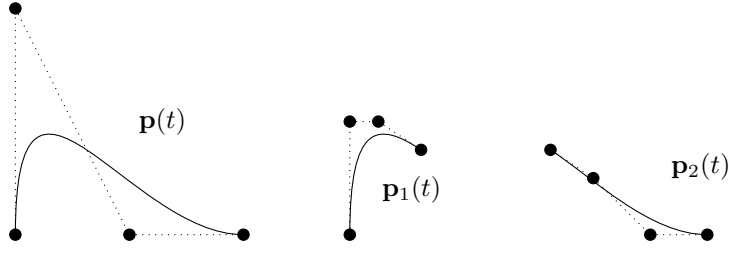

\subsection{Example with an unwanted singularity}

For this example we use the control points for the curve pictured in Figure \ref{fig:singlines}, given by
\[
\mathbf{c}_0 = \colvec{1/4 \\ 0}, \ \mathbf{c}_1 = \colvec{9/8 \\ 1/2}, \ \mathbf{c}_2 = \colvec{13/16 \\ 3/4}, \ \mathbf{c}_3 = \colvec{17/32 \\ 19/24}, 
\]
with $w_0=w_1=w_2=w_3=1.$ Using the same computations as in the previous examples we get 
\begin{equation*}
\begin{split}
(\lambda_i)_{i=0}^3 & = (11/192, 15/64, 53/96, 3/8), \\
(b_i)_{i=0}^3       & = (312435/4194304, -66285/2097152, 220957/18874368, 1441/1048576), \\
(\phi_i)_{i=1}^3    & =  (-491/4096, 379/3072, 131/2048), \\ 
\mathbf{s}          & = (363241/470596, 146294/352947), \\
\phi_1\phi_2        & = -186089/12582912.
\end{split}
\end{equation*}
The negative value of $\phi_1\phi_2$ indicates that we have an unwanted singularity, the location of which is given by $\mathbf{s}.$ Since the curve
exhibits an unwanted singularity, we compute the lines $\tilde{S}_1$ and $\tilde{S}_2,$ which intersect the double point:
\begin{equation*}
\begin{split}
\tilde{S}_1(x,y) & = 965/8192x - 1215/8192y - 965/32768, \\
\tilde{S}_2(x,y) & = -12773/49152x - 10865/65536y + 17649/65536.
\end{split}
\end{equation*}
For a given point $(x,y),$ we can then use boolean operations on the signs of $q(x,y),$ $\tilde{S}_1(x,y)$ and $\tilde{S}_2(x,y)$ in order to define which points lie 
`inside' and `outside' the curve.

\section{Discussion and conclusion}\label{sec:conc}

For the sake of brevity, we have omitted extended discussions of interesting features that arise in using this method in the previous sections. 
In this section we mention some of these features before concluding the paper. 

An alternative representation for the implicit coefficients is to divide through by the non-zero factor 
$u_0 u_1 u_2 u_3 \lambda_0 \lambda_1 \lambda_2 \lambda_3:$
\begin{eqnarray*}
\tilde{b}_0 & = &  \frac{u_1 u_2}{u_0 u_3}-\frac{\lambda_1 \lambda_2}{\lambda_0 \lambda_3}, \\
\tilde{b}_1 & = & \frac{\lambda_1^2}{\lambda_0 \lambda_2} - \frac{u_1^2}{u_0 u_2},   \\
\tilde{b}_2 & = & \frac{\lambda_2^2}{\lambda_1 \lambda_3} - \frac{u_2^2}{u_1 u_3},    \\
\tilde{b}_3 & = & \frac{\lambda_0 \lambda_3}{\lambda_1 \lambda_2} - \frac{u_0 u_3}{u_1 u_2}.
\end{eqnarray*}
This form highlights more clearly the symmetry between the $u_i$s and the $\lambda_i$s. It also appears that this approach aids the numerical stability 
of the implementation, being only quadratic in the numerators and denominators.

It is interesting to note that the exponents which appear in the formulas for the coefficients (\ref{eq:coefs}), are reminiscent of the exponents 
of the terms of the discriminant of a univariate cubic polynomial in monomial form\footnote{There is a fifth term in the cubic discriminant that 
would correspond to $b_4 = u_0 u_1 u_2 u_3 \lambda_0 \lambda_1 \lambda_2 \lambda_3 - u_0 u_1 u_2 u_3 \lambda_0 \lambda_1 \lambda_2 \lambda_3 \equiv 0.$}. 
The relationship between the coefficients and the cubic discriminant should be the subject of further research.

Experiments show that parts of the method appear to be extensible to higher degrees. In particular, it is not difficult to define basis functions
for rational quartic curves using the same heuristic reasoning as in Section \ref{sec:thebasis}. However, the number of basis functions appears to increase
exponentially, and attempts to find an explicit formula for the coefficients appear to be more difficult. An extension of the theory to surfaces also 
appears to be much more difficult due to the complicated limiting control surfaces involved. However, this could be a direction for future research.

\subsection{Conclusion}

This paper has shown that it is possible to represent the implicit form of all non-degenerate rational planar cubic B\'ezier 
curves as a linear combination of four basis functions, whenever no three control points are collinear. The method has been described in
terms of purely geometric quantities and symmetries have been highlighted. The resulting coefficients of the implicit polynomial lead 
naturally to a geometric characterization of several aspects of the curve. The method has a compact representation and can easily be 
implemented on a GPU, as an alternative to the methods in \cite{blinn_2005,pfeifle_2012}. Additionally, the formulas which aid in locating 
the singularity, and whether or not it is unwanted, are simple and computationally inexpensive.

\subsection*{Acknowledgements}

I would like to thank Tor Dokken for reading through the manuscript and for helpful suggestions. 
I would also like to thank the anonymous reviewers for their comments and corrections.
The research leading to these results has received funding from the European Community's Seventh Framework Programme FP7/2007-2013 under 
grant agreement n$^\circ$ PITN-GA-2008-214584 (SAGA), and from the Research Council of Norway (IS-TOPP).

\bibliographystyle{plain}
\nocite{*}
\bibliography{paper}

\begin{thebibliography}{10}

\bibitem{barrowclough_2012}
O.J.D. Barrowclough and T.~Dokken.
\newblock Approximate implicitization using linear algebra.
\newblock {\em Journal of Applied Mathematics}, 2012.
\newblock {doi}:10.1155/2012/293746.

\bibitem{buse_2010}
Laurent Bus{\'e} and Thang Luu~Ba.
\newblock {Matrix-based Implicit Representations of Rational Algebraic Curves
  and Applications}.
\newblock {\em Computer Aided Geometric Design}, 27(9):681--699, October 2010.

\bibitem{chen_2002}
Falai Chen and Wenping Wang.
\newblock The $\mu$-basis of a planar rational curve: properties and
  computation.
\newblock {\em Graph. Models}, 64(6):368--381, November 2002.

\bibitem{corless_2001}
Robert~M. Corless, Mark Giesbrecht, Ilias~S. Kotsireas, and Stephen~M. Watt.
\newblock Numerical implicitization of parametric hypersurfaces with linear
  algebra.
\newblock In John Campbell and Eugenio Roanes-Lozano, editors, {\em Artificial
  Intelligence and Symbolic Computation}, volume 1930 of {\em Lecture Notes in
  Computer Science}, pages 174--183. Springer Berlin/Heidelberg, 2001.
\newblock 10.1007/3-540-44990-6\_13.

\bibitem{dokken_2001}
Tor Dokken.
\newblock Approximate implicitization.
\newblock In {\em Mathematical methods for curves and surfaces}, pages 81--102.
  Vanderbilt Univ. Press, Nashville, TN, 2001.

\bibitem{emiris_2005}
Ioannis~Z. Emiris and Ilias~S. Kotsireas.
\newblock {Implicitization exploiting sparseness}.
\newblock {\em DIMACS Series in Discrete Mathematics and Theoretical Computer
  Science}, 67:281, 2005.

\bibitem{farin_2002}
Gerald Farin.
\newblock {\em Curves and surfaces for {CAGD}: a practical guide}.
\newblock Morgan Kaufmann Publishers Inc., San Francisco, CA, USA, 2002.

\bibitem{floater_1995}
Michael~S. Floater.
\newblock Rational cubic implicitization.
\newblock In M.~Daehlen, T.~Lyche, and L.L. Schumaker, editors, {\em
  Mathematical Methods for Curves and Surfaces}, pages 151--159. Vanderbilt
  University Press, 1995.

\bibitem{blinn_2005}
Charles Loop and Jim Blinn.
\newblock Resolution independent curve rendering using programmable graphics
  hardware.
\newblock {\em ACM Trans. Graph.}, 24(3):1000--1009, July 2005.

\bibitem{pfeifle_2012}
Ron Pfeifle.
\newblock Rendering cubic curves on a {GPU} with {F}loater's implicitization.
\newblock {\em Journal of Graphics Tools}, 16(2):105--122, 2012.

\bibitem{sederberg_1985}
Thomas~W. Sederberg, David~C. Anderson, and Ron~N. Goldman.
\newblock Implicitization, inversion, and intersection of planar rational cubic
  curves.
\newblock {\em Computer Vision, Graphics, and Image Processing}, 31:89--102,
  1985.

\bibitem{sederberg_1995}
Thomas~W. Sederberg and Falai Chen.
\newblock Implicitization using moving curves and surfaces.
\newblock In {\em SIGGRAPH 95: Proceedings of the 22nd annual conference on
  Computer graphics and interactive techniques}, pages 301--308, New York, NY,
  USA, 1995. ACM.

\bibitem{sederberg_1986}
Thomas~W. Sederberg and Scott~R. Parry.
\newblock {Comparison of three curve intersection algorithms}.
\newblock {\em Computer-Aided Design}, 18(1):58--63, 1986.

\bibitem{stone_1989}
Maureen~C. Stone and Tony~D. DeRose.
\newblock A geometric characterization of parametric cubic curves.
\newblock {\em ACM Trans. Graph.}, 8(3):147--163, July 1989.

\bibitem{thomassen_2005}
Jan~B. Thomassen.
\newblock Self-intersection problems and approximate implicitization.
\newblock In {\em Computational Methods for Algebraic Spline Surfaces}, pages
  155--170. Springer Berlin Heidelberg, 2005.

\bibitem{wang_1992}
Guo-Jin Wang and Guo-Zhao Wang.
\newblock {The rational cubic B\'ezier representation of conics}.
\newblock {\em Computer Aided Geometric Design}, 9(6):447--455, 1992.

\end{thebibliography}

\begin{appendix}

\section{Some geometric properties}

Here we state some simple geometric properties that are used in the proofs of the next section.

\begin{prop}\label{prop:lambdapmpm}
For any four points $\mathbf{c}_0$, $\mathbf{c}_1$, $\mathbf{c}_2$ and $\mathbf{c}_3$ we have the following:
\[
\lambda_0 + \lambda_1 + \lambda_2 + \lambda_3 = 0.
\]
\end{prop}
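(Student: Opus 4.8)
The plan is to realize the sum $\lambda_0+\lambda_1+\lambda_2+\lambda_3$, up to an overall sign, as the Laplace expansion of a single $4\times4$ determinant that vanishes for the trivial reason of having two equal columns. This turns the identity into a one-line determinant fact plus a short sign-bookkeeping check against the index conventions of Definition \ref{def:lambda}.

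First I would recall that, by Definition \ref{def:lambda}, each $\lambda_i$ equals $(-1)^{i+1}$ times the $3\times3$ determinant $\lambda_{i+1,i+2,i+3}$ obtained by deleting the row $(c_{i,0},c_{i,1},1)$ from the $4\times3$ array whose rows are $(c_{k,0},c_{k,1},1)$, $k=0,1,2,3$. After reordering the rows of each such minor back into increasing index order, $\lambda_i$ is precisely $\pm$ the signed minor of $\mathbf{c}_i$; concretely one checks $\lambda_0=-\lambda_{123}$, $\lambda_1=\lambda_{023}$, $\lambda_2=-\lambda_{013}$ and $\lambda_3=\lambda_{012}$, where now all indices are written in increasing order.

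Next I would form the $4\times4$ matrix
\[
N=\begin{pmatrix}
1 & c_{0,0} & c_{0,1} & 1\\
1 & c_{1,0} & c_{1,1} & 1\\
1 & c_{2,0} & c_{2,1} & 1\\
1 & c_{3,0} & c_{3,1} & 1
\end{pmatrix},
\]
whose first and last columns are both the all-ones vector, so that $\det N=0$. Expanding $\det N$ along the first column gives the alternating sum of cofactors $\lambda_{123}-\lambda_{023}+\lambda_{013}-\lambda_{012}$, since deleting row $r$ and the first column leaves exactly the increasingly-ordered minor on the three remaining points. Substituting the relations from the previous step identifies this alternating sum with $-(\lambda_0+\lambda_1+\lambda_2+\lambda_3)$, and since $\det N=0$ the claimed identity follows at once.

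I expect the only delicate point to be the sign bookkeeping, namely matching the cofactor signs $(-1)^{r+1}$ from the column expansion with the signs $(-1)^{i+1}$ and the row-permutation parities hidden in $\lambda_i=(-1)^{i+1}\lambda_{i+1,i+2,i+3}$. I would handle this by converting each $\lambda_{i+1,i+2,i+3}$ to its canonically ordered minor $\lambda_{jkl}$ with $j<k<l$ and tracking the parity of the reordering permutation, exactly as in the four relations above. As a cross-check (and an alternative route), one may instead expand the three coordinate determinants directly and observe the term-by-term cancellation; however, the two-equal-columns argument is cleaner and makes the geometric content transparent, namely that the four alternately signed triangle areas close up to zero.
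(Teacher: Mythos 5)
Your proof is correct, and it takes a genuinely different route from the paper. The paper's own proof is the brute-force one: it simply writes out the four $3\times3$ determinants from Definition \ref{def:lambda} and observes that all twelve terms cancel in pairs. You instead package the identity as the Laplace expansion, along a repeated all-ones column, of a single $4\times4$ determinant that vanishes trivially. Your sign bookkeeping checks out: from $\lambda_i=(-1)^{i+1}\lambda_{i+1,i+2,i+3}$ one indeed gets $\lambda_0=-\lambda_{123}$, $\lambda_1=\lambda_{023}$, $\lambda_2=-\lambda_{013}$, $\lambda_3=\lambda_{012}$, and the cofactor expansion of your matrix $N$ along its first column gives $\lambda_{123}-\lambda_{023}+\lambda_{013}-\lambda_{012}=-(\lambda_0+\lambda_1+\lambda_2+\lambda_3)=\det N=0$. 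What the paper's approach buys is self-containedness at the cost of an opaque computation; what yours buys is conceptual clarity and generality. The repeated-column argument explains \emph{why} the alternating sum of minors must vanish rather than verifying it post hoc, it makes the signs in Definition \ref{def:lambda} appear as cofactor signs rather than ad hoc conventions, and it generalizes immediately: for $n+2$ points in $\mathbb{R}^n$ the analogously signed simplex volumes sum to zero by the same two-equal-columns trick, whereas the termwise cancellation would have to be redone from scratch. Your closing remark that the direct expansion remains available as a cross-check is apt, since that is precisely the paper's proof.
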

\begin{proof}
This can be verified by simply writing out the expression using Definition \ref{def:lambda}, and checking that all terms cancel out.
\end{proof}
This proposition shows that there is some degeneracy in the representation; that is, one of the $\lambda_i$s can always be written as a combination of
the other three. This is reflected in the simplified forms presented in the paper. However, for the sake of symmetry, we have proceeded for the most 
part, to use all four $\lambda_i$ values.

\begin{prop}
Assume we are given four points $(\mathbf{c}_i)_{i=0}^3$ with no three collinear. Then, when the respective denominators are non-zero, we can define
\begin{equation}\label{eq:mpts}
\begin{split}
\mathbf{m}_1 & = \frac{\mathbf{c}_0\lambda_0+\mathbf{c}_1\lambda_1-\mathbf{c}_2\lambda_2-\mathbf{c}_3 \lambda_3}{\lambda_0+\lambda_1-\lambda_2-\lambda_3} 
               = \frac{\mathbf{c}_0\lambda_0+\mathbf{c}_1\lambda_1}{\lambda_0+\lambda_1} 
               = \frac{\mathbf{c}_3\lambda_3+\mathbf{c}_2\lambda_2}{\lambda_3+\lambda_2}, \\
\mathbf{m}_2 & = \frac{\mathbf{c}_0\lambda_0-\mathbf{c}_1\lambda_1+\mathbf{c}_2\lambda_2-\mathbf{c}_3 \lambda_3}{\lambda_0-\lambda_1+\lambda_2-\lambda_3} 
               = \frac{\mathbf{c}_0\lambda_0+\mathbf{c}_2\lambda_2}{\lambda_0+\lambda_2}
               = \frac{\mathbf{c}_1\lambda_1+\mathbf{c}_3\lambda_3}{\lambda_1+\lambda_3}, \\
\mathbf{m}_3 & = \frac{\mathbf{c}_0\lambda_0-\mathbf{c}_1\lambda_1-\mathbf{c}_2\lambda_2+\mathbf{c}_3 \lambda_3}{\lambda_0-\lambda_1-\lambda_2+\lambda_3} 
               = \frac{\mathbf{c}_0\lambda_0+\mathbf{c}_3\lambda_3}{\lambda_0+\lambda_3} 
               = \frac{\mathbf{c}_1\lambda_1+\mathbf{c}_2\lambda_2}{\lambda_1+\lambda_2}. \\
\end{split}
\end{equation}
If any of these points do exist, they define the intersection of the lines $L_{01}$ and $L_{23},$ $L_{02}$ and $L_{13},$ or 
$L_{03}$ and $L_{12}$ respectively.\footnote{At least one of these points exists in the affine plane, since if two of the denominators vanish, then the two pairs of corresponding 
lines are parallel; but then the third point will be the intersection of the two lines passing through opposite vertices of the parallelogram thus formed. These lines must 
necessarily be non-parallel, thus the point of intersection is finite.}
\end{prop}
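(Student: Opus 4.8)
The plan is to reduce the entire statement to two linear identities among the $\lambda_i$ and the control points, after which both the chains of equalities and the intersection claims follow with only routine algebra. The first identity is Proposition \ref{prop:lambdapmpm}, $\lambda_0+\lambda_1+\lambda_2+\lambda_3=0$; the second is its vectorial analogue $\mathbf{c}_0\lambda_0+\mathbf{c}_1\lambda_1+\mathbf{c}_2\lambda_2+\mathbf{c}_3\lambda_3=\mathbf{0}$, which I would establish as the key step. To prove the latter, fix a coordinate $k\in\{0,1\}$ and form the $4\times4$ matrix whose rows are $(c_{0,k},c_{1,k},c_{2,k},c_{3,k})$, $(c_{0,0},c_{1,0},c_{2,0},c_{3,0})$, $(c_{0,1},c_{1,1},c_{2,1},c_{3,1})$ and $(1,1,1,1)$. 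This matrix has a repeated row, hence determinant zero; a Laplace expansion along the top row writes $0$ as an alternating sum of the entries $c_{i,k}$ against $3\times3$ minors, and each such minor is, up to transposition, one of the determinants $\lambda_{jkl}$ of Definition \ref{def:lambda}. The main obstacle is precisely the bookkeeping here: one must reconcile the cofactor signs $(-1)^{i}$ coming from the expansion with the signs $(-1)^{i+1}$ built into the definition $\lambda_i=(-1)^{i+1}\lambda_{i+1,i+2,i+3}$, so that the alternating sum collapses to the uniform statement $\sum_i c_{i,k}\lambda_i=0$. Taking the two coordinates together gives the vector identity, and repeating the construction with the row of ones recovers Proposition \ref{prop:lambdapmpm}.

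With both identities available, I would verify that the three expressions for each $\mathbf{m}_j$ coincide, treating $\mathbf{m}_1$ as representative. The identities give $\mathbf{c}_0\lambda_0+\mathbf{c}_1\lambda_1=-(\mathbf{c}_2\lambda_2+\mathbf{c}_3\lambda_3)$ and $\lambda_0+\lambda_1=-(\lambda_2+\lambda_3)$, so the numerator and denominator of the middle fraction both change sign when passed to the right-hand fraction, showing those two are equal. For the leftmost fraction, the numerator becomes $\mathbf{c}_0\lambda_0+\mathbf{c}_1\lambda_1-(\mathbf{c}_2\lambda_2+\mathbf{c}_3\lambda_3)=2(\mathbf{c}_0\lambda_0+\mathbf{c}_1\lambda_1)$ and the denominator $\lambda_0+\lambda_1-(\lambda_2+\lambda_3)=2(\lambda_0+\lambda_1)$, and the factors of $2$ cancel. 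The computations for $\mathbf{m}_2$ and $\mathbf{m}_3$ are identical under the obvious relabelling, each time splitting the four indices into the two complementary pairs dictated by the sign pattern.

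For the geometric claim I would note that each equivalent form exhibits $\mathbf{m}_j$ as an affine combination of two control points, since the weights $\tfrac{\lambda_a}{\lambda_a+\lambda_b}$ and $\tfrac{\lambda_b}{\lambda_a+\lambda_b}$ sum to $1$. Thus $\mathbf{m}_1$, as an affine combination of $\mathbf{c}_0$ and $\mathbf{c}_1$, lies on $L_{01}$, and, as an affine combination of $\mathbf{c}_2$ and $\mathbf{c}_3$, lies on $L_{23}$; since the no-three-collinear hypothesis forces these two lines to be distinct, their unique intersection is $\mathbf{m}_1$. The same reasoning locates $\mathbf{m}_2$ at $L_{02}\cap L_{13}$ and $\mathbf{m}_3$ at $L_{03}\cap L_{12}$, whenever the relevant denominator is non-zero so that the point is finite.
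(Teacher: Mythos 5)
Your proof is correct, and its final steps (the affine-combination argument for line membership, uniqueness of the intersection of two distinct lines) coincide with the paper's. The genuine difference is in how the chains of equalities are justified. The paper simply asserts that they are "a consequence of Proposition \ref{prop:lambdapmpm}," i.e.\ of the scalar identity $\lambda_0+\lambda_1+\lambda_2+\lambda_3=0$, and proves that identity by brute-force expansion. As you correctly recognize, the scalar identity alone does not suffice: equating, say, $\frac{\mathbf{c}_0\lambda_0+\mathbf{c}_1\lambda_1}{\lambda_0+\lambda_1}$ with $\frac{\mathbf{c}_2\lambda_2+\mathbf{c}_3\lambda_3}{\lambda_2+\lambda_3}$ requires in addition the vectorial identity $\mathbf{c}_0\lambda_0+\mathbf{c}_1\lambda_1+\mathbf{c}_2\lambda_2+\mathbf{c}_3\lambda_3=\mathbf{0}$, which the paper never states. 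Your $4\times4$ determinant with a repeated row supplies exactly this missing ingredient, and it does so economically: expanding along the duplicated coordinate row gives $\sum_i c_{i,k}\lambda_i=0$ for $k=0,1$, while expanding with the row of ones recovers Proposition \ref{prop:lambdapmpm} itself, so both identities come out of one cofactor computation (and your sign bookkeeping is right: the cofactor sign $(-1)^i$ against the definitional sign $(-1)^{i+1}$ produces a uniform, not alternating, sum). In short, your argument is a more complete version of the paper's: the paper buys brevity at the cost of a gap, whereas you get a self-contained proof plus a cleaner, unified derivation of the two $\lambda$-identities. An alternative repair, closer in spirit to the paper's Proposition \ref{prop:l0l1l2}, would be to verify directly that $L_{23}$ vanishes at $\frac{\mathbf{c}_0\lambda_0+\mathbf{c}_1\lambda_1}{\lambda_0+\lambda_1}$ using $L_{23}(\mathbf{c}_0)=\lambda_1$ and $L_{23}(\mathbf{c}_1)=-\lambda_0$, but your route is just as short and yields the identities as reusable facts.
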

\begin{proof}
That the various equalities hold, when the denominators are non-zero, is a consequence of Proposition \ref{prop:lambdapmpm}. The fact that they 
intersect at the respective lines is then a triviality, since each point can be written as a scaled linear combination of the points which define the line.
For example, $\mathbf{m}_1$ must lie on the line $L_{01},$ by the identity $\mathbf{m}_1 = \frac{\mathbf{c}_0\lambda_0+\mathbf{c}_1\lambda_1}{\lambda_0+\lambda_1},$
and on the line $L_{23},$ by $\mathbf{m}_1=\frac{\mathbf{c}_3 \lambda_3+\mathbf{c}_2\lambda_2}{\lambda_3+\lambda_2}.$
\end{proof}

\section{Linear independence and proofs of Theorems}\label{sec:proof}

\subsection{Linear independence}

In the following theorem we establish linear independence of the basis functions, in the applicable cases.
\begin{thm}\label{thm:linindep}
Suppose no three of the points $(\mathbf{c}_i)_{i=0}^3$ are collinear. Then the functions $K_0,K_1,K_2,K_3$ are linearly 
independent.
\end{thm}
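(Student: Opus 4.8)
The plan is to show that any polynomial identity $\sum_{i=0}^3 a_i K_i \equiv 0$ forces $a_0=a_1=a_2=a_3=0$. The key idea is to restrict this identity to the line $\ell$ through the two middle control points $\mathbf{c}_1$ and $\mathbf{c}_2$. On $\ell$ the basis function $K_0$ vanishes identically, while $K_1,K_2,K_3$ each collapse to a cube of an affine-linear form, and these three cubes have \emph{distinct} roots; this will separate $a_1,a_2,a_3$ all at once.

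First I restrict. Since $L_{12}$ is the defining equation of $\ell$ it vanishes identically there, and as it is a factor of $K_0$ we get $K_0|_\ell\equiv0$. I parametrize $\ell$ affinely by $u$, with $\mathbf{c}_1\mapsto u=0$ and $\mathbf{c}_2\mapsto u=1$. Each $L_{ij}|_\ell$ is then an affine function of $u$ whose unique zero is the point where $\ell$ meets the line $\mathbf{c}_i\mathbf{c}_j$, and the non-collinearity hypothesis guarantees that each such restriction is nonconstant. Concretely, $L_{01}|_\ell$ and $L_{13}|_\ell$ both vanish only at $\mathbf{c}_1$ (the lines $\mathbf{c}_0\mathbf{c}_1$ and $\mathbf{c}_1\mathbf{c}_3$ meet $\ell$ only there), so $K_1|_\ell=L_{01}L_{13}^2|_\ell$ is a nonzero scalar multiple of $u^3$; likewise $K_2|_\ell$ is a nonzero multiple of $(u-1)^3$; and $K_3|_\ell=L_{03}^3|_\ell$ is a nonzero multiple of $(u-u_*)^3$, where $u_*$ is the parameter of the intersection $\ell\cap\{L_{03}=0\}$.

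Next I invoke a small lemma: for distinct scalars $t_0,t_1,t_2$ the cubics $(u-t_0)^3,(u-t_1)^3,(u-t_2)^3$ are linearly independent, which follows by comparing coefficients in $u$ (the resulting $3\times3$ Vandermonde-type minor is nonzero). I apply it with $t_0=0$, $t_1=1$, $t_2=u_*$; these are distinct precisely because $\mathbf{c}_0,\mathbf{c}_1,\mathbf{c}_3$ and $\mathbf{c}_0,\mathbf{c}_2,\mathbf{c}_3$ are each non-collinear, which forces $u_*\neq0$ and $u_*\neq1$. Hence the vanishing of $a_1 K_1|_\ell+a_2 K_2|_\ell+a_3 K_3|_\ell$ gives $a_1=a_2=a_3=0$. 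The original identity then reduces to $a_0 K_0\equiv0$, and since $K_0=L_{01}L_{12}L_{23}$ is a product of three nonzero linear forms it is not the zero polynomial, so $a_0=0$.

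The main obstacle is the bookkeeping that certifies each restricted factor to be a genuinely nonconstant linear form with the asserted unique zero; this is exactly where the non-collinearity hypothesis is consumed, and it has to be verified for every line $L_{ij}$ involved. One mild edge case deserves a remark: if $\ell$ is parallel to $\{L_{03}=0\}$ then $L_{03}|_\ell$, and hence $K_3|_\ell$, is a nonzero constant; the argument survives unchanged because $\{u^3,(u-1)^3,1\}$ are still linearly independent (equivalently, one works on the projective line $\ell$ and places the triple zero of $K_3$ at the point at infinity).
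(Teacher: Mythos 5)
Your proof is correct, and it takes a genuinely different route from the paper. The paper's own argument is a point-evaluation argument: it evaluates $\rho=\sum_i b_iK_i$ at the four points $\mathbf{c}_1$, $\mathbf{c}_2$, $\mathbf{m}_1$ (the intersection of $L_{01}$ and $L_{23}$) and $\mathbf{m}_2$ (the intersection of $L_{02}$ and $L_{13}$); the vanishing patterns of the $K_i$ at these points make the resulting $4\times4$ system triangular after row permutation, with determinant $\lambda_0^5\lambda_3^5\neq0$, and the degenerate configurations where $\mathbf{m}_1$ or $\mathbf{m}_2$ does not exist (parallel diagonals) are patched by passing to homogeneous coordinates. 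You instead restrict the identity to the whole line $\ell=\{L_{12}=0\}$, which annihilates $K_0$ outright and collapses $K_1,K_2,K_3$ to nonzero multiples of $u^3$, $(u-1)^3$, $(u-u_*)^3$ with $0,1,u_*$ distinct, so a univariate Vandermonde argument separates $a_1,a_2,a_3$, and $a_0$ follows because $K_0$ is a product of nonzero linear forms in the integral domain $\mathbb{R}[x,y]$. The trade-offs: the paper's computation stays inside its own algebraic framework and yields an explicit determinant in the $\lambda_i$, which is of a piece with the coefficient formulas elsewhere in the paper; your reduction to one variable is more elementary and more robust at the edge cases, since the only degeneracy you face ($\ell$ parallel to $\{L_{03}=0\}$) is dispatched by noting that $\{u^3,(u-1)^3,1\}$ are still independent, with no need for homogeneous coordinates. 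Your bookkeeping of where non-collinearity is consumed (each line $L_{ij}$ involved must be distinct from $\ell$, and $u_*\notin\{0,1\}$ because $\mathbf{c}_1,\mathbf{c}_2\notin\{L_{03}=0\}$) is complete and accurate.
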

\begin{proof}
Assume that
\[
\rho(x,y) = \sum_{i=0}^3 b_i K_i(x,y) = 0, \quad \text{for all } (x,y)\in \mathbb{R}^2.
\]
We prove linear independence by evaluating $\rho$ at four distinct points. Assume that the points $\mathbf{m}_1$ and $\mathbf{m}_2$ defined by (\ref{eq:mpts}) 
exist. Then we can evaluate $\rho$ at $\mathbf{c}_1, \mathbf{c}_2, \mathbf{m}_1$ and $\mathbf{m}_2.$ For example, at $\mathbf{m}_2$ we have 
\[
\rho(\mathbf{m}_2) = \frac{\lambda_0^2\lambda_1\lambda_2\lambda_3^2b_0 - \lambda_1^3\lambda_2^3 b_3 }{(\lambda_0+\lambda_2)^3}.
\] 
After rearranging the rows to obtain a triangular matrix and dividing through by any common factors, we can set up the linear system with respect to evaluation at 
the four points as follows:
\[
\begin{pmatrix}
\lambda_0^2\lambda_3^2   & 0                     & 0                     & -\lambda_1^2\lambda_2^2 \\ 
0                        & \lambda_0^2\lambda_3  & 0                     & -\lambda_1^3            \\
0                        & 0                     & \lambda_3^2\lambda_0  & -\lambda_2^3            \\
0                        & 0                     & 0                     & 1                         
\end{pmatrix}
\begin{pmatrix}
b_0 \\ b_1 \\ b_2 \\ b_3
\end{pmatrix}
= 0.
\]
Now, the determinant of the matrix can be computed as
\[
\lambda_0^5\lambda_3^5
\]
which never vanishes since the control points are not collinear. Care needs to be taken in the case when $L_{01}$ and $L_{23},$ (resp. $L_{02}$ and $L_{13},$) 
are parallel, as the denominator of $\mathbf{m}_1$ (resp. $\mathbf{m}_2$) vanishes. However, a similar linear system can be set up by using homogeneous 
coordinates, in which case the vanishing denominator is not a problem. Thus, the proof holds in all cases.
\end{proof}

\subsection{Proof of Theorem \ref{thm:main}}

The proof of Theorem \ref{thm:main} is essentially a long exercise in expanding the rational function $q\circ\mathbf{p},$ in order to show that it is 
identically zero. We assume the conditions of Theorem \ref{thm:main} for the entirety of this section (i.e., that the cubic is non-degenerate and no 
three control points are collinear). We know, by Theorem \ref{thm:linindep} that the basis functions $(K_i)_{i=0}^3$ are linearly independent, 
and by Proposition \ref{thm:conic} that not all the coefficients $(b_i)_{i=0}^3$ are zero. Thus the polynomial $q$ is not identically zero, and the 
theorem is proved if we can show that $q\circ\mathbf{p}$ vanishes identically.

We first consider the composition of $L_{ij}(\mathbf{p}(t))$ for all $i\neq j.$ 
\begin{lemma}\label{lem:linecomp}
The rational cubic function $L_{ij}(\mathbf{p}(t))$ can be given in Bernstein form by
\[
L_{ij}(\mathbf{p}(t)) = \frac{1}{w(t)} \sum_{k=0 \atop k \neq i,j}^3 \lambda_{ijk} u_k B_k(t),
\]
where $B_k(t) = \binom{3}{k}t^k(1-t)^{3-k}$ and $w(t)$ denotes the denominator of (\ref{eq:cubbez}).
\end{lemma}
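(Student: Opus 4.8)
The plan is to use the fact that each $L_{ij}$ is an \emph{affine} function of $(x,y)$, so that composing it with the rational parametrization both produces the factor $1/w(t)$ automatically and reduces the claim to a single coefficient computation in the Bernstein basis. Expanding the determinant of Definition \ref{def:L} along its first row gives $L_{ij}(x,y)=a\,x+b\,y+c$ with $a=c_{i,1}-c_{j,1}$, $b=c_{j,0}-c_{i,0}$ and $c=c_{i,0}c_{j,1}-c_{j,0}c_{i,1}$. Writing the numerator of (\ref{eq:cubbez}) as $(X(t),Y(t))$ and its denominator as $w(t)$, I would first record the identity of rational functions
\[
L_{ij}(\mathbf{p}(t)) = a\,\frac{X(t)}{w(t)} + b\,\frac{Y(t)}{w(t)} + c = \frac{1}{w(t)}\bigl(a\,X(t)+b\,Y(t)+c\,w(t)\bigr),
\]
which holds wherever $w(t)\neq 0$.

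The next step is to expand the numerator $a\,X(t)+b\,Y(t)+c\,w(t)$ termwise. From (\ref{eq:cubbez}) each control point $\mathbf{c}_k$ enters $X$, $Y$ and $w$ with the common scalar weight $u_k\,t^k(1-t)^{3-k}$, so the coefficient of this weight in the numerator is exactly $a\,c_{k,0}+b\,c_{k,1}+c = L_{ij}(\mathbf{c}_k)$. The key algebraic identity I would then establish is $L_{ij}(\mathbf{c}_k)=\lambda_{ijk}$: substituting $(x,y)=\mathbf{c}_k$ into Definition \ref{def:L} produces a $3\times 3$ determinant with rows $\mathbf{c}_k,\mathbf{c}_i,\mathbf{c}_j$ (each augmented by a trailing $1$), and this differs from the determinant defining $\lambda_{ijk}$ in Definition \ref{def:lambda} only by the cyclic row permutation $(i,j,k)\mapsto(k,i,j)$, which is even and hence leaves the value unchanged. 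Collecting these weights reproduces $\sum_{k=0}^3 \lambda_{ijk}\,u_k\,B_k(t)$ for the numerator.

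It then remains only to drop the vanishing terms: for $k=i$ and $k=j$ the determinant $\lambda_{ijk}$ has two equal rows and is therefore zero, as already noted after Definition \ref{def:lambda}. Hence the sum runs only over $k\neq i,j$, and dividing by $w(t)$ reproduces the asserted formula; since the displayed equation is an identity of rational functions valid on the cofinite set where $w(t)\neq 0$, it holds identically. I expect no conceptual obstacle here, as the argument is essentially bookkeeping; the only care-points are getting the signs of $a,b,c$ correct from the cofactor expansion, confirming that the permutation relating $L_{ij}(\mathbf{c}_k)$ to $\lambda_{ijk}$ is genuinely even so that the identity carries no stray sign, and keeping the normalization of the Bernstein terms consistent so that the weight attached to the $k$-th term is precisely $u_k$.
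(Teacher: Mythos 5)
Your proof is correct and follows essentially the same route as the paper: both expand the determinant defining $L_{ij}$ along its first row, use linearity to collect the weight multiplying each control point $\mathbf{c}_k$ in the numerator of (\ref{eq:cubbez}), identify the resulting coefficient $L_{ij}(\mathbf{c}_k)$ with $\lambda_{ijk}$ (your explicit even-permutation check is left implicit in the paper), and drop the $k=i,j$ terms because those determinants have repeated rows. The only cosmetic difference is that the paper performs the computation inside the determinant, scaling the first row to $(p_0(t),p_1(t),w(t))$, rather than first writing $L_{ij}$ as an affine function $ax+by+c$ and then substituting.
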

\begin{proof}
We first note that by Definition \ref{def:L},
\[L_{ij}(\mathbf{p}(t)) = \frac{1}{w(t)} 
\begin{vmatrix}
 p_0(t) &  p_1(t) & w(t) \\
c_{i,0} & c_{i,1} & 1 \\
c_{j,0} & c_{j,1} & 1
\end{vmatrix}.
\]
Now, by expanding the determinant, we have that
\begin{equation}
\begin{split}
w(t)L_{ij}(\mathbf{p}(t)) & = (c_{i,1}-c_{j,1})\sum_{k=0}^3 c_{k,0} u_k B_k(t) + (c_{j,0}-c_{i,0})\sum_{k=0}^3 c_{k,1} u_k B_k(t) \\  
                                 & \qquad\qquad\qquad\qquad\qquad\qquad\qquad + (c_{i,0}c_{j,1}-c_{i,1}c_{j,0})\sum_{k=0}^3 u_k B_k(t),  \\
                             & = \sum_{k=0}^3 ((c_{i,1}-c_{j,1})c_{k0} + (c_{j,0}-c_{i,0})c_{k,1} + (c_{i,0}c_{j,1}-c_{i,1}c_{j,0})) u_k B_k(t), \\
                             & = \sum_{k=0}^3 \lambda_{ijk} u_k B_k(t).
\end{split}
\end{equation}
Clearly, when $k=i$ or $k=j,$ the corresponding term in the sum is zero meaning we only need sum over $k\neq i,j.$
\end{proof}

A consequence of summing only over $k\neq i,j$ is that we can remove factors of $t$ and $1-t$ when certain coefficients disappear. That is,
we can write 
\begin{equation}\label{eq:t1t}
\begin{split}
w(t)L_{01}(\mathbf{p}(t)) & = t^2 (u_2 \lambda_3(1-t) -  u_3\lambda_2 t), \\ 
w(t)L_{12}(\mathbf{p}(t)) & = (u_0 \lambda_3(1-t)^3 - u_3\lambda_0 t^3), \\ 
w(t)L_{23}(\mathbf{p}(t)) & = (1-t)^2 (u_0 \lambda_1(1-t) - u_1\lambda_0 t), \\ 
w(t)L_{02}(\mathbf{p}(t)) & = -t ( u_1\lambda_3(1-t)^2 - u_3\lambda_1 t^2), \\ 
w(t)L_{13}(\mathbf{p}(t)) & = -(1-t) ( u_0 \lambda_2(1-t)^2 - u_2\lambda_0 t^2), \\
w(t)L_{03}(\mathbf{p}(t)) & = t (1-t) (u_1\lambda_2(1-t) - u_2\lambda_1 t). 
\end{split} 
\end{equation}
Thus by Lemma \ref{lem:linecomp} and the above identities, we can express the compositions $(K_i\circ\mathbf{p})_{i=0}^3$ as follows:
\begin{lemma}\label{lem:basiscomp}
For each $i=0,1,2,3,$ we can express $K_i(\mathbf{p}(t))$ in the form 
\[
K_i(\mathbf{p}(t)) = \frac{t^2(1-t)^2}{w(t)^3}G_i(t),
\]
where
\begin{equation*}
\begin{split}
G_0(t) & = (u_2 \lambda_3(1-t) -  u_3\lambda_2 t)(u_0 \lambda_3(1-t)^3 - u_3\lambda_0 t^3)(u_0 \lambda_1(1-t) - u_1\lambda_0 t), \\
G_1(t) & = (u_2 \lambda_3(1-t) -  u_3\lambda_2 t)( u_0 \lambda_2(1-t)^2 - u_2\lambda_0 t^2)^2, \\
G_2(t) & = (u_1 \lambda_3(1-t)^2 - u_3\lambda_1 t^2)^2 (u_0 \lambda_1(1-t) - u_1\lambda_0 t), \\
G_3(t) & = t (1-t) (u_1\lambda_2(1-t) - u_2\lambda_1 t)^3. \\ 
\end{split}
\end{equation*}
\end{lemma}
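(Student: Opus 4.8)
The plan is to substitute the six composition identities collected in (\ref{eq:t1t}) directly into each of the four products defining $K_i$, and then to extract the common factor $t^2(1-t)^2/w(t)^3$ by simple bookkeeping on the powers of $t$ and $(1-t)$. No new identity is required beyond those already established, so this is essentially a verification.

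First I would observe that each basis function $K_i$ is, by definition, a product of exactly three linear factors $L_{ij}$ (counted with multiplicity). Hence the composition $K_i(\mathbf{p}(t))$ is a product of three of the quantities $L_{ij}(\mathbf{p}(t))$, and since Lemma \ref{lem:linecomp} shows each of these carries a single factor of $1/w(t)$, the composed function acquires the denominator $w(t)^3$. This accounts for the $w(t)^3$ in the claimed form and reduces the task to analysing the numerators $w(t)^3 K_i(\mathbf{p}(t))$, which are products of three of the expressions $w(t)L_{ij}(\mathbf{p}(t))$ written out explicitly in (\ref{eq:t1t}).

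Next I would track, case by case, the powers of $t$ and $(1-t)$ contributed by each factor of (\ref{eq:t1t}). For $K_0 = L_{01}L_{12}L_{23}$ the factor $w(t)L_{01}$ supplies $t^2$, the factor $w(t)L_{23}$ supplies $(1-t)^2$, and $w(t)L_{12}$ supplies no monomial prefactor, so the product carries precisely $t^2(1-t)^2$. The cases $K_1 = L_{01}L_{13}^2$ and $K_2 = L_{02}^2 L_{23}$ are analogous: in each the squared line contributes $(1-t)^2$ or $t^2$ respectively while the remaining single line supplies the complementary factor, and the minus signs appearing in $w(t)L_{02}$ and $w(t)L_{13}$ are harmlessly squared away. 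For $K_3 = L_{03}^3$ the single factor $w(t)L_{03}$ already carries $t(1-t)$, so the cube carries $t^3(1-t)^3$; here one writes $t^3(1-t)^3 = t^2(1-t)^2 \cdot t(1-t)$ and leaves the surplus $t(1-t)$ inside $G_3$, which is exactly why $G_3$ alone displays an explicit $t(1-t)$ prefactor.

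After factoring out $t^2(1-t)^2$ in each case, the residual products are read off directly from (\ref{eq:t1t}) and coincide term for term with the stated $G_i(t)$. The only delicate point is the bookkeeping of the $t,(1-t)$ powers -- in particular noticing that $K_3$ produces one extra factor of $t(1-t)$ that cannot be absorbed into the common $t^2(1-t)^2$ -- but this is purely mechanical, and I expect no genuine obstacle.
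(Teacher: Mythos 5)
Your proposal is correct and follows exactly the paper's route: the paper states Lemma \ref{lem:basiscomp} as an immediate consequence of Lemma \ref{lem:linecomp} and the identities (\ref{eq:t1t}), which is precisely the substitution and power-of-$t,(1-t)$ bookkeeping you carry out, including the correct observations that the signs in $w(t)L_{02}$ and $w(t)L_{13}$ are squared away and that $K_3$ retains a surplus factor $t(1-t)$ inside $G_3$.
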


The common factor of $\frac{t^2(1-t)^2}{w(t)^3}$ can be ignored in showing that $q\circ\mathbf{p}\equiv0;$ it is thus sufficient to show that 
\begin{equation}\label{eq:Gi}
\sum_{i=0}^3 b_i G_i(t) \equiv 0.
\end{equation}
It is a simple, yet lengthy exercise to compute the coefficients of this polynomial in the degree five Bernstein basis, in order to show that they are 
all zero. We compute the coefficient of $B_0^5(t)=(1-t)^5$ as an example. By observation, the coefficients $g_{i,0}$ of $B_0^5(t)$ of each of the functions 
$(G_i)_{i=0}^3$ are as follows:
\begin{equation*}
\begin{split}
g_{0,0} & = u_0^2 u_2 \lambda_1 \lambda_3^2, \\ 
g_{1,0} & = u_0^2 u_2 \lambda_2^2 \lambda_3, \\ 
g_{2,0} & = u_0 u_1^2 \lambda_1 \lambda_3^2, \\ 
g_{3,0} & = 0.  
\end{split}
\end{equation*}
Thus, the coefficient of $B_0^5(t)$ of (\ref{eq:Gi}), is given by 
\[
\sum_{i=0}^3 b_i g_{i,0} = 0.
\]
We can perform similar computations to show that the other coefficients (of $B_{j}^5(t)=\binom{5}{j}t^j(1-t)^{5-j},$ $j=0,\ldots,5$) are all zero, thus proving the theorem.

\subsection{Proof of Lemma \ref{lem:degen}}

\begin{proof}
Using (\ref{eq:t1t}) we can write  
\[
q_2(\mathbf{p}(t)) = \frac{t^2 (1-t)^2}{w(t)^2} (a_0 (1-t)^2 + 2 a_1 t(1-t) + a_2 t^2)
\]
where 
\begin{equation}
\begin{split}
a_0 & = u_0 u_1^2 u_3 \lambda_2^2 - u_0 u_1 u_2^2 \lambda_1\lambda_3 = \Phi_2, \\
a_1 & = u_1^2 u_2^2 \lambda_0\lambda_3 + u_0 u_1 u_2 u_3 \lambda_1\lambda_2 - 2 u_0 u_1 u_2 u_3 \lambda_1\lambda_2 = \frac{1}{2} \Phi_3, \\
a_2 & = u_0 u_2^2 u_3 \lambda_1^2 - u_1^2 u_2 u_3 \lambda_0\lambda_2 = \Phi_1.
\end{split}
\end{equation}
Thus, the terms inside the parentheses are given by $r(t).$  
\end{proof} 


\section{Construction of the basis functions}\label{sec:construct}

Finally we provide a discussion of how the basis functions are constructed. The functions $(K_i)_{i=0}^3$ can be thought of as the implicit representations of various limiting configurations of non-negative weights $w_0, w_1, w_2$ and $w_3.$ The configurations for each of the functions are given in Table \ref{tab:config}. We can use the theory developed earlier in the paper to show that these limits are valid.
\begin{table}
\centering
\begin{tabular}{|l||c|c|c|c|}
\hline 
 Basis function               & $w_0$ & $w_1$ & $w_2$ & $w_3$ \\ \hline\hline
 $K_0 = L_{01} L_{12} L_{23}$ & $1/w$ & $w  $ & $w  $ & $1/w$ \\ \hline
 $K_1 = L_{01} L_{13}^2$      & $1/w$ & $w  $ & $1/w$ & $w  $ \\ \hline
 $K_2 = L_{02}^2 L_{23}$      & $w  $ & $1/w$ & $w  $ & $1/w$ \\ \hline
 $K_3 = L_{03}^3$             & $w  $ & $1/w$ & $1/w$ & $w  $ \\ \hline
\end{tabular}
\caption{Weight configurations for the basis functions $(K_i)_{i=0}^3$ as $w\rightarrow\infty.$}
\label{tab:config}
\end{table}

That the basis functions are given by the limiting configurations can be seen by evaluating the coefficients with their respective weights. For example, consider the function $K_1,$ assuming that no three control points are collinear. The coefficients $(b_i)_{i=0}^3$ for the weights corresponding to $K_1$ in Table \ref{tab:config} are
\begin{eqnarray*}
b_0 & = -(9\lambda_1^2 \lambda_2^2 - 18\Lambda), \\
b_1 & = 9\lambda_1^3 \lambda_3 - 27w^4 \Lambda, \\
b_2 & = 9\lambda_0 \lambda_2^3 - 27/w^4 \Lambda, \\
b_3 & = 9\lambda_0^2 \lambda_3^2 - \Lambda.
\end{eqnarray*} 
We can divide all coefficients by $w^4$ and take the limit as $w\rightarrow\infty$ to see that the limiting coefficients are
\begin{eqnarray*}
\beta_0 = \lim_{w\rightarrow\infty}b_0/w^4 & = 0, \\
\beta_1 = \lim_{w\rightarrow\infty}b_1/w^4 & = -27\Lambda, \\
\beta_2 = \lim_{w\rightarrow\infty}b_2/w^4 & = 0, \\
\beta_3 = \lim_{w\rightarrow\infty}b_3/w^4 & = 0.
\end{eqnarray*} 
Clearly we can divide through by the non-zero constant $-27\Lambda,$ in order to normalize the coefficients. The other basis functions can be 
treated similarly. 

Such limits are more difficult to treat using the parametric representation since there does not exist a single rational cubic B\'ezier 
representation which traverses the algebraic curves which define the basis functions. This is due to all of the functions being reducible 
products of linear forms. However, we can evaluate the parametric limit at certain parameters in order to generate some points along the basis 
functions. Once again we take $K_1$ as an example. For all $t\neq0,$ we have 
\begin{eqnarray*}
&   & \lim_{w\rightarrow\infty, t\neq0}  \frac{\mathbf{c}_0 \frac{1}{w}(1-t)^3 + \mathbf{c}_1 3 w(1-t)^2 t + \mathbf{c}_2 \frac{3}{w}(1-t)t^2 + \mathbf{c}_3 wt^3}{\frac{1}{w}(1-t)^3 + 3w(1-t)^2 t + \frac{3}{w}(1-t)t^2 + wt^3} = \frac{\mathbf{c}_1 3 (1-t)^2 + \mathbf{c}_3 t^2}{3(1-t)^2 + t^2}. 
\end{eqnarray*}
When $t=0$ we can evaluate the parametric description before taking limits. We then get 
\begin{eqnarray*}
\lim_{w\rightarrow\infty, t=0}  \frac{\mathbf{c}_0 \frac{1}{w}(1-t)^3 + \mathbf{c}_1 3 w(1-t)^2 t + \mathbf{c}_2 \frac{3}{w}(1-t)t^2 + \mathbf{c}_3 wt^3}{\frac{1}{w}(1-t)^3 + 3w(1-t)^2 t + \frac{3}{w}(1-t)t^2 + wt^3} = \mathbf{c}_0.
\end{eqnarray*}
So we see that the points between $\mathbf{c}_1$ and $\mathbf{c}_3$ are traversed with quadratic multiplicity for $t\in[-\infty,\infty]\setminus\{0\}$, while the point $\mathbf{c}_0$ is included at $t=0.$ This justifies the representation given by $K_1.$ Once again, the other basis functions can be treated similarly.

\end{appendix}

\end{document}